\documentclass[11pt]{article}

\usepackage{fullpage}
\usepackage{amsfonts}
\usepackage{graphicx}
\usepackage{amsmath,amssymb,amsthm}
\usepackage[ruled]{algorithm2e}
\usepackage{algorithmic}
\usepackage{epstopdf}
\usepackage{psfrag}

\newcommand{\minimize}{\mathop{\mathrm{minimize}{}}}
\newcommand{\maximize}{\mathop{\mathrm{maximize}{}}}
\newcommand{\dom}{\mathrm{dom}}
\newcommand{\eqdef}{~\stackrel{\mathrm{def}}{=}~}
\newcommand{\barR}{\bar{R}}
\newcommand{\order}{\mathcal{O}}
\newcommand{\field}{\mathcal{F}}
\newcommand{\E}{\mathbb E}
\newcommand{\R}{{\mathbb R}}
\newcommand{\norms}[1]{\|{#1}\|}
\newcommand{\ltwo}[1]{\left\|{#1}\right\|_2}
\newcommand{\ltwos}[1]{\norms{#1}_2}
\newcommand{\lones}[1]{\norms{#1}_1}
\newcommand{\yvec}{\ensuremath{y}}
\newcommand{\uvec}{\ensuremath{u}}
\newcommand{\xhat}{x^\star}
\newcommand{\xbar}{\overline x}
\newcommand{\supt}{{(t)}}
\newcommand{\suptp}{{(t+1)}}
\newcommand{\suptm}{{(t-1)}}
\newcommand{\yhat}{y^\star}
\newcommand{\ytilde}{\widetilde y}
\newcommand{\phistar}{\phi^*}
\newcommand{\uhat}{\ensuremath{\uvec^\star}}
\newcommand{\spf}{f}

\newtheorem{theorem}{Theorem}
\newtheorem{lemma}{Lemma}
\newtheorem{corollary}{Corollary}

\newtheorem{assumption}{Assumption}

\title{\vspace{-1ex}Stochastic Primal-Dual Coordinate Method for Regularized Empirical 
  Risk Minimization\thanks{An extended abstract (9 pages) of an early version of this manuscript (arXiv:1409.3257) appeared in the Proceedings of The 32nd International Conference on Machine Learning (ICML), Lille, France, July 2015.}}

\author{
Yuchen Zhang\thanks{
    Department of Electrical Engineering and Computer Science, 
    University of California, Berkekey, CA 94720, USA.
    Email: \texttt{yuczhang@eecs.berkeley.edu}.
    (This work was performed during an internship at Microsoft Research.)} \and
Lin Xiao\thanks{
    Machine Learning Groups, Microsoft Research, Redmond, WA 98053, USA.
    Email: \texttt{lin.xiao@microsoft.com}.}
}

\date{September, 2015}

\begin{document}
\maketitle

\vspace{-3ex}
\begin{abstract}
    We consider a generic convex optimization problem associated with 
    regularized empirical risk minimization of linear predictors. 
    The problem structure allows us to reformulate it as a convex-concave 
    saddle point problem. 
    We propose a stochastic primal-dual coordinate (SPDC) method,
    which alternates between maximizing over a randomly chosen dual variable
    and minimizing over the primal variable.
    An extrapolation step on the primal variable is performed to obtain 
    accelerated convergence rate.
    We also develop a mini-batch version of the SPDC method which facilitates
    parallel computing, and an extension with weighted sampling probabilities 
    on the dual variables, 
    which has a better complexity than uniform sampling on unnormalized data.
    Both theoretically and empirically, we show that the SPDC method has
    comparable or better performance than several state-of-the-art
    optimization methods.
\end{abstract}

\section{Introduction}

We consider a generic convex optimization problem that arises often in 
machine learning: regularized empirical risk minimization (ERM)
of linear predictors.
More specifically,
let $a_1, \ldots, a_n\in\R^d$ be the feature vectors of~$n$ data samples,
$\phi_i:\R\to\R$ be a convex loss function associated with 
the linear prediction $a_i^T x$, for $i=1,\ldots,n$,
and~$g:\R^d\to\R$ be a convex regularization function 
for the predictor~$x\in\R^d$.
Our goal is to solve the following optimization problem:
\begin{align}\label{eqn:min-primal}
    \minimize_{x\in\R^d} \quad \left\{ P(x) \eqdef \frac{1}{n} \sum_{i=1}^n \phi_i(a_i^{T}x)+ g(x) \right\} .
\end{align}

Examples of the above formulation 
include many well-known classification and regression problems.
For binary classification, 
each feature vector $a_i$ is associated with a label $b_i\in\{\pm 1\}$.
We obtain the linear SVM (support vector machine)
by setting $\phi_i(z)=\max\{0, 1-b_i z\}$ (the hinge loss) 
and $g(x) = (\lambda/2)\|x\|_2^2$,
where $\lambda>0$ is a regularization parameter.
Regularized logistic regression is obtained by setting 
$\phi_i(z)=\log(1+\exp(-b_i z))$. 
For linear regression problems, each feature vector $a_i$ is associated with 
a dependent variable $b_i\in\R$, and $\phi_i(z)=(1/2)(z-b_i)^2$.
Then we get ridge regression with $g(x) = (\lambda/2)\|x\|_2^2$,
and the Lasso with $g(x)=\lambda \|x\|_1$.
Further backgrounds on regularized ERM in machine learning and statistics 
can be found, e.g., in the book \cite{HTFbook}.

We are especially interested in developing efficient algorithms for solving
problem~\eqref{eqn:min-primal} when the number of samples~$n$ is very large. 
In this case, evaluating the full gradient or subgradient of the function $P(x)$
is very expensive, thus incremental methods that operate on a single component 
function $\phi_i$ at each iteration can be very attractive.
There have been extensive research on incremental (sub)gradient methods 
(e.g. \cite{Tseng98incremental,BlattHeroGauchman07,NedicBertsekas2001,
Bertsekas2011,Bertsekas12Survey})
as well as variants of the stochastic gradient method
(e.g., \cite{Zhang04,Bottou2010,DuS:09,LLZ:09,Xiao2010RDA}).
While the computational cost per iteration of these methods is only a small
fraction, say $1/n$, of that of the batch gradient methods,
their iteration complexities are much higher
(it takes many more iterations for them to reach the same precision).
In order to better quantify the complexities of various algorithms and 
position our contributions, we need to make some concrete assumptions
and introduce the notion of condition number and batch complexity.

\subsection{Condition number and batch complexity}
\label{sec:cond-number}

Let $\gamma$ and $\lambda$ be two positive real parameters.
We make the following assumption:
\begin{assumption} \label{asmp:smooth-convex}
Each $\phi_i$ is convex and differentiable, and its derivative is 
$(1/\gamma)$-Lipschitz continuous
(same as $\phi_i$ being $(1/\gamma)$-smooth), i.e., 
\[
    |\phi'_i(\alpha) -\phi'_i(\beta)| \leq (1/\gamma)|\alpha-\beta|,
    \quad \forall\, \alpha,\beta\in\R, \quad i=1,\ldots,n.
\]
In addition, the regularization function~$g$ is $\lambda$-strongly convex, 
i.e., 
\[
    g(x) \geq g(y) + g'(y)^T(x-y) + \frac{\lambda}{2}\|x-y\|_2^2,
    \quad \forall\,g'(y)\in\partial g(y), \quad x, y\in\R^n.
\]
\end{assumption}

For example, the logistic loss $\phi_i(z)=\log(1+\exp(-b_i z))$
is $(1/4)$-smooth, 
the squared error $\phi_i(z)=(1/2)(z-b_i)^2$ is $1$-smooth,
and the squared $\ell_2$-norm $g(x)=(\lambda/2)\|x\|_2^2$
is $\lambda$-strongly convex.
The hinge loss $\phi_i(z)=\max\{0, 1-b_i z\}$ and the $\ell_1$-regularization
$g(x)=\lambda\|x\|_1$ do not satisfy Assumption~\ref{asmp:smooth-convex}.
Nevertheless, 
we can treat them using smoothing and strongly convex perturbations,
respectively, so that our algorithm and theoretical framework still apply
(see Section~\ref{sec:non-smooth}).

Under Assumption~\ref{asmp:smooth-convex}, 
the gradient of each component function, 
$\nabla \phi_i(a_i^T x)$, is also Lipschitz continuous, with Lipschitz constant
$L_i=\|a_i\|_2^2/\gamma\leq R^2/\gamma$, where $R = \max_i \|a_i\|_2$.
In other words, each $\phi_i(a_i^T x)$ is $(R^2/\gamma)$-smooth.
We define a \emph{condition number} 
\begin{equation} \label{eqn:condition-number}
  \kappa=R^2/(\lambda\gamma),
\end{equation}
and focus on ill-conditioned problems where $\kappa\gg 1$.
In the statistical learning context, the regularization parameter~$\lambda$ 
is usually on the order of $1/\sqrt{n}$ or $1/n$ 
(e.g., \cite{BousquetElisseeff02}),
thus $\kappa$ is on the order of $\sqrt{n}$ or~$n$. 
It can be even larger if the strong convexity in~$g$ is added purely for
numerical regularization purposes (see Section~\ref{sec:non-smooth}).
We note that the actual conditioning of problem~\eqref{eqn:min-primal}
may be better than~$\kappa$, if the empirical loss function 
$(1/n)\sum_{i=1}^n \phi_i(a_i^T x)$ by itself is strongly convex.
In those cases, our complexity estimates in terms of~$\kappa$ 
can be loose (upper bounds), but they are still useful in comparing 
different algorithms for solving the same given problem.

Let $P^\star$ be the optimal value of problem~\eqref{eqn:min-primal}, i.e., 
$P^\star=\min_{x\in\R^d} P(x)$.
In order to find an approximate solution~$\hat{x}$
satisfying $P(\hat{x})-P^\star\leq\epsilon$,
the classical full gradient method and its proximal variants 
require $\order((1+\kappa)\log(1/\epsilon))$ iterations 
(e.g., \cite{Nesterov04book,Nesterov13composite}).
Accelerated full gradient (AFG) methods 
\cite{Nesterov04book,Tseng08, BeckTeboulle09,Nesterov13composite} 
enjoy the improved iteration complexity 
$\order((1+\sqrt{\kappa})\log(1/\epsilon))$.\footnote{
    For the analysis of full gradient methods, we should use
    $(R^2/\gamma + \lambda)/\lambda=1+\kappa$ as the condition number of
    problem~\eqref{eqn:min-primal}; 
    see \cite[Section~5.1]{Nesterov13composite}.
    Here we used the upper bound $\sqrt{1+\kappa} < 1 + \sqrt{\kappa}$
    for easy comparison.
    When $\kappa\gg 1$, the additive constant~$1$ can be dropped.
}
However, each iteration of these batch methods requires a full pass over 
the dataset, computing the gradient of each component function
and forming their average, which cost $\order(nd)$ operations
(assuming the features vectors $a_i\in\R^d$ are dense).
In contrast, the stochastic gradient method and its proximal variants 
operate on one single component $\phi_i(a_i^T x)$ (chosen randomly)
at each iteration, which only costs $\order(d)$.
But their iteration complexities are far worse.
Under Assumption~\ref{asmp:smooth-convex}, 
it takes them $\order(\kappa/\epsilon)$ iterations to find
an~$\hat{x}$ such that $\E[P(\hat{x})-P^\star]\leq\epsilon$,
where the expectation is with respect to the random choices 
made at all the iterations
(see, e.g., \cite{PolyakJuditsky92,NemirovskiJLS09,DuS:09,LLZ:09,Xiao2010RDA}).

To make fair comparisons with batch methods, we measure the complexity
of stochastic or incremental gradient methods 
in terms of the number of equivalent passes over the dataset required to reach 
an expected precision~$\epsilon$.
We call this measure the \emph{batch complexity},
which are usually obtained by dividing their iteration complexities by~$n$.
For example, the batch complexity of the stochastic gradient method is
$\order(\kappa/(n\epsilon))$.
The batch complexities of full gradient methods are the same as their
iteration complexities.

By carefully exploiting the finite average structure in~\eqref{eqn:min-primal}
and other similar problems, several recent work 
\cite{LeRouxSchmidtBach12,SSZhang13SDCA,JohnsonZhang13,XiaoZhang14ProxSVRG,
NIPS2014SAGA} 
proposed new variants of the stochastic gradient or
dual coordinate ascent methods 
and obtained the iteration complexity $\order((n+\kappa)\log(1/\epsilon))$.
Since their computational cost per iteration is $\order(d)$, 
the equivalent batch complexity is $1/n$ of their iteration complexity, i.e.,
$\order((1+\kappa/n)\log(1/\epsilon))$.
This complexity has much weaker dependence on~$n$ than the full gradient 
methods, and also much weaker dependence on~$\epsilon$ than the 
stochastic gradient methods.

In this paper, we propose a stochastic primal-dual coordinate (SPDC) method,
which has the iteration complexity
\[
    \order\bigl((n+\sqrt{\kappa n})\log(1/\epsilon)\bigr),
\]
or equivalently, the batch complexity
\begin{equation}\label{eqn:accl-complexity}
    \order\bigl((1+\sqrt{\kappa/n})\log(1/\epsilon)\bigr).
\end{equation}
When $\kappa>n$,
this is lower than the $\order((1+\kappa/n)\log(1/\epsilon))$ batch complexity
mentioned above.
Indeed, it is very close to a lower bound for minimizing finite sums recently
established in \cite{AgarwalBottou15}.

\subsection{Outline of the paper}

Our approach is based on reformulating problem~\eqref{eqn:min-primal} 
as a convex-concave saddle point problem, 
and then devising a primal-dual algorithm to approximate the saddle point.
More specifically, we replace each component function $\phi_i(a_i^Tx)$ 
through convex conjugation, i.e., 
\[
\phi_i(a_i^T x) 
= \sup_{y_i\in\R} \left\{ y_i\langle a_i, x \rangle - \phi_i^*(y_i) \right\},
\]
where $\phi_i^*(y_i) = \sup_{\alpha\in\R}\{\alpha y_i - \phi_i(\alpha)\}$,
and $\langle a_i, x\rangle$ denotes the inner product of $a_i$ and~$x$
(which is the same as $a_i^T x$, but is more convenient for later presentation).
This leads to a convex-concave saddle point problem
\begin{align}\label{eqn:min-max-saddle}
\min_{x\in \R^d} ~\max_{\yvec\in\R^n} 
~\left\{ f(x,y) \eqdef \frac{1}{n}\sum_{i=1}^n 
\bigl( \yvec_i \langle a_i, x \rangle - \phistar_i(\yvec_i) \bigr)
+ g(x) \right\}.
\end{align}
Under Assumption~\ref{asmp:smooth-convex}, 
each $\phistar_i$ is $\gamma$-strongly convex (since $\phi_i$ is 
$(1/\gamma)$-smooth; see, e.g., \cite[Theorem~4.2.2]{HUL01book})
and~$g$ is $\lambda$-strongly convex.
As a consequence, the saddle point problem~\eqref{eqn:min-max-saddle}
has a unique solution, which we denote by $(x^\star, y^\star)$. 

In Section~\ref{sec:spdc-method},
we present the SPDC method as well as its convergence analysis.
It alternates between maximizing~$f$ over a randomly chosen dual 
coordinate $y_i$ and minimizing~$f$ over the primal variable~$x$.
In order to accelerate the convergence,
an extrapolation step is applied in updating the primal variable~$x$.
We also give a mini-batch SPDC algorithm 
which is well suited for parallel computing.

In Section~\ref{sec:non-smooth} and Section~\ref{sec:non-uniform}, 
we present two extensions of the SPDC method.
We first explain how to solve problem~\eqref{eqn:min-primal} 
when Assumption~\ref{asmp:smooth-convex} does not hold.
The idea is to apply small regularizations to the saddle point function
so that SPDC can still be applied, 
which results in accelerated sublinear rates.
The second extension is a SPDC method with non-uniform sampling.
The batch complexity of this algorithm has the same form 
as~\eqref{eqn:accl-complexity}, 
but with $\kappa=\bar{R}/(\lambda\gamma)$,
where $\bar{R}=\frac{1}{n}\sum_{i=1}^n\|a_i\|$,
which can be much smaller than $R=\max_i\|a_i\|$
if there is considerable variation in the norms $\|a_i\|$.

In Section~\ref{sec:related-work}, we discuss related work.
In particular, the SPDC method can be viewed as a coordinate-update 
extension of the batch primal-dual algorithm developed by 
Chambolle and Pock~\cite{ChambollePock11}.
We also discuss two very recent work \cite{SSZhang13acclSDCA,LinLuXiao14apcg}
which achieve the same batch complexity~\eqref{eqn:accl-complexity}.

In Section~\ref{sec:sparse-impl}, we discuss efficient implementation 
of the SPDC method when the feature vectors $a_i$ are sparse.
We focus on two popular cases:
when~$g$ is a squared $\ell_2$-norm penalty
and when~$g$ is an $\ell_1+\ell_2$ penalty.
We show that the computational cost per iteration of SPDC only depends
on the number of non-zero elements in the feature vectors.

In Section~\ref{sec:experiments}, we present experiment results comparing
SPDC with several state-of-the-art optimization methods,
including both batch algorithms and randomized incremental and coordinate
gradient methods.
On all scenarios we tested, SPDC has comparable or better performance.

\section{The SPDC method}
\label{sec:spdc-method}

\begin{algorithm}[t]
\DontPrintSemicolon
\KwIn{parameters $\tau,\sigma,\theta\in \R_+$, 
    number of iterations $T$, and initial points $x^{(0)}$ and $y^{(0)}$.}
\vspace{2pt}
\textbf{Initialize:} 
$\xbar^{(0)} = x^{(0)}$, $\uvec^{(0)} = (1/n)\sum_{i=1}^n y^{(0)}_i a_i$. \;
\vspace{2pt}
\For{$t=0,1,2,\dots,T-1$}
{
\vspace{2pt}
	Pick an index $k\in \{1,2,\dots,n\}$ uniformly at random, and execute the following updates:
	\begin{align}
		\yvec_i^\suptp &= \left\{ \begin{array}{ll}
            \arg\max_{\beta\in \R} \left\{ \beta \langle a_i, \xbar^\supt \rangle - \phistar_{i}(\beta) - \frac{1}{2\sigma}(\beta-\yvec_i^\supt)^2 \right\}
            & \mbox{if}~i = k, \\
            \yvec_i^\supt & \mbox{if}~i\neq k,
		\end{array} \right. \label{eqn:spdc-maximize}\\
        x^\suptp &= \arg\min_{x\in \R^d} \left\{ g(x) + \left\langle \uvec^\supt + (\yvec_k^\suptp - \yvec_k^\supt) a_k, ~x \right\rangle +  \frac{\ltwos{x-x^\supt}^2}{2\tau} \right\},
		\label{eqn:spdc-minimize}\\
        \uvec^\suptp &= \uvec^\supt + \frac{1}{n}(\yvec_k^\suptp - \yvec_k^\supt) a_k, \label{eqn:spdc-update-b}\\
		\xbar^\suptp &= x^\suptp + \theta (x^\suptp - x^\supt). \label{eqn:spdc-update-xbar}
	\end{align}
	\vspace{-20pt}
}
\KwOut{$x^{(T)}$ and $\yvec^{(T)}$}
\caption{The SPDC method}
\label{alg:fast-spdc}
\end{algorithm}

\begin{algorithm}[t]
\DontPrintSemicolon
\KwIn{mini-batch size~$m$, parameters $\tau,\sigma,\theta\in \R_+$,
    number of iterations $T$, and $x^{(0)}$ and $y^{(0)}$.}
\vspace{2pt}
\textbf{Initialize:} 
$\xbar^{(0)} = x^{(0)}$, $\uvec^{(0)} = (1/n)\sum_{i=1}^n y^{(0)}_i a_i$. \;
\vspace{2pt}
\For{$t=0,1,2,\dots,T-1$}
{
\vspace{2pt}
	Randomly pick a subset of indices $K\subset \{1,2,\dots,n\}$ of size $m$, 
    such that the probability of each index being picked is equal to $m/n$.
	Execute the following updates:
	\begin{align}
		\yvec_i^\suptp &= \left\{ \begin{array}{ll}
            \arg\max_{\beta\in \R} \left\{\beta \langle a_i, \xbar^\supt \rangle - \phistar_{i}(\beta) - \frac{1}{2\sigma}(\beta-\yvec_i^\supt)^2 \right\}
            & \mbox{if}~i \in K,\\
            \yvec_i^\supt & \mbox{if}~i\notin K,
		\end{array} \right. \label{eqn:spdc-minibatch-maximize}\\
		\uvec^\suptp &= \uvec^\supt + \frac{1}{n}\sum_{k\in K} (\yvec_k^\suptp - \yvec_k^\supt) a_k, \nonumber\\
        x^\suptp &= \arg\min_{x\in \R^d} \left\{ g(x) + \left\langle \uvec^\supt + \frac{n}{m}(u^\suptp-u^\supt), ~x \right\rangle +  \frac{\ltwos{x-x^\supt}^2}{2\tau} \right\},
		\label{eqn:spdc-minibatch-minimize}\\
		\xbar^\suptp &= x^\suptp + \theta (x^\suptp - x^\supt).\nonumber
	\end{align}
	\vspace{-20pt}
}
\KwOut{$x^{(T)}$ and $\yvec^{(T)}$}
\caption{The Mini-Batch SPDC method}
\label{alg:spdc-minibatch}
\end{algorithm}

In this section, we describe and analyze the 
Stochastic Primal-Dual Coordinate~(SPDC) method.
The basic idea of SPDC is quite simple:
to approach the saddle point of~$f(x,y)$ defined in~\eqref{eqn:min-max-saddle},
we alternatively maximize~$f$ with respect to~$\yvec$, 
and minimize~$f$ with respect to~$x$. 
Since the dual vector~$\yvec$ has~$n$ coordinates and each coordinate is 
associated with a feature vector $a_i\in\R^d$,
maximizing~$f$ with respect to~$\yvec$ takes $\order(n d)$ computation,
which can be very expensive if~$n$ is large. 
We reduce the computational cost by randomly picking a single 
coordinate of~$\yvec$ at a time, and maximizing~$f$ only with respect to 
this coordinate. 
Consequently, the computational cost of each iteration is $\order(d)$.

We give the details of the SPDC method in Algorithm~\ref{alg:fast-spdc}.
The dual coordinate update and primal vector update are given in
equations~\eqref{eqn:spdc-maximize} and~\eqref{eqn:spdc-minimize}
respectively.
Instead of maximizing~$f$ over~$y_k$ and minimizing~$f$ over~$x$ directly, 
we add two quadratic regularization terms to penalize 
$y^\suptp_k$ and $x^\suptp$ from deviating from $y^\supt_k$ and $x^\supt$.
The parameters~$\sigma$ and~$\tau$ control their regularization strength,
which we will specify in the convergence analysis 
(Theorem~\ref{thm:spdc-convergence}).
Moreover, we introduce two auxiliary variables~$\uvec^\supt$ and~$\xbar^\supt$. 
From the initialization $u^{(0)}=(1/n)\sum_{i=1}^n y^{(0)}_i a_i$
and the update rules~\eqref{eqn:spdc-maximize}
and~\eqref{eqn:spdc-update-b}, we have
\[
    u^{(t)} = \frac{1}{n}\sum_{i=1}^n y^{(t)}_i a_i, \qquad t=0,\ldots, T.
\]
Equation~\eqref{eqn:spdc-update-xbar} obtains
$\xbar^\suptp$ based on extrapolation from $x^\supt$ and $x^\suptp$. 
This step is similar to Nesterov's acceleration 
technique~\cite[Section~2.2]{Nesterov04book}, 
and yields faster convergence rate. 

The Mini-Batch SPDC method in Algorithm~\ref{alg:spdc-minibatch}
is a natural extension of SPDC in Algorithm~\ref{alg:fast-spdc}.
The difference between these two algorithms is that, the Mini-Batch SPDC method
may simultaneously select more than one dual coordinates to update.
Let~$m$ be the mini-batch size.
During each iteration, the Mini-Batch SPDC method randomly picks a subset of
indices $K\subset\{1,\ldots,n\}$ of size~$m$, 
such that the probability of each index being picked is equal to $m/n$.
The following is a simple procedure to achieve this.
First, partition the set of indices into~$m$ disjoint subsets, so that the 
cardinality of each subset is equal to $n/m$ (assuming~$m$ divides~$n$). 
Then, during each iteration, 
randomly select a single index from each subset and add it to~$K$. 
Other approaches for mini-batch selection are also possible;
see the discussions in \cite{RichtarikTakac12bigdata}.

In Algorithm~\ref{alg:spdc-minibatch}, 
we also switched the order of updating $x^\suptp$ and $u^\suptp$
(comparing with Algorithm~\ref{alg:fast-spdc}),
to better illustrate that $x^\suptp$ is obtained based on an extrapolation
from $u^\supt$ to $u^\suptp$.
However, this form is not recommended in implementation,
because $u^\supt$ is usually a dense vector even if
the feature vectors $a_k$ are sparse.
Details on efficient implementation of SPDC are given in 
Section~\ref{sec:sparse-impl}.
In the following discussion, we do not make sparseness assumptions.

With a single processor, each iteration of Algorithm~\ref{alg:spdc-minibatch} 
takes $\order(m d)$ time to accomplish. 
Since the updates of each coordinate $\yvec_k$ are independent of each other, 
we can use parallel computing to accelerate the Mini-Batch SPDC method. 
Concretely, we can use~$m$ processors to update the~$m$ coordinates
in the subset~$K$ in parallel, then aggregate them to update $x^\suptp$. 
In terms of wall-clock time, each iteration takes $\order(d)$ time, 
which is the same as running one iteration of the basic SPDC algorithm. 
Not surprisingly, we will show that the Mini-Batch SPDC algorithm converges 
faster than SPDC in terms of the iteration complexity,
because it processes multiple dual coordinates in a single iteration.

\subsection{Convergence analysis}
\label{sec:saddle-approx-analysis}

Since the basic SPDC algorithm is a special case of  Mini-Batch SPDC 
with $m=1$, 
we only present a convergence theorem for the mini-batch version.
The expectations in the following results are taken with respect to 
the random variables $\{ K^{(0)},\ldots,K^{(T-1)} \}$, where
$K^{(t)}$ denotes the random subset $K\subset\{1,\ldots,n\}$ 
picked at the $t$-th iteration of the SPDC method.

\begin{theorem}\label{thm:spdc-convergence}
Assume that each $\phi_i$ is $(1/\gamma)$-smooth and 
$g$ is $\lambda$-strongly convex (Assumption~\ref{asmp:smooth-convex}).
Let $(x^\star, y^\star)$ be the unique saddle point of~$f$ 
defined in~\eqref{eqn:min-max-saddle},
$R = \max \{\ltwo{a_1},\ldots,\ltwo{a_n}\}$, and define
\begin{align}
\Delta^{(t)} 
=& \left( \frac{1}{2\tau} + \frac{\lambda}{2}\right)\ltwos{x^{(t)} - \xhat}^2 + \left( \frac{1}{4\sigma} + \frac{\gamma}{2}\right)\frac{\ltwos{\yvec^{(t)} - \yhat}^2}{m} \nonumber\\
& + \spf(x^{(t)}, y^\star)-\spf(x^\star,y^\star) + \frac{n}{m}\left( \spf(x^\star,y^\star) - \spf(x^\star, y^{(t)})\right) .
\label{eqn:Delta-def}
\end{align}
If the parameters $\tau,\sigma$ and~$\theta$ in 
Algorithm~\ref{alg:spdc-minibatch} are chosen such that
\begin{equation}\label{eqn:tau-sigma-theta}
	\tau = \frac{1}{R} \sqrt{\frac{m\gamma}{n\lambda}},\qquad 
    \sigma = \frac{1}{R} \sqrt{\frac{n\lambda}{m\gamma}}, \qquad
    \theta = 1 - \left(\frac{n}{m} + R\sqrt{\frac{n}{m\lambda\gamma}}\right)^{-1},
\end{equation}
then for each $t\geq 1$, the Mini-Batch SPDC algorithm achieves
\begin{align*}
  \E[ \Delta^{(t)}] \,\leq\, \theta^{t} \left(\Delta^{(0)} +\frac{\ltwos{y^{(0)}-y^\star}^2}{4m\sigma}\right) .
\end{align*}
\end{theorem}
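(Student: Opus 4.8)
The plan is to follow the primal--dual analysis framework of Chambolle and Pock~\cite{ChambollePock11}, adapted to the randomized coordinate setting, by constructing a one-step inequality for the potential $\Delta^{(t)}$ and then telescoping. The two ingredients are the optimality conditions of the two regularized subproblems \eqref{eqn:spdc-minibatch-maximize} and \eqref{eqn:spdc-minibatch-minimize}. Because $\phistar_i$ is $\gamma$-strongly convex, the objective maximized in the dual step is $(\gamma+1/\sigma)$-strongly concave, so comparing its value at $\yvec_k^\suptp$ against $\yhat_k$ yields, for each $k\in K$, an inequality that upper-bounds $\phistar_k(\yvec_k^\suptp)-\phistar_k(\yhat_k)$ in terms of the bilinear term $(\yvec_k^\suptp-\yhat_k)\langle a_k,\xbar^\supt\rangle$ and quadratic distances with coefficients $1/(2\sigma)$ and $(\gamma+1/\sigma)/2$. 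Symmetrically, since $g$ is $\lambda$-strongly convex, the primal objective is $(\lambda+1/\tau)$-strongly convex, and comparing its value at $x^\suptp$ against $\xhat$ bounds $g(x^\suptp)-g(\xhat)$ in terms of $\langle \uvec^\supt+\frac{n}{m}(\uvec^\suptp-\uvec^\supt),\,\xhat-x^\suptp\rangle$ and quadratic distances with coefficients $1/(2\tau)$ and $(\lambda+1/\tau)/2$.

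First I would add these two inequalities and reorganize them into the two saddle-gap quantities that appear in $\Delta^{(t)}$, namely $\spf(x^\suptp,\yhat)-\spf(\xhat,\yhat)$ (primal suboptimality, nonnegative by the saddle property) and $\spf(\xhat,\yhat)-\spf(\xhat,\yvec^\suptp)$ (dual suboptimality, nonnegative). The key bookkeeping step is that only the $m$ coordinates in $K$ are updated, so the change in the dual gap and in $\ltwos{\yvec^\supt-\yhat}^2$ involves only $\sum_{k\in K}$; taking the conditional expectation over the random subset $K^{(t)}$, where each index appears with probability $m/n$, replaces $\sum_{k\in K}$ by $\frac{m}{n}\sum_{i=1}^n$, which is exactly why the factor $n/m$ multiplies the dual gap and the dual distance in the definition \eqref{eqn:Delta-def}. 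I would likewise introduce the ``full'' dual update (the value each coordinate would take if it were selected), so that the expected primal search direction $\frac{n}{m}(\uvec^\suptp-\uvec^\supt)=\frac1m\sum_{k\in K}(\yvec_k^\suptp-\yvec_k^\supt)a_k$ reduces in expectation to the correct gradient of the bilinear form.

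The heart of the argument is cancelling the bilinear coupling terms using the extrapolation $\xbar^\supt=x^\supt+\theta(x^\supt-x^\suptm)$. I would expand $\langle a_k,\xbar^\supt\rangle=\langle a_k,x^\supt\rangle+\theta\langle a_k,x^\supt-x^\suptm\rangle$ and pair the resulting inner products with the corresponding primal-side term $\langle \uhat-\uvec^\supt-\frac{n}{m}(\uvec^\suptp-\uvec^\supt),\,x^\suptp-\xhat\rangle$, so that after summation the coupling contributions form a telescoping sequence in $t$ with ratio $\theta$, leaving only endpoint terms and a residual quadratic in $\ltwos{x^\supt-x^\suptm}$ that is absorbed by the $\frac{1}{2\tau}\ltwos{x^\suptp-x^\supt}^2$ term from the primal step. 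Here the step-size identity $\tau\sigma R^2=1$, which holds for the choice \eqref{eqn:tau-sigma-theta}, together with $\ltwo{a_k}\le R$, is precisely what makes the absorption go through. Splitting the dual quadratic $\frac{1}{2\sigma}$ into two halves --- one feeding the $\frac{1}{4\sigma}$ coefficient in $\Delta^{(t)}$ and one reserved to dominate the cross term --- is what produces the extra $\frac{\ltwos{y^{(0)}-\yhat}^2}{4m\sigma}$ in the final bound, as the $t=0$ value of an auxiliary nonnegative quantity (note $\xbar^{(0)}=x^{(0)}$ trivializes the $t=-1$ boundary).

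Assembling these pieces, I expect a recursion of the form $\E[\Delta^\suptp+\Xi^\suptp]\le\theta\,(\Delta^{(t)}+\Xi^{(t)})$ for a nonnegative auxiliary term $\Xi^{(t)}$ that vanishes for $t\ge 1$ and equals $\frac{\ltwos{y^{(0)}-\yhat}^2}{4m\sigma}$ at $t=0$. The main obstacle will be the coefficient matching: one must verify that the specific $\theta$ in \eqref{eqn:tau-sigma-theta} makes the strong-convexity gains $\frac{\lambda+1/\tau}{2}\ltwos{x^\suptp-\xhat}^2$ and $\frac{\gamma+1/\sigma}{2m}\ltwos{\yvec^\suptp-\yhat}^2$, when weighed against the $\theta$-discounted potential, dominate every quadratic residual left over from the telescoping and from the expectation over $K$; this is where the interplay among $\tau$, $\sigma$ and $\theta$ is delicate. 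Telescoping the recursion from $0$ to $t$ and discarding the nonnegative $\Xi^{(t)}$ for $t\ge 1$ then yields the stated bound.
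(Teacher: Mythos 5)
Your plan follows essentially the same route as the paper's proof in Appendix~\ref{sec:thm1-proof}: strong convexity/concavity of the two proximal subproblems, conditional expectation over the random subset $K$ to turn $\sum_{k\in K}$ into $\frac{m}{n}\sum_{i=1}^n$ (the source of the $n/m$ weights), expansion of the extrapolated coupling term, Young's inequality with $\ltwo{a_k}\le R$ to absorb the residuals, coefficient matching against $\theta$, and a recursion for an augmented potential whose extra (nonnegative) part reduces to $\frac{\ltwos{y^{(0)}-y^\star}^2}{4m\sigma}$ at $t=0$ via $x^{(-1)}=x^{(0)}$. One caution: the absorption step in the paper actually invokes $\tau\sigma R^2=1/4$ (not $1$ as you state), so when you carry out the ``delicate coefficient matching'' you will find the stated step sizes leave no slack unless the constants are tightened.
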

\vspace{5pt}

The proof of Theorem~\ref{thm:spdc-convergence} is given in 
Appendix~\ref{sec:thm1-proof}.
The following corollary establishes the expected iteration complexity 
of Mini-Batch SPDC for obtaining an $\epsilon$-accurate solution.

\begin{corollary}\label{coro:spdc-complexity}
Suppose Assumption~\ref{asmp:smooth-convex} holds
and the parameters $\tau$, $\sigma$ and $\theta$ are set as 
in~\eqref{eqn:tau-sigma-theta}. 
In order for Algorithm~\ref{alg:spdc-minibatch} to obtain
\begin{equation}\label{eqn:saddle-epsilon}
    \E[\|x^{(T)}-\xhat\|_2^2] \leq\epsilon, \qquad
    \E[\|y^{(T)}-\yhat\|_2^2] \leq\epsilon, 
\end{equation}
it suffices to have the number of iterations~$T$ satisfy
    \[
        T \geq \bigg(\frac{n}{m}+R\sqrt{\frac{n}{m\lambda\gamma}}\bigg)\log\left(\frac{C}{\epsilon}\right),
    \]
    where
    \[
      C = \frac{\Delta^{(0)}+\ltwo{y^\supt-y^\star}^2/(4m\sigma)}{\min\bigl\{ 1/(2\tau)+\lambda/2,~ (1/(4\sigma) + \gamma/2)/m \bigr\}}.
    \]
\end{corollary}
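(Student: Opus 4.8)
The plan is to read off the corollary from the geometric decay bound in Theorem~\ref{thm:spdc-convergence}, the only real work being to show that $\Delta^{(t)}$ dominates both squared distances $\ltwos{x^\supt-\xhat}^2$ and $\ltwos{\yvec^\supt-\yhat}^2$. First I would argue that the last two terms in the definition~\eqref{eqn:Delta-def} of $\Delta^{(t)}$ are nonnegative for \emph{every} realization of the random subsets. Since $(\xhat,\yhat)$ is the saddle point of $\spf$, the two minimax inequalities $\spf(x,\yhat)\ge\spf(\xhat,\yhat)$ for all $x$ and $\spf(\xhat,\yhat)\ge\spf(\xhat,y)$ for all $y$ hold; evaluating them at $x=x^\supt$ and $y=\yvec^\supt$ gives $\spf(x^\supt,\yhat)-\spf(\xhat,\yhat)\ge 0$ and $\frac{n}{m}\bigl(\spf(\xhat,\yhat)-\spf(\xhat,\yvec^\supt)\bigr)\ge 0$.

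Discarding these nonnegative gap terms, together with whichever of the two quadratic terms I do not currently need, yields the two pathwise lower bounds $\Delta^{(t)}\ge\bigl(\frac{1}{2\tau}+\frac{\lambda}{2}\bigr)\ltwos{x^\supt-\xhat}^2$ and $\Delta^{(t)}\ge\bigl(\frac{1}{4\sigma}+\frac{\gamma}{2}\bigr)\frac{1}{m}\ltwos{\yvec^\supt-\yhat}^2$. Writing $M$ for the minimum of the two leading coefficients, which is exactly the denominator appearing in the definition of $C$, both squared distances are bounded above by $\Delta^{(t)}/M$ pathwise. Taking expectations and invoking Theorem~\ref{thm:spdc-convergence} at $t=T$ then gives, with $C=\bigl(\Delta^{(0)}+\ltwos{\yvec^{(0)}-\yhat}^2/(4m\sigma)\bigr)/M$,
\[
    \E[\ltwos{x^{(T)}-\xhat}^2] \;\le\; \frac{\E[\Delta^{(T)}]}{M} \;\le\; \theta^{T} C,
    \qquad
    \E[\ltwos{\yvec^{(T)}-\yhat}^2] \;\le\; \theta^{T} C .
\]

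It remains to pick $T$ so that $\theta^{T}C\le\epsilon$, i.e.\ so that $T\log(1/\theta)\ge\log(C/\epsilon)$. The last step is to replace the awkward $1/\log(1/\theta)$ by the clean expression in the statement. Writing $\rho=\bigl(\frac{n}{m}+R\sqrt{\frac{n}{m\lambda\gamma}}\bigr)^{-1}\in(0,1)$ so that $\theta=1-\rho$, the elementary inequality $\log(1/\theta)=-\log(1-\rho)\ge\rho$ (verified by noting the difference vanishes at $\rho=0$ and has nonnegative derivative $\rho/(1-\rho)$) gives $1/\log(1/\theta)\le 1/\rho=\frac{n}{m}+R\sqrt{\frac{n}{m\lambda\gamma}}$. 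Hence any $T\ge\bigl(\frac{n}{m}+R\sqrt{\frac{n}{m\lambda\gamma}}\bigr)\log(C/\epsilon)$ satisfies $T\log(1/\theta)\ge\log(C/\epsilon)$, so both expected squared distances fall below $\epsilon$, which is~\eqref{eqn:saddle-epsilon}.

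I expect the only substantive step to be the nonnegativity of the two gap terms via the saddle-point property; everything after that is a one-line bound followed by the standard $-\log(1-\rho)\ge\rho$ estimate. The point worth stating carefully is that the nonnegativity, and hence the bound $\ltwos{x^\supt-\xhat}^2\le\Delta^{(t)}/M$, holds for each realization of the random subsets, so it survives taking expectations without any independence or conditioning subtlety.
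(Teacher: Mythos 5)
Your proposal is correct and follows essentially the same route as the paper: bound both squared distances by $\Delta^{(t)}/M$ with $M$ the minimum of the two leading coefficients, apply Theorem~\ref{thm:spdc-convergence}, and convert $\theta^T C\le\epsilon$ into the stated iteration count via $-\log(1-x)\ge x$. The paper's proof simply asserts the bounds $\E[\|x^{(t)}-\xhat\|_2^2]\le\theta^t C$ and $\E[\|y^{(t)}-\yhat\|_2^2]\le\theta^t C$ without spelling out the pathwise nonnegativity of the two gap terms, which you correctly justify from the saddle-point property.
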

\begin{proof}
  By Theorem~\ref{thm:spdc-convergence}, for each $t>0$, we have
    $\E[\|x^{(t)}-\xhat\|_2^2] \leq \theta^t C$ and
    $\E[\|y^{(t)}-\yhat\|_2^2] \leq \theta^t C$.
    To obtain~\eqref{eqn:saddle-epsilon},
    it suffices to ensure that $\theta^T C\leq \epsilon$, which is equivalent to
    \[
        T ~\geq~ \frac{\log(C/\epsilon)}{-\log(\theta)} 
        ~=~ \frac{\log(C/\epsilon)}{-\log\Bigl(1-\Bigl((n/m)+R\sqrt{(n/m)/(\lambda\gamma)}\Bigr)^{-1}\Bigr)} .
    \]
    Applying the inequality $-\log(1-x)\geq x$ to the denominator above 
    completes the proof.
\end{proof}

Recall the definition of the condition number $\kappa=R^2/(\lambda\gamma)$
in~\eqref{eqn:condition-number}.
Corollary~\ref{coro:spdc-complexity} establishes that the iteration complexity
of the Mini-Batch SPDC method for achieving~\eqref{eqn:saddle-epsilon} is
\[
\order\left(\bigl( (n/m) +\sqrt{\kappa(n/m)}\bigr)\log(1/\epsilon) \right).
\]
So a larger batch size~$m$ leads to less number of iterations.
In the extreme case of $n=m$, we obtain a full batch algorithm, which has
iteration or batch complexity $\order((1+\sqrt{\kappa})\log(1/\epsilon))$.
This complexity is also shared by the AFG methods
\cite{Nesterov04book,Nesterov13composite} (see Section~\ref{sec:cond-number}),
as well as the batch primal-dual algorithm of Chambolle and Pock
\cite{ChambollePock11} 
(see discussions on related work in Section~\ref{sec:related-work}).

Since an equivalent pass over the dataset corresponds to $n/m$ iterations,
the batch complexity (the number of equivalent passes over the data) 
of Mini-Batch SPDC is
\[
    \order\left(\big(1+\sqrt{\kappa(m/n)}\big)\log(1/\epsilon) \right).
\]
The above expression implies that a smaller batch size~$m$ leads to less 
number of passes through the data.
In this sense, the basic SPDC method with $m=1$ is the most efficient one.
However, if we prefer the least amount of wall-clock time, then the best choice
is to choose a mini-batch size~$m$ that matches 
the number of parallel processors available.

\subsection{Convergence rate of primal-dual gap}
In the previous subsection, we established iteration complexity of the 
Mini-Batch SPDC method in terms of approximating the saddle point
of the minimax problem~\eqref{eqn:min-max-saddle}, 
more specifically, to meet the requirement in~\eqref{eqn:saddle-epsilon}.
Next we show that it has the same order of complexity in 
reducing the primal-dual objective gap $P(x^\supt) - D(y^\supt)$, where
$P(x)$ is defined in~\eqref{eqn:min-primal} and
\begin{equation}\label{eqn:dual-function}
  D(y) \eqdef \min_{x\in\R^d} f(x, y)
    = \frac{1}{n}\sum_{i=1}^n -\phi_i^*(y_i)
    - g^*\biggl(-\frac{1}{n} \sum_{i=1}^n y_i a_i\biggr) .
\end{equation}
where $g^*(u)=\sup_{x\in\R^d}\{x^T u-g(x)\}$ is the conjugate function of~$g$.

Under Assumption~\ref{asmp:smooth-convex}, the function $\spf(x,y)$
defined in~\eqref{eqn:min-max-saddle}
has a unique saddle point $(x^\star, y^\star)$, and 
\[
  P(x^\star) = \spf(x^\star,y^\star)=D(y^\star).
\]
However, in general, for any point $(x,y)\in\dom(g)\times\dom(\phi^*)$, we have
\[
  P(x) = \max_y \spf(x, y) \geq \spf(x,y^\star), \qquad
  D(y) = \min_x \spf(x, y) \leq \spf(x^\star,y) .
\]
Thus the result in Theorem~\ref{thm:spdc-convergence} does not translate
directly into a convergence bound on the primal-dual gap.
We need to bound $P(x)$ and $D(y)$ by $f(x,y^\star)$ and $f(x^\star,y)$,
respectively, in the opposite directions.
For this purpose, we need the following lemma, 
which we extracted from \cite{YuLinYang15}.
We provide the proof in Appendix~\ref{sec:proof-gap-by-saddle} for completeness.

\begin{lemma}[\cite{YuLinYang15}]
  \label{lem:gap-by-saddle}
  Suppose Assumption~\ref{asmp:smooth-convex} holds.
  Let $(x^\star, y^\star)$ is the unique saddle-point of $\spf(x,y)$,
  and $R=\max_{1\leq i\leq n} \ltwos{a_i}$.
  Then for any point $(x,y)\in\dom(g)\times\dom(\phi^*)$, we have
\begin{align*}
  P(x) \leq \spf(x, y^\star) + \frac{R^2}{2\gamma}\ltwos{x-x^\star}^2, \qquad
  D(y) \geq \spf(x^\star, y) - \frac{R^2}{2\lambda n}\ltwos{y^t-y^\star}^2 .
\end{align*}
\end{lemma}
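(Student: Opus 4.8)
The plan is to prove the two inequalities in parallel, since each reduces the relevant Fenchel--Young gap to the increment of a convex \emph{smooth} function evaluated away from its own minimizer, where the smoothness (descent-lemma) upper bound applies. Throughout I will use the first-order saddle conditions of $(\xhat,\yhat)$ together with the companion Fenchel--Young equalities they induce.

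For the primal bound, I would start from the definitions of $P$ and $\spf$ and cancel the common $g(x)$ term to get
\[
  P(x) - \spf(x,\yhat) = \frac{1}{n}\sum_{i=1}^n \bigl[ \phi_i(a_i^T x) - \yhat_i\, a_i^T x + \phistar_i(\yhat_i) \bigr].
\]
The key observation is that the $i$-th summand equals $h_i(a_i^T x) - h_i(a_i^T \xhat)$, where $h_i(\alpha)\eqdef\phi_i(\alpha)-\yhat_i\alpha$. Indeed, the first-order condition of the inner maximization over $y_i$ gives $\yhat_i=\phi_i'(a_i^T\xhat)$, so $h_i'(a_i^T\xhat)=0$ and the convex function $h_i$ is minimized at $a_i^T\xhat$; the same condition is exactly the Fenchel--Young equality $\phistar_i(\yhat_i)=\yhat_i\,a_i^T\xhat-\phi_i(a_i^T\xhat)=-h_i(a_i^T\xhat)$, which rewrites the summand as claimed. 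Since each $h_i$ inherits $(1/\gamma)$-smoothness from $\phi_i$ and attains its minimum at $a_i^T\xhat$, the smoothness upper bound gives $h_i(a_i^T x)-h_i(a_i^T\xhat)\leq \frac{1}{2\gamma}(a_i^T(x-\xhat))^2$. Averaging over $i$ and using Cauchy--Schwarz with $\ltwo{a_i}\leq R$ to bound $\frac{1}{n}\sum_i (a_i^T(x-\xhat))^2\leq R^2\ltwos{x-\xhat}^2$ yields the primal inequality.

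For the dual bound I would run the mirror-image argument in the variable $u\eqdef -\frac{1}{n}\sum_i y_i a_i$. Using the closed form of $D$ in~\eqref{eqn:dual-function} and canceling the $-\phistar_i(y_i)$ terms, the gap collapses to $\spf(\xhat,y)-D(y)=g(\xhat)+g^*(u)-\langle u,\xhat\rangle$. Set $p(v)\eqdef g^*(v)-\langle v,\xhat\rangle$ and $u^\star\eqdef -\frac{1}{n}\sum_i\yhat_i a_i$. The first-order condition of the outer minimization over $x$ gives $u^\star\in\partial g(\xhat)$, hence $\xhat=\nabla g^*(u^\star)$ and $\nabla p(u^\star)=0$, so $u^\star$ minimizes $p$; the companion Fenchel--Young equality gives $g(\xhat)=-p(u^\star)$, so the gap equals $p(u)-p(u^\star)$. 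Because $g$ is $\lambda$-strongly convex, $g^*$ and hence $p$ is $(1/\lambda)$-smooth, so $p(u)-p(u^\star)\leq\frac{1}{2\lambda}\ltwos{u-u^\star}^2$. Finally $u-u^\star=-\frac{1}{n}\sum_i(y_i-\yhat_i)a_i$, and the elementary estimate $\ltwos{\sum_i c_i a_i}\leq\sum_i|c_i|\,\ltwo{a_i}\leq R\,\lones{c}\leq R\sqrt{n}\,\ltwos{c}$ (with $c=y-\yhat$) gives $\ltwos{u-u^\star}^2\leq\frac{R^2}{n}\ltwos{y-\yhat}^2$, which is the dual inequality.

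The conceptually important point---and the only place where care is needed---is getting the \emph{direction} of the inequalities right: the trivial Fenchel--Young inequality only bounds these two gaps below by zero, which is useless here, so the proof must instead exploit smoothness (equivalently, that each gap is the increment of a smooth convex function at a point displaced from its minimizer) to obtain the matching \emph{upper} bounds. Everything else is the routine pairing of a first-order saddle condition with its Fenchel--Young equality, followed by the two elementary $R^2$ norm estimates; I do not expect any further obstacle.
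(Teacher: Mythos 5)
Your proof is correct and follows essentially the same route as the paper's: both arguments bound each gap by a smoothness (descent-lemma) estimate anchored at the saddle point, where the first-order conditions make the relevant smooth function stationary. The only difference is bookkeeping — you apply smoothness coordinate-wise to the scalar functions $h_i(\alpha)=\phi_i(\alpha)-y_i^\star\alpha$ and to $g^*$ in the aggregated variable $u$, whereas the paper applies it directly to $F(x)=\frac{1}{n}\sum_i\phi_i(a_i^Tx)$ and $G^*(y)=g^*(-\frac{1}{n}A^Ty)$ with the Lipschitz constants controlled via $\|A\|_2^2\leq\|A\|_F^2\leq nR^2$; both yield identical constants.
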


\begin{corollary}\label{coro:gap-complexity}
Suppose Assumption~\ref{asmp:smooth-convex} holds
and the parameters $\tau$, $\sigma$ and $\theta$ are set as 
in~\eqref{eqn:tau-sigma-theta}. Let $\widetilde \Delta^{(0)}
:= \Delta^{(0)} +\frac{\ltwos{y^{(0)}-y^\star}^2}{4m\sigma}$.
Then for any $\epsilon\geq 0$, the iterates of
Algorithm~\ref{alg:spdc-minibatch} satisfy
\[
  \E [ P(x^{(T)}) - D(y^{(T)}) ] \leq \epsilon
\]
whenever
\[
  T \geq \bigg(\frac{n}{m}+R\sqrt{\frac{n}{m\lambda\gamma}}\bigg)
  \log\left(\left(1+\frac{R^2}{\lambda\gamma}\right)\frac{\widetilde \Delta^{(0)}}{\epsilon}\right) .
\]
\end{corollary}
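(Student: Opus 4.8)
The plan is to reduce the primal-dual gap $P(x^{(T)}) - D(y^{(T)})$ to a constant multiple of the quantity $\Delta^{(T)}$ that is already controlled by Theorem~\ref{thm:spdc-convergence}, and then solve for $T$ in exactly the same way as in the proof of Corollary~\ref{coro:spdc-complexity}. The whole argument is a deterministic (pointwise) bound in terms of the iterates, followed by one application of the expectation bound from Theorem~\ref{thm:spdc-convergence}.

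First I would apply Lemma~\ref{lem:gap-by-saddle} with $(x,y) = (x^{(T)}, y^{(T)})$, which gives
\[
P(x^{(T)}) - D(y^{(T)}) \leq \bigl(f(x^{(T)}, y^\star) - f(x^\star, y^{(T)})\bigr) + \frac{R^2}{2\gamma}\ltwos{x^{(T)} - x^\star}^2 + \frac{R^2}{2\lambda n}\ltwos{y^{(T)} - y^\star}^2 .
\]
Next I would split the saddle term as $f(x^{(T)}, y^\star) - f(x^\star, y^{(T)}) = \bigl(f(x^{(T)}, y^\star) - f(x^\star, y^\star)\bigr) + \bigl(f(x^\star, y^\star) - f(x^\star, y^{(T)})\bigr)$. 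By the saddle-point property of $(x^\star, y^\star)$, both parenthesized terms are nonnegative; since $n/m \geq 1$, replacing the coefficient of the second one by $n/m$ only increases the expression. These two terms are then exactly the last two terms in the definition of $\Delta^{(T)}$ in~\eqref{eqn:Delta-def}.

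The key step is the comparison of the squared-norm terms. I would substitute the parameter choices~\eqref{eqn:tau-sigma-theta} and write $\kappa = R^2/(\lambda\gamma)$ to verify
\[
\frac{R^2/(2\gamma)}{1/(2\tau) + \lambda/2} = \frac{\kappa}{\sqrt{\kappa n/m}+1} \leq \kappa, \qquad \frac{R^2/(2\lambda n)}{(1/(4\sigma)+\gamma/2)/m} = \frac{2\kappa m/n}{\sqrt{\kappa m/n}+2} \leq \kappa ,
\]
where the last inequality uses $m \leq n$. Since $1 + R^2/(\lambda\gamma) = 1 + \kappa \geq 1$ dominates both ratios and also the unit coefficients on the two saddle terms, combining the displays yields the pointwise bound $P(x^{(T)}) - D(y^{(T)}) \leq \bigl(1 + R^2/(\lambda\gamma)\bigr)\,\Delta^{(T)}$. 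I expect this to be the main obstacle: it is the only place where the specific values of $\tau$ and $\sigma$ enter, and the algebra with $\kappa$ is precisely what produces the factor $1 + R^2/(\lambda\gamma)$ inside the logarithm.

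Finally, taking expectations and invoking Theorem~\ref{thm:spdc-convergence} gives $\E[P(x^{(T)}) - D(y^{(T)})] \leq \bigl(1 + R^2/(\lambda\gamma)\bigr)\,\theta^T\,\widetilde\Delta^{(0)}$. Requiring the right-hand side to be at most $\epsilon$ and applying $-\log(1-x) \geq x$ with $x = \bigl(n/m + R\sqrt{n/(m\lambda\gamma)}\bigr)^{-1}$, exactly as in the proof of Corollary~\ref{coro:spdc-complexity}, yields the stated sufficient condition on $T$.
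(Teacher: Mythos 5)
Your proposal is correct and follows the same overall route as the paper: Lemma~\ref{lem:gap-by-saddle} reduces the gap to saddle-function differences plus two quadratic residuals, everything is absorbed into $\bigl(1+R^2/(\lambda\gamma)\bigr)\Delta^{(T)}$, and Theorem~\ref{thm:spdc-convergence} together with $-\log(1-x)\geq x$ finishes the argument. The one step where you diverge is how the residuals $\frac{R^2}{2\gamma}\ltwos{x^{(T)}-x^\star}^2$ and $\frac{R^2}{2\lambda n}\ltwos{y^{(T)}-y^\star}^2$ are absorbed: the paper factors out $\frac{R^2}{\lambda\gamma}$ and then uses the strong convexity of $\spf(\cdot,y^\star)$ and of $-\spf(x^\star,\cdot)$ (its inequality~\eqref{eqn:distance-by-saddle}) to dominate the remaining distances by the two saddle-difference terms of $\Delta^{(T)}$ a second time, whereas you charge them against the two squared-norm terms of $\Delta^{(T)}$ by comparing coefficients. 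Both yield the factor $1+\kappa$, and both correctly use nonnegativity of all four constituents of $\Delta^{(T)}$. One small remark on your version: the explicit ratios involving $\sqrt{\kappa n/m}$ are more work than needed, and contrary to your comment they are not really where $\tau$ and $\sigma$ must enter --- since $R^2/(2\gamma)=\kappa\cdot\lambda/2\leq\kappa\bigl(1/(2\tau)+\lambda/2\bigr)$ and $R^2/(2\lambda n)=(m/n)\cdot\kappa\cdot(\gamma/2)/m\leq\kappa\bigl(1/(4\sigma)+\gamma/2\bigr)/m$ for any positive $\tau,\sigma$ (using only $m\leq n$), the coefficient comparison holds without invoking~\eqref{eqn:tau-sigma-theta} at this stage, which is also why the paper's strong-convexity route never needs the parameter values here.
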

\begin{proof}
The function $\spf(x,y^\star)$ is strongly convex in~$x$
with parameter $\lambda$, and~$x^\star$ is the minimizer.
Similarly, $-\spf(x^\star, y)$ is strongly convex in~$y$
with parameter $\gamma/n$, and is minimized by~$y^\star$.
Therefore,
\begin{equation}\label{eqn:distance-by-saddle}
  \frac{\lambda}{2}\ltwos{x^\supt-x^\star}^2 
  \leq \spf(x^\supt,y^\star)-\spf(x^\star,y^\star) ,  \qquad
  \frac{\gamma}{2n}\ltwos{y^\supt-y^\star}^2
  \leq \spf(x^\star,y^\star)-\spf(x^\star, y^\supt). 
\end{equation}
We bound the following weighted primal-dual gap
\begin{align*}
  & P(x^\supt)-P(x^\star) + \frac{n}{m}\left(D(y^\star)-D(y^\supt)\right) \\
  \leq ~& \spf(x^\supt,y^\star) -\spf(x^\star, y^\star)  + \frac{n}{m}\left( \spf(x^\star,y^\star) - \spf(x^\star, y^\supt) \right) + \frac{R^2}{2\gamma}\ltwos{x^\supt-x^\star}^2 + \frac{n}{m} \frac{R^2}{2n\lambda}\ltwos{y^\supt-y^\star}^2 \\
\leq ~& \Delta^\supt + \frac{R^2}{\lambda\gamma}\left(\frac{\lambda}{2}\ltwos{x^\supt-x^\star}^2 + \frac{n}{m} \frac{\gamma}{2n}\ltwos{y^\supt-y^\star}^2 \right) \\
\leq ~& \Delta^\supt + \frac{R^2}{\lambda\gamma} \left( \spf(x^\supt,y^\star) -\spf(x^\star, y^\star)  + \frac{n}{m}\left( \spf(x^\star,y^\star) - \spf(x^\star, y^\supt) \right) \right) \\
\leq ~& \left(1+\frac{R^2}{\lambda\gamma}\right) \Delta^\supt .
\end{align*}
The first inequality above is due to Lemma~\ref{lem:gap-by-saddle},
the second and fourth inequalities are due to the definition of~$\Delta^\supt$,
and the third inequality is due to~\eqref{eqn:distance-by-saddle}.
Taking expectations on both sides of the above inequality, then applying
Theorem~\ref{thm:spdc-convergence}, we obtain
\[
\E\left[P(x^\supt)-P(x^\star)+\frac{n}{m}\left(D(y^\star)-D(y^\supt)\right)\right]
~\leq ~ \theta^t \left(1+\frac{R^2}{\lambda\gamma}\right)
 \widetilde\Delta^{(0)} = (1+\kappa) \widetilde\Delta^\supt .
\]
Since $n\geq m$ and $D(y^\star)-D(y^\supt))\geq 0$, 
this implies the desired result. 
\end{proof}


\section{Extensions to non-smooth or non-strongly convex functions}
\label{sec:non-smooth}

The complexity bounds established in Section~\ref{sec:spdc-method}
require each $\phi_i$ be $(1/\gamma)$-smooth,
and the function~$g$ be $\lambda$-strongly convex. 
For general loss functions where either or both of these conditions fail
(e.g., the hinge loss and $\ell_1$-regularization),
we can slightly perturb the saddle-point function $f(x,y)$ so that 
the SPDC method can still be applied.

To be concise, we only consider the case where 
neither $\phi_i$ is smooth nor~$g$ is strongly convex.
Formally, we assume that each $\phi_i$ and~$g$ are convex and Lipschitz continuous,
and~$f(x,y)$ has a saddle point $(\xhat,\yhat)$.
We choose a scalar $\delta > 0$ and consider the modified saddle-point function:
\begin{align}
	f_\delta (x,y) \eqdef \frac{1}{n} \sum_{i=1}^n \left( \yvec_i \langle a_i, x\rangle - \Big( \phistar_i(\yvec_i) + \frac{\delta \yvec_i^2}{2} \Big) \right)
	+  g(x) + \frac{\delta}{2} \ltwos{x}^2 .
\end{align}
Denote by $(\xhat_\delta,\yhat_\delta)$ the saddle-point of $f_\delta$.
We employ the Mini-Batch SPDC method (Algorithm~\ref{alg:spdc-minibatch})
to approximate $(\xhat_\delta,\yhat_\delta)$,
treating $\phistar_i + \frac{\delta}{2}(\cdot)^2$ as $\phistar_i$ and
$g+ \frac{\delta}{2}\ltwos{\cdot}^2$ as $g$,
which are all $\delta$-strongly convex.
We note that adding strongly convex perturbation on $\phi_i^*$ is equivalent
to smoothing~$\phi_i$, which becomes $(1/\delta)$-smooth
(see, e.g., \cite{Nesterov05smooth}).
Letting $\gamma=\lambda=\delta$, the parameters $\tau$, $\sigma$ and $\theta$ 
in~\eqref{eqn:tau-sigma-theta} become 
\[
\tau = \frac{1}{R} \sqrt{\frac{m}{n}},\qquad
\sigma = \frac{1}{R} \sqrt{\frac{n}{m}},\qquad
~\mbox{and}\quad
\theta = 1 - \bigg( \frac{n}{m} + \frac{R}{\delta}\sqrt{\frac{n}{m}}\bigg)^{-1}.
\]
Although $(\xhat_\delta,\yhat_\delta)$ is not exactly the saddle point of $f$, 
the following corollary shows that applying the SPDC method to the perturbed
function $f_\delta$ effectively minimizes the original loss function $P$.
Similar results for the convergence of the primal-dual gap can also be
established.

\begin{corollary}\label{coro:general-function-iteration-complexity}
Assume that each $\phi_i$ is convex and $G_\phi$-Lipschitz continuous, 
and $g$ is convex and $G_g$-Lipschitz continuous. Define two constants:
\begin{align*}
	C_1 = (\ltwos{\xhat}^2 + G_\phi^2), \qquad 
    C_2 = (G_\phi R+G_g)^2\bigg(\frac{\Delta_{\delta}^{(0)}+\ltwo{y^{(0)}-y_{\delta}^\star}^2 R/(4\sqrt{mn})}{1/(2\tau)+\lambda/2}\bigg),
\end{align*}
where $\Delta_\delta^{(0)}$ is evaluated in terms of the perturbed function
$f_\delta$.
If we choose $\delta \leq \epsilon/C_1$, 
then we have $\E[P(x^{(T)}) - P(\xhat)] \leq \epsilon$ whenever
\[
    T \geq \bigg( \frac{n}{m} + \frac{R}{\delta} \sqrt{\frac{n}{m}} \bigg) 
    \log \bigg(\frac{4 C_2}{\epsilon^2}\bigg).
\]
\end{corollary}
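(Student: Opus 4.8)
The plan is to run Theorem~\ref{thm:spdc-convergence} on the perturbed saddle function $f_\delta$ (with $\gamma=\lambda=\delta$) and then convert the resulting control on the distance $\ltwos{x^{(T)}-\xhat_\delta}$ into a bound on the \emph{original} primal suboptimality $P(x^{(T)})-P(\xhat)$. The bridge is the smoothed primal objective: maximizing $f_\delta$ over $y$ gives
\[
  P_\delta(x) \eqdef \max_{y} f_\delta(x,y) = \frac1n\sum_{i=1}^n \widetilde\phi_i(\langle a_i,x\rangle) + g(x) + \frac{\delta}{2}\ltwos{x}^2 ,
\]
where $\widetilde\phi_i$ is the conjugate of $\phistar_i+\frac{\delta}{2}(\cdot)^2$, i.e.\ the Moreau envelope of $\phi_i$ with parameter $\delta$, and $\xhat_\delta$ is its minimizer. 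First I would record two standard smoothing facts: since $\phi_i$ is $G_\phi$-Lipschitz, (i) $\phi_i(z)-\frac{\delta G_\phi^2}{2}\le \widetilde\phi_i(z)\le\phi_i(z)$ for all $z$, and (ii) $\widetilde\phi_i$ is again $G_\phi$-Lipschitz. It is convenient to isolate the non-quadratic part $Q_\delta(x)\eqdef\frac1n\sum_i\widetilde\phi_i(\langle a_i,x\rangle)+g(x)$, which by (ii) and $\ltwos{a_i}\le R$ is globally $(G_\phi R+G_g)$-Lipschitz, and which by (i) satisfies $Q_\delta\le P\le Q_\delta+\frac{\delta G_\phi^2}{2}$.

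With these facts in hand I would split
\[
  P(x^{(T)})-P(\xhat)\;\le\;\underbrace{\bigl[Q_\delta(x^{(T)})-Q_\delta(\xhat_\delta)\bigr]}_{\text{optimization error}}\;+\;\underbrace{\bigl[Q_\delta(\xhat_\delta)-P(\xhat)\bigr]+\tfrac{\delta G_\phi^2}{2}}_{\text{perturbation error}} .
\]
The perturbation error is handled purely by the approximation facts: since $\xhat_\delta$ minimizes $P_\delta$, we have $P_\delta(\xhat_\delta)\le P_\delta(\xhat)\le P(\xhat)+\frac{\delta}{2}\ltwos{\xhat}^2$, and dropping the nonnegative quadratic at $\xhat_\delta$ gives $Q_\delta(\xhat_\delta)\le P(\xhat)+\frac{\delta}{2}\ltwos{\xhat}^2$. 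Combining with the $\frac{\delta G_\phi^2}{2}$ term bounds the whole perturbation error by $\frac{\delta}{2}(\ltwos{\xhat}^2+G_\phi^2)=\frac{\delta}{2}C_1$, which is at most $\epsilon/2$ under the hypothesis $\delta\le\epsilon/C_1$.

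For the optimization error I would use Lipschitz continuity of $Q_\delta$ to write $Q_\delta(x^{(T)})-Q_\delta(\xhat_\delta)\le (G_\phi R+G_g)\ltwos{x^{(T)}-\xhat_\delta}$, take expectations, and apply Jensen's inequality to pass to $\sqrt{\E\ltwos{x^{(T)}-\xhat_\delta}^2}$. Since every term of $\Delta^{(T)}$ (formed with $f_\delta$, $\xhat_\delta$, $\yhat_\delta$) is nonnegative, $\ltwos{x^{(T)}-\xhat_\delta}^2\le \Delta^{(T)}/(\tfrac{1}{2\tau}+\tfrac{\lambda}{2})$, and Theorem~\ref{thm:spdc-convergence} gives $\E[\Delta^{(T)}]\le\theta^T\widetilde\Delta^{(0)}$ with $\widetilde\Delta^{(0)}=\Delta_\delta^{(0)}+\ltwos{y^{(0)}-\yhat_\delta}^2/(4m\sigma)$. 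Plugging in $\sigma=\tfrac1R\sqrt{n/m}$ yields $1/(4m\sigma)=R/(4\sqrt{mn})$, so $\widetilde\Delta^{(0)}$ is exactly the numerator appearing in $C_2$ and $(G_\phi R+G_g)^2\,\widetilde\Delta^{(0)}/(\tfrac{1}{2\tau}+\tfrac{\lambda}{2})=C_2$; the optimization-error expectation therefore collapses to $\theta^{T/2}\sqrt{C_2}$. Imposing $\theta^{T/2}\sqrt{C_2}\le\epsilon/2$, i.e.\ $\theta^{T}\le \epsilon^2/(4C_2)$, and applying $-\log\theta\ge(\tfrac{n}{m}+\tfrac{R}{\delta}\sqrt{n/m})^{-1}$ exactly as in Corollary~\ref{coro:spdc-complexity}, gives the stated bound on $T$; adding the two halves yields $\E[P(x^{(T)})-P(\xhat)]\le\epsilon$.

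I expect the main difficulty to be bookkeeping rather than conceptual. The key maneuver is to peel the strongly-convex quadratic $\frac{\delta}{2}\ltwos{\cdot}^2$ off into the perturbation error, so that the remaining $Q_\delta$ is genuinely Lipschitz: the gradient $\delta x$ of the quadratic is not uniformly bounded and would spoil a naive subgradient estimate on $P_\delta$ itself. The second delicate point is the square root --- the appearance of $\epsilon^2$ inside the logarithm and of $(G_\phi R+G_g)^2$ in $C_2$ both arise from converting the Lipschitz/distance bound on $Q_\delta$ through Jensen's inequality, so the constants must be squared consistently when matching the definition of $C_2$.
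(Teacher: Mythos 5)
Your proof is correct and, at the level of architecture, matches the paper's: run Theorem~\ref{thm:spdc-convergence} on $f_\delta$ with $\gamma=\lambda=\delta$, split the error into a perturbation part bounded by $\tfrac{\delta}{2}C_1\le\epsilon/2$ and an optimization part bounded by $\theta^{T/2}\sqrt{C_2}$ via Lipschitz continuity, a distance bound, and Jensen. The one place you genuinely diverge is the perturbation error: the paper bounds $P(\xhat_\delta)-P(\xhat)$ directly by a chain of saddle-point inequalities alternating between $f$ and $f_\delta$, using $\ltwos{\ytilde}^2\le nG_\phi^2$ where $\ytilde=\arg\max_y f(\xhat_\delta,y)$; you instead introduce the Moreau envelopes $\widetilde\phi_i$ and the smoothed primal part $Q_\delta$, and bound $Q_\delta(\xhat_\delta)-P(\xhat)$ plus the uniform envelope error $\tfrac{\delta G_\phi^2}{2}$. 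The two arguments rest on the same underlying fact (that $\dom\,\phistar_i\subseteq[-G_\phi,G_\phi]$) and produce the same constant $C_1$, but your route has the advantage of making explicit why the Lipschitz constant $G_\phi R+G_g$ is legitimate for the optimization term: the non-Lipschitz quadratic $\tfrac{\delta}{2}\ltwos{\cdot}^2$ has been peeled off before the Lipschitz bound is invoked (the paper instead applies the Lipschitz bound to the original $P$, which is equally valid and slightly shorter). You also correctly supply a step the paper leaves implicit, namely that $\ltwos{x^{(T)}-\xhat_\delta}^2\le\Delta_\delta^{(T)}/\bigl(1/(2\tau)+\lambda/2\bigr)$ because the remaining terms of $\Delta_\delta^{(T)}$ are nonnegative by the saddle-point property, and your identification $1/(4m\sigma)=R/(4\sqrt{mn})$ confirms that the numerator of $C_2$ is exactly the quantity $\widetilde\Delta^{(0)}$ controlled by Theorem~\ref{thm:spdc-convergence}.
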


\begin{proof}
Let $\ytilde =\arg\max_y f(\xhat_\delta, y)$ be a shorthand notation. 
We have
\begin{align*}
	P(\xhat_\delta) & ~\stackrel{(i)} =~ f(\xhat_\delta, \ytilde)  ~\stackrel{(ii)}\leq~ f_\delta(\xhat_\delta, \ytilde) + \frac{\delta\ltwos{\ytilde}^2}{2 n}
	~\stackrel{(iii)}\leq~ f_\delta(\xhat_\delta, \yhat_\delta) + \frac{\delta\ltwos{\ytilde}^2}{2 n}
	~\stackrel{(iv)} \leq~ f_\delta(\xhat, \yhat_\delta) + \frac{\delta\ltwos{\ytilde}^2}{2 n}\\
	& ~\stackrel{(v)}\leq~ f(\xhat, \yhat_\delta) + \frac{\delta\ltwos{\xhat}^2}{2} + \frac{\delta\ltwos{\ytilde}^2}{2 n}
	~\stackrel{(vi)}\leq~ f(\xhat, \yhat) + \frac{\delta\ltwos{\xhat}^2}{2} + \frac{\delta\ltwos{\ytilde}^2}{2 n}\\
	& ~\stackrel{(vii)} = P(\xhat) + \frac{\delta\ltwos{\xhat}^2}{2} + \frac{\delta\ltwos{\ytilde}^2}{2 n} .
\end{align*}
Here, equations~(i) and~(vii) use the definition of the function $f$,
inequalities~(ii) and (v) use the definition of the function $f_\delta$,
inequalities~(iii) and~(iv) use the fact that $(\xhat_\delta,\yhat_\delta)$
is the saddle point of $f_\delta$,
and inequality~(vi) is due to the fact that $(\xhat,\yhat)$ 
is the saddle point of~$f$.

Since $\phi_i$ is $G_\phi$-Lipschitz continuous, the domain of $\phistar_i$ 
is in the interval $[-G_\phi,G_\phi]$, which implies
$\ltwos{\ytilde}^2 \leq n G_\phi^2$
(see, e.g., \cite[Lemma 1]{SSZhang13acclSDCA}). 
Thus, we have
\begin{align}
	P(\xhat_\delta) - P(\xhat) \leq  \frac{\delta}{2}(\ltwos{\xhat}^2 + G_\phi^2) = \frac{\delta}{2}C_1 .
	\label{eqn:general-function-bound-1}
\end{align}
On the other hand, since $P$ is $(G_\phi R+G_g)$-Lipschitz continuous, 
Theorem~\ref{thm:spdc-convergence} implies
\begin{align}
	\E[ P(x^{(T)}) -P(\xhat_\delta)]  &~\leq~ (G_\phi R+G_g) \E[\ltwos{x^{(T)} - \xhat_\delta}] 
    ~\leq~  \sqrt{C_2}\left( 1 - \bigg( \frac{n}{m} + \frac{R}{\delta} \sqrt{\frac{n}{m}}\bigg)^{-1}\right)^{T/2}. 
    \label{eqn:general-function-bound-2}
\end{align}	
Combining~\eqref{eqn:general-function-bound-1} 
and~\eqref{eqn:general-function-bound-2},
in order to obtain
$\E[ P(x^{(T)}) -P(\xhat)]\leq \epsilon$, 
it suffices to have $C_1\delta \leq \epsilon$ and
\begin{align}\label{eqn:general-function-iteration-condition}
    \sqrt{C_2}\bigg( 1 - \bigg( \frac{n}{m} + \frac{R}{\delta} \sqrt{\frac{n}{m}}\bigg)^{-1}\bigg)^{T/2}  \leq \frac{\epsilon}{2}.
\end{align}
The corollary is established by finding the smallest $T$ that satisfies inequality~\eqref{eqn:general-function-iteration-condition}.
\end{proof}

\begin{table}[t]
\renewcommand{\arraystretch}{1.3}
    \centering
    \begin{tabular}{|c|c|c|}
        \hline
        $\phi_i$ & $g$ & iteration complexity $\widetilde\order(\cdot)$ \\
        \hline
        $(1/\gamma)$-smooth & $\lambda$-strongly convex & $n/m+\sqrt{(n/m)/(\lambda\gamma)}$ \\
        $(1/\gamma)$-smooth & non-strongly convex & $n/m+\sqrt{(n/m)/(\epsilon\gamma)}$ \\
        non-smooth & $\lambda$-strongly convex & $n/m+\sqrt{(n/m)/(\epsilon\lambda)}$ \\
        non-smooth & non-strongly convex & $n/m+\sqrt{n/m}/\epsilon$ \\
        \hline
    \end{tabular}
    \caption{Iteration complexities of the SPDC method under different assumptions on the functions~$\phi_i$ and $g$. For the last three cases, we solve the perturbed 
    saddle-point problem with $\delta=\epsilon/C_1$.}
    \label{tab:spdc-complexities}
\end{table}

There are two other cases that can be considered:
when $\phi_i$ is not smooth but~$g$ is strongly convex,
and when $\phi_i$ is smooth but~$g$ is not strongly convex.
They can be handled with the same technique described above, 
and we omit the details here.
In Table~\ref{tab:spdc-complexities},
we list the complexities of the Mini-Batch SPDC method for finding an
$\epsilon$-optimal solution of problem~\eqref{eqn:min-primal}
under various assumptions.
Similar results are also obtained in \cite{SSZhang13acclSDCA}.


\section{SPDC with non-uniform sampling}
\label{sec:non-uniform}

One potential drawback of the SPDC algorithm is that, 
its convergence rate depends on a problem-specific constant~$R$, 
which is the largest $\ell_2$-norm of the feature vectors~$a_i$. 
As a consequence, the algorithm may perform badly on unnormalized data, 
especially if the $\ell_2$-norms of some feature vectors are substantially 
larger than others. 
In this section, we propose an extension of the SPDC method to mitigate 
this problem, which is given in Algorithm~\ref{alg:spdc-nonuniform}.

\begin{algorithm}[t]
\DontPrintSemicolon
\KwIn{parameters $\tau,\sigma,\theta\in \R_+$,
    number of iterations $T$, and initial points $x^{(0)}$ and $y^{(0)}$.}
\vspace{2pt}
\textbf{Initialize:} 
$\xbar^{(0)}=x^{(0)}$, $\uvec^{(0)}=(1/n)\sum_{i=1}^n y^{(0)}_i a_i$.\; 
\vspace{2pt}
\For{$t=0,1,2,\dots,T-1$}
{
Randomly pick $k\in \{1,2,\dots,n\}$, with probability $p_k$ given 
in~\eqref{eqn:nonuniform-prob}.\;
Execute the following updates:
	\begin{align}
		\yvec_i^\suptp &= \left\{ \begin{array}{ll}
        \arg\max_{\beta\in \R} \left\{ \beta \langle a_i, \xbar^\supt \rangle - \phistar_{i}(\beta) -  \frac{p_i n }{2\sigma}(\beta-\yvec_i^\supt)^2 \right\}
		& i = k,\\
		\yvec_i^\supt & i\neq k,
		\end{array} \right. \label{eqn:spdc-robust-maximize} \\
        \uvec^\suptp &= \uvec^\supt + \frac{1}{n}(\yvec_k^\suptp - \yvec_k^\supt) a_k, \nonumber\\
        x^\suptp &= \arg\min_{x\in \R^d} \left\{ g(x) + \Bigl\langle \uvec^\supt + \frac{1}{p_k} (u^\suptp-u^\supt), ~x \Bigr\rangle +  \frac{\ltwos{x-x^\supt}^2}{2\tau} \right\},
		\label{eqn:spdc-robust-minimize}\\
		\xbar ^\suptp &= x^\suptp + \theta (x^\suptp - x^\supt). \nonumber
	\end{align}
\vspace{-20pt}
}
\KwOut{$x^{(T)}$ and $\yvec^{(T)}$}
\caption{SPDC method with weighted sampling}
\label{alg:spdc-nonuniform}
\end{algorithm}

The basic idea is to use non-uniform 
sampling in picking the dual coordinate to update at each iteration.
In Algorithm~\ref{alg:spdc-nonuniform}, we pick coordinate~$k$ with the probability 
\begin{equation}\label{eqn:nonuniform-prob}
    p_k = (1-\alpha) \frac{1}{n} + \alpha \,\frac{\ltwos{a_k}}{\sum_{i=1}^n \ltwos{a_i}}, 
    \qquad k=1,\ldots, n,
\end{equation}
where $\alpha\in(0,1)$ is a parameter. 
In other words, this distribution is a (strict) convex combination of the
uniform distribution and the distribution that is proportional to the 
feature norms.
Therefore, instances with large feature norms are sampled more frequently,
controlled by~$\alpha$.
Simultaneously, we adopt an adaptive regularization in 
step~\eqref{eqn:spdc-robust-maximize},
imposing stronger regularization on such instances. 
In addition, we adjust the weight of~$a_k$ in~\eqref{eqn:spdc-robust-minimize}
for updating the primal variable.
As a consequence, the convergence rate of Algorithm~\ref{alg:spdc-nonuniform}
depends on the average norm of feature vectors, 
as well as the parameter~$\alpha$.
This is summarized in the following theorem.

\begin{theorem}\label{thm:nonuniform-spdc-convergence}
Suppose Assumption~\ref{asmp:smooth-convex} holds. 
Let $\barR =\frac{1}{n} \sum_{i=1}^n \ltwos{a_i}$.
If the parameters $\tau,\sigma,\theta$ in Algorithm~\ref{alg:spdc-nonuniform}
are chosen such that
\begin{align}
    \tau = \frac{\alpha}{2\barR} \sqrt{\frac{\gamma}{n\lambda}}, \qquad 
    \sigma = \frac{\alpha}{2\barR} \sqrt{\frac{n\lambda}{\gamma}}, \qquad 
    \theta = 1 - \left(\frac{n}{1-\alpha} + \frac{\barR}{\alpha}\sqrt{\frac{n}{\lambda\gamma}}\right)^{-1}, 
    \label{eqn:tau-sigma-theta-alpha}
\end{align}
then for each $t\geq 1$, we have
\begin{align*}
& \Big( \frac{1}{2\tau}+\lambda\Big) {\E\bigl[\ltwos{x^{(t)} - \xhat}^2\bigr]} 
+ \Big(\frac{1}{4\sigma}+\frac{\gamma}{n}\Big) {\E\bigl[\ltwos{\yvec^{(t)} - \yhat}^2\bigr]} \\
\leq~ & \theta^{\,t} \, \bigg( \Big( \frac{1}{2\tau} + \lambda\Big)\ltwos{x^{(0)}-\xhat}^2 + \Big( \frac{1}{2\sigma} + \frac{\gamma}{1-\alpha}\Big)\ltwos{y^{(0)}-\yhat}^2 \bigg).
\end{align*}
\end{theorem}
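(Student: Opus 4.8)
The plan is to mirror the proof of Theorem~\ref{thm:spdc-convergence}, adapting each step to the weighted sampling distribution~\eqref{eqn:nonuniform-prob}. First I would perform a one-step analysis conditioned on the index~$k$ drawn at iteration~$t$. The dual update~\eqref{eqn:spdc-robust-maximize} maximizes the $\gamma$-strongly concave function $\beta\mapsto\beta\langle a_k,\xbar\supt\rangle-\phistar_k(\beta)$ regularized by a proximal term with coefficient $p_k n/\sigma$; applying the three-point inequality for the prox of a strongly convex function at the comparison point $\yhat_k$ produces a lower bound carrying the curvature $\tfrac12(\gamma+p_k n/\sigma)(\yvec_k\suptp-\yhat_k)^2$. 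Symmetrically, the primal update~\eqref{eqn:spdc-robust-minimize} minimizes a $\lambda$-strongly convex objective regularized by $\tfrac{1}{2\tau}\ltwos{\cdot-x\supt}^2$, and its optimality inequality at $\xhat$ carries a curvature term proportional to $(\lambda+1/\tau)\ltwos{x\suptp-\xhat}^2$. I would then substitute the first-order saddle-point conditions for $(\xhat,\yhat)$ — namely $\langle a_i,\xhat\rangle\in\partial\phistar_i(\yhat_i)$ and $-\tfrac1n\sum_i\yhat_i a_i\in\partial g(\xhat)$ — to turn the conjugate and regularizer values into squared distances.

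Next I would add the primal and dual inequalities and cancel the bilinear coupling through~$a_k$. The term $\yvec_k\langle a_k,\xbar\supt\rangle$ from the dual step and the term $\langle\tfrac{1}{p_k}(\uvec\suptp-\uvec\supt),x\rangle$ from the primal step are coupled, and the extrapolation $\xbar\supt=x\supt+\theta(x\supt-x\suptm)$ is exactly what lets the residual bilinear terms telescope across consecutive iterations, as in the batch analysis of Chambolle and Pock~\cite{ChambollePock11}; this telescoping is the source of the accelerated rate. I would collect the results into a Lyapunov function built from the scaled distances $\bigl(\tfrac{1}{2\tau}+\lambda\bigr)\ltwos{x\supt-\xhat}^2$ and a dual distance term, together with the nonnegative saddle-gap quantities that emerge from the optimality inequalities and that I would discard at the end to reach the distance-only statement.

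The essential new ingredient, and the step I expect to be the main obstacle, is taking the conditional expectation over the weighted draw $k\sim p$ and matching coefficients. Two features of Algorithm~\ref{alg:spdc-nonuniform} are tuned for this. First, the factor $1/p_k$ in the primal gradient makes the stochastic direction unbiased: writing $d_i$ for the increment $\yvec_i\suptp-\yvec_i\supt$ that coordinate~$i$ would receive if it were selected, one has $\E_k[\tfrac{1}{p_k}(\uvec\suptp-\uvec\supt)]=\E_k[\tfrac{1}{np_k}d_k a_k]=\tfrac1n\sum_i d_i a_i$, recovering the full-batch direction. Second, the dual proximal weight $p_k n/\sigma$ is conjugate to this reweighting, so that after each coordinate's contribution is weighted by its probability~$p_k$ the expected dual progress recombines with $\ltwos{\yvec\supt-\yhat}^2$ to produce the left-hand coefficient $\tfrac{1}{4\sigma}+\tfrac{\gamma}{n}$. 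The reweighting introduces a variance-type term proportional to $\ltwos{a_k}^2/(np_k)$ times squared increments, and here the convex-combination form~\eqref{eqn:nonuniform-prob} enters through its two one-sided bounds $p_k n\ge 1-\alpha$ and $\ltwos{a_k}^2/(np_k)\le\barR\,\ltwos{a_k}/\alpha$: the former lower-bounds the effective dual regularization and yields the $n/(1-\alpha)$ contribution in~$\theta$, while the latter controls the coupling and replaces the worst-case norm~$R$ of the uniform analysis by the average norm~$\barR$.

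Finally, I would check that the specific $\tau,\sigma,\theta$ in~\eqref{eqn:tau-sigma-theta-alpha} balance the collected terms so that the expected Lyapunov function contracts by~$\theta$ per step, i.e.\ $\E[\Phi\suptp]\le\theta\,\E[\Phi\supt]$, and then unroll this recursion from $t=0$; discarding the nonnegative gap terms yields the stated bound, with the mismatch between the initial dual coefficient $\tfrac{1}{2\sigma}+\tfrac{\gamma}{1-\alpha}$ and the running coefficient $\tfrac{1}{4\sigma}+\tfrac{\gamma}{n}$ reflecting slack absorbed in the first iteration. The bulk of the difficulty is the bookkeeping: threading the factor $p_k$ through every term and verifying that exactly these two bounds on $p_k$ close the recursion with the stated parameters.
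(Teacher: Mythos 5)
Your proposal is correct and follows essentially the same route as the paper's proof: a one-step primal--dual analysis with the strong-convexity/concavity three-point inequalities, conversion of the saddle terms to squared distances via the optimality conditions at $(\xhat,\yhat)$, conditional expectation over the weighted draw using the unbiasedness supplied by the $1/p_k$ factor, and closure of the recursion through exactly the two bounds $p_k n\ge 1-\alpha$ and $p_k\ge\alpha\ltwos{a_k}/(n\barR)$ together with $\sigma\tau\barR^2=\alpha^2/4$. The only cosmetic difference is that the paper converts the gap terms to distances up front rather than carrying them in the Lyapunov function, and the $\tfrac{1}{4\sigma}$ versus $\tfrac{1}{2\sigma}$ discrepancy comes from eliminating the residual cross term at the final step rather than from first-iteration slack; neither affects the validity of your plan.
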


Choosing $\alpha = 1/2$ and comparing with Theorem~\ref{thm:spdc-convergence}, 
the parameters $\tau$, $\sigma$, and~$\theta$ in 
Theorem~\ref{thm:nonuniform-spdc-convergence}
are determined by the average norm of the features, 
$\barR=\frac{1}{n} \sum_{i=1}^n \ltwos{a_i}$, 
instead of the largest one $R=\max\{\|a_1\|_2,\ldots,\|a_n\|_2\}$.
This difference makes Algorithm~\ref{alg:spdc-nonuniform} 
more robust to unnormalized feature vectors. 
For example, if the $a_i$'s are sampled i.i.d.\ from a multivariate normal 
distribution, then $\max_{i}\{\ltwos{a_i}\}$ almost surely goes to infinity 
as $n\to \infty$, but the average norm $\frac{1}{n} \sum_{i=1}^n \ltwos{a_i}$
converges to $\E[\ltwos{a_i}]$. 

Since~$\theta$ is a bound on the convergence factor, we would like to make 
it as small as possible. 
For its expression in~\eqref{eqn:tau-sigma-theta-alpha}, it can be minimized
by choosing
\[
  \alpha^\star = \frac{1}{1+(n/\bar\kappa)^{1/4}} ,
\]
where $\bar\kappa = \barR^2/(\lambda\gamma)$ is an average condition number.
We have $\alpha^\star=1/2$ if $\bar\kappa=n$.
The value of $\alpha^\star$ 
decreases slowly to zero as the ratio $n/\bar\kappa$ grows, 
and increases to one as the ratio $n/\bar\kappa$ drops.
Thus, we may choose a relatively uniform distribution for well conditioned
problems, 
but a more aggressively weighted distribution for ill-conditioned problems.

For simplicity of presentation, we described 
in Algorithm~\ref{alg:spdc-nonuniform} a weighted sampling SPDC 
method with single dual coordinate update, i.e., the case of $m=1$.
It is not hard to see that the non-uniform sampling scheme 
can also be extended to Mini-Batch SPDC with $m>1$.
Here, we omit the technical details.

\section{Related Work}
\label{sec:related-work}

Chambolle and Pock \cite{ChambollePock11} considered a class of convex
optimization problems with the following saddle-point structure:
\begin{equation}\label{eqn:CP11-saddle-point}
    \min_{x\in\R^d} \; \max_{y\in\R^n} ~\bigl\{ \langle Kx, y\rangle + G(x) - F^*(y) \bigr\} ,
\end{equation}
where $K\in\R^{m\times d}$, $G$ and $F^*$ are proper closed convex functions,
with $F^*$ itself being the conjugate of a convex function~$F$.
They developed the following first-order primal-dual algorithm:
\begin{align}
    y^\suptp &= \arg\max_{y\in\R^n} \left\{ \langle K\xbar^\supt, y\rangle - F^*(y)-\frac{1}{2\sigma}\|y-y^\supt\|_2^2 \right\}, \label{eqn:CP11-dual-update}\\
    x^\suptp &= \arg\min_{x\in\R^d} \left\{ \langle K^T y^\suptp, x\rangle + G(x) + \frac{1}{2\tau}\|x-x^\supt\|_2^2 \right\}, \label{eqn:CP11-primal-update}\\
    \xbar^\suptp &= x^\suptp + \theta (x^\suptp-x^\supt) \label{eqn:CP11-extrapolation}.
\end{align}
When both $F^*$ and~$G$ are strongly convex and the parameters~$\tau$, 
$\sigma$ and~$\theta$ are chosen appropriately, this algorithm obtains
accelerated linear convergence rate \cite[Theorem~3]{ChambollePock11}.

We can map the saddle-point problem~\eqref{eqn:min-max-saddle} 
into the form of~\eqref{eqn:CP11-saddle-point} by letting
$A=[a_1,\ldots,a_n]^T$ and 
\begin{equation}\label{eqn:CP11-correspondence}
    K = \frac{1}{n} A, \qquad G(x)=g(x), \qquad 
    F^*(y) =\frac{1}{n}\sum_{i=1}^n\phi_i^*(y_i) .
\end{equation}
The SPDC method developed in this paper can be viewed as an extension 
of the batch method \eqref{eqn:CP11-dual-update}-\eqref{eqn:CP11-extrapolation},
where the dual update step~\eqref{eqn:CP11-dual-update}
is replaced by a single coordinate update~\eqref{eqn:spdc-maximize}
or a mini-batch update~\eqref{eqn:spdc-minibatch-maximize}.
However, in order to obtain accelerated convergence rate, 
more subtle changes are necessary in the primal update step. 
More specifically, we introduced the auxiliary variable
$u^\supt = \frac{1}{n} \sum_{i=1}^n y_i^\supt a_i = K^T y^\supt$,
and replaced the primal update step~\eqref{eqn:CP11-primal-update}  
by~\eqref{eqn:spdc-minimize} and~\eqref{eqn:spdc-minibatch-minimize}.
The primal extrapolation step~\eqref{eqn:CP11-extrapolation} stays the same.

To compare the batch complexity of SPDC with that of 
\eqref{eqn:CP11-dual-update}-\eqref{eqn:CP11-extrapolation},
we use the following facts implied by Assumption~\ref{asmp:smooth-convex}
and the relations in~\eqref{eqn:CP11-correspondence}:
\[
    \|K\|_2 = \frac{1}{n} \|A\|_2, \quad 
    G(x) ~\mbox{is}~ \lambda\mbox{-strongly convex}, \quad
    \mbox{and}~F^*(y) ~\mbox{is}~ (\gamma/n)\mbox{-strongly convex}.
\]
Based on these conditions, we list in Table~\ref{tab:CP11-SPDC}
the equivalent parameters used in \cite[Algorithm~3]{ChambollePock11}
and the batch complexity obtained in \cite[Theorem~3]{ChambollePock11},
and compare them with SPDC.

The batch complexity of the Chambolle-Pock algorithm is
$\widetilde\order(1+\|A\|_2/(2\sqrt{n\lambda\gamma}))$,
where the $\widetilde\order(\cdot)$ notation hides 
the $\log(1/\epsilon)$ factor.
We can bound the spectral norm $\|A\|_2$ by the Frobenius norm $\|A\|_F$
and obtain
\[
    \|A\|_2 \leq \|A\|_F \leq \sqrt{n}\max_i \{ \|a_i\|_2\} = \sqrt{n} R.
\]
(Note that the second inequality above would be an equality 
if the columns of~$A$ are normalized.)
So in the worst case, the batch complexity of the Chambolle-Pock algorithm 
becomes
\[
    \widetilde\order\left( 1+R/\sqrt{\lambda\gamma} \right)
    =\widetilde\order\left( 1+\sqrt{\kappa} \right), 
    \qquad \mbox{where}~\kappa=R^2/(\lambda\gamma),
\]
which matches the worst-case complexity of the AFG methods
\cite{Nesterov04book,Nesterov13composite} 
(see Section~\ref{sec:cond-number} and also the discussions 
in \cite[Section~5]{LinLuXiao14apcg}).
This is also of the same order as the complexity of SPDC with $m=n$
(see Section~\ref{sec:saddle-approx-analysis}).
When the condition number $\kappa\gg 1$,
they can be $\sqrt{n}$ worse than the batch complexity of SPDC with $m=1$,
which is $\widetilde\order(1+\sqrt{\kappa/n})$.

\begin{table}
\renewcommand{\arraystretch}{1.8}
    \centering
    \begin{tabular}{|c|c|c|c|c|}
        \hline
        algorithm & $\tau$ & $\sigma$ & $\theta$ & batch complexity \\ 
        \hline
        Chambolle-Pock \cite{ChambollePock11} & $\frac{\sqrt{n}}{\|A\|_2}\sqrt{\frac{\gamma}{\lambda}}$ & $\frac{\sqrt{n}}{\|A\|_2}\sqrt{\frac{\lambda}{\gamma}}$ & $1 - \frac{1}{1+\|A\|_2/(2\sqrt{n\lambda\gamma})}$ & $\left(1+\frac{\|A\|_2}{2\sqrt{n\lambda\gamma}}\right) \log(1/\epsilon)$ \\ 
        SPDC with $m=n$ & $\frac{1}{R}\sqrt{\frac{\gamma}{\lambda}}$ & $\frac{1}{R}\sqrt{\frac{\lambda}{\gamma}}$ & $1-\frac{1}{1+R/\sqrt{\lambda\gamma}}$ & $\left(1+\frac{R}{\sqrt{\lambda\gamma}}\right)\log(1/\epsilon)$ \\
        SPDC with $m=1$ & $\frac{1}{R}\sqrt{\frac{\gamma}{n\lambda}}$ & $\frac{1}{R}\sqrt{\frac{n\lambda}{\gamma}}$ & $1-\frac{1}{n+R\sqrt{n/\lambda\gamma}}$ & $\left(1+\frac{R}{\sqrt{n\lambda\gamma}}\right) \log(1/\epsilon)$ \\
        \hline
    \end{tabular}
    \caption{Comparing SPDC with Chambolle and Pock \cite[Algorithm~3, Theorem~3]{ChambollePock11}.}
    \label{tab:CP11-SPDC}
\end{table}

If either $G(x)$ or $F^*(y)$ in~\eqref{eqn:CP11-saddle-point} is not strongly 
convex, Chambolle and Pock proposed variants of the primal-dual batch
algorithm to achieve accelerated sublinear convergence rates
\cite[Section~5.1]{ChambollePock11}.
It is also possible to extend them to coordinate update methods
for solving problem~\eqref{eqn:min-primal}
when either $\phi_i^*$ or~$g$ is not strongly convex. 
Their complexities would be similar to those 
in Table~\ref{tab:spdc-complexities}.

Our algorithms and theory can be readily generalized to solve the problem of 
\[
  \minimize_{x\in\R^d} \quad \frac{1}{n}\sum_{i=1}^n \phi_i(A_i^T x) + g(x),
\]
where each $A_i$ is an $d_i\times d$ matrix, and $\phi_i:\R^{d_i}\to\R$ is
a smooth convex function.
This more general formulation is used, e.g., in \cite{SSZhang13acclSDCA}.
Most recently, Lan \cite{Lan2015RPDG} considered a special case with 
$d_i=d$ and $A_i=I_d$, and recognized that the dual coordinate proximal mapping
used in~\eqref{eqn:spdc-maximize} and~\eqref{eqn:spdc-robust-maximize} 
is equivalent to computing the primal gradients $\nabla\phi_i$ at a 
particular sequence of points $\underline{x}^\supt$.
Based on this observation, he derived a similar randomized incremental 
gradient algorithm which  
share the same order of iteration complexity as we presented in this paper.


\subsection{Dual coordinate ascent methods}
We can also solve the primal problem~\eqref{eqn:min-primal} via its dual:
\begin{equation}\label{eqn:max-dual}
    \maximize_{y\in\R^n} 
    ~\biggl\{ D(y) \eqdef \frac{1}{n}\sum_{i=1}^n -\phi_i^*(y_i)
    - g^*\Bigl(-\frac{1}{n} \sum_{i=1}^n y_i a_i\Bigr) \biggr\},
\end{equation}
Because of the problem structure, coordinate ascent methods
(e.g., \cite{Platt99,ChangHsiehLin08,HsChLiKeSu08,SSZhang13SDCA})
can be more efficient than full gradient methods. 
In the stochastic dual coordinate ascent (SDCA) method \cite{SSZhang13SDCA},
a dual coordinate $y_i$ is picked at random during each iteration 
and updated to increase the dual objective value.
Shalev-Shwartz and Zhang~\cite{SSZhang13SDCA} showed that 
the iteration complexity of SDCA is $O\left((n+\kappa)\log(1/\epsilon)\right)$,
which corresponds to the batch complexity
$\widetilde\order(1+\kappa/n)$.

For more general convex optimization problems, 
there is a vast literature on coordinate descent methods;
see, e.g., the recent overview by Wright \cite{Wright15CD}.
In particular, 
Nesterov's work on randomized coordinate descent \cite{Nesterov12rcdm}
sparked a lot of recent activities on this topic. 
Richt\'{a}rik and Tak\'{a}\v{c} \cite{RichtarikTakac12} extended the algorithm
and analysis to composite convex optimization.
When applied to the dual problem~\eqref{eqn:max-dual}, it becomes one variant
of SDCA studied in \cite{SSZhang13SDCA}.
Mini-batch and distributed versions of SDCA have been proposed and analyzed in
\cite{TakacBijralRichtarikSrebro13} and \cite{Yang13DistrSDCA} respectively.
Non-uniform sampling schemes 
have been studied for both stochastic gradient and SDCA methods
(e.g., \cite{Needell14stochastic,XiaoZhang14ProxSVRG,ZhaoZhang14Sampling,QuRichtarikZhang14}).

Shalev-Shwartz and Zhang \cite{SSZhang13minibatchSDCA} proposed an 
accelerated mini-batch SDCA method which incorporates additional primal updates
than SDCA, and bears some similarity to our Mini-Batch SPDC method.
They showed that its complexity interpolates between that of SDCA and AFG
by varying the mini-batch size~$m$.
In particular, for $m=n$, it matches that of the AFG methods (as SPDC does).
But for $m=1$, the complexity of their method is the same as SDCA,
which is worse than SPDC for ill-conditioned problems.

In addition, Shalev-Shwartz and Zhang \cite{SSZhang13acclSDCA}
developed an accelerated proximal SDCA method which achieves the same 
batch complexity $\widetilde\order\bigl(1+\sqrt{\kappa/n}\bigr)$ as SPDC.
Their method is an inner-outer iteration procedure, where the outer loop
is a full-dimensional accelerated gradient method 
in the primal space $x\in\R^d$.
At each iteration of the outer loop, the SDCA method \cite{SSZhang13SDCA}
is called to solve the dual problem~\eqref{eqn:max-dual} with 
customized regularization parameter and precision.
In contrast, SPDC is a straightforward single-loop 
coordinate optimization methods.

More recently, Lin et al.\ \cite{LinLuXiao14apcg} developed an
accelerated proximal coordinate gradient (APCG) method for solving 
a more general class of composite convex optimization problems.
When applied to the dual problem~\eqref{eqn:max-dual}, 
APCG enjoys the same batch complexity 
$\widetilde\order\bigl(1+\sqrt{\kappa/n}\bigr)$ as of SPDC.
However, it needs an extra primal proximal-gradient 
step to have theoretical guarantees on the convergence of primal-dual gap 
\cite[Section~5.1]{LinLuXiao14apcg}. 
The computational cost of this additional step is equivalent to
one pass of the dataset, thus it does not affect the overall complexity. 

\subsection{Other related work}

Another way to approach problem~\eqref{eqn:min-primal} is to reformulate it
as a constrained optimization problem
\begin{align}
    & \minimize \quad  \frac{1}{n}\sum_{i=1}^n \phi_i(z_i) + g(x) 
    \label{eqn:constrained-obj} \\
    & \mbox{subject to} \quad a_i^T x = z_i, \quad i=1,\ldots,n, \nonumber
\end{align}
and solve it by ADMM type of operator-splitting methods
(e.g., \cite{LionsMercier79}). 
In fact, as shown in \cite{ChambollePock11}, the batch primal-dual algorithm
\eqref{eqn:CP11-dual-update}-\eqref{eqn:CP11-extrapolation}
is equivalent to a pre-conditioned ADMM
(or inexact Uzawa method; see, e.g., \cite{ZhangBurgerOsher11}).
Several authors \cite{WangBanerjee12,Ouyang13,Suzuki13,ZhongKwok14}
have considered a more general formulation 
than~\eqref{eqn:constrained-obj},
where each $\phi_i$ is a function of the whole vector $z\in\R^n$.
They proposed online or stochastic versions of ADMM which operate on
only one $\phi_i$ in each iteration, and obtained sublinear convergence rates.
However, their cost per iteration is $\order(nd)$ instead of $\order(d)$.

Suzuki \cite{Suzuki14sdca_admm} considered a problem similar 
to~\eqref{eqn:min-primal}, but with more complex regularization function~$g$,
meaning that~$g$ does not have a simple proximal mapping.
Thus primal updates such as step~\eqref{eqn:spdc-minimize} 
or~\eqref{eqn:spdc-minibatch-minimize} in SPDC 
and similar steps in SDCA cannot be computed efficiently.
He proposed an algorithm that combines SDCA \cite{SSZhang13SDCA} and 
ADMM (e.g., \cite{Boyd10ADMM}), and showed that it has linear rate of 
convergence under similar conditions as Assumption~\ref{asmp:smooth-convex}. 
It would be interesting to see if the SPDC method can be extended to 
their setting to obtain accelerated linear convergence rate.

\section{Efficient Implementation with Sparse Data}
\label{sec:sparse-impl}

During each iteration of the SPDC methods, the updates of primal variables
(i.e., computing $x^\suptp$) require full $d$-dimensional vector operations;
see the step~\eqref{eqn:spdc-minimize} of Algorithm~\ref{alg:fast-spdc},
the step~\eqref{eqn:spdc-minibatch-minimize} of Algorithm~\ref{alg:spdc-minibatch}
and the step~\eqref{eqn:spdc-robust-minimize} of Algorithm~\ref{alg:spdc-nonuniform}.
So the computational cost per iteration is $\order(d)$, and this can be
too expensive if the dimension~$d$ is very high. 
In this section, we show how to exploit problem structure to avoid high-dimensional vector operations when the feature vectors~$a_i$ are sparse.
We illustrate the efficient implementation for two popular cases: 
when $g$ is an squared-$\ell_2$ penalty and 
when $g$ is an $\ell_1+\ell_2$ penalty.
For both cases, we show that the computation cost per iteration only depends on
the number of non-zero components of the feature vector.

\subsection{Squared $\ell_2$-norm penalty}\label{sec:l2-norm-penalty}

Suppose that $g(x) =\frac{\lambda}{2}\ltwos{x}^2$. 
For this case, the updates for each coordinate of $x$ are independent
of each other.
More specifically, $x^\suptp$ can be computed coordinate-wise in closed form:
\begin{align}
	x^\suptp_j = \frac{1}{1 + \lambda \tau} (x_j^\supt - \tau \uvec_j^\supt - \tau \Delta \uvec_j), \quad j=1,\ldots,n,
\label{eqn:l2-norm-x-update}
\end{align}
where $\Delta \uvec$ denotes $(\yvec_k^\suptp - \yvec_k^\supt)a_k$ in 
Algorithm~\ref{alg:fast-spdc},
or $\frac{1}{m} \sum_{k\in K}(\yvec_k^\suptp - \yvec_k^\supt)a_k$ in
Algorithm~\ref{alg:spdc-minibatch},
or $(\yvec_k^\suptp - \yvec_k^\supt)a_k /(p_k n)$ in 
Algorithm~\ref{alg:spdc-nonuniform}, and $\Delta \uvec_{j}$ represents
the $j$-th coordinate of $\Delta \uvec$.

Although the dimension~$d$ can be very large, we assume that each feature 
vector~$a_k$ is sparse. 
We denote by $J^\supt$ the set of non-zero coordinates at iteration $t$, 
that is, 
if for some index $k\in K$ picked at iteration $t$ we have $a_{kj}\neq 0$, 
then $j\in J^\supt$. 
If $j\notin J^\supt$, then the SPDC algorithm (and its variants) updates 
$\yvec^\suptp$ without using the value of $x_j^\supt$ or $\xbar_j^\supt$. 
This can be seen from the updates in~\eqref{eqn:spdc-maximize},
\eqref{eqn:spdc-minibatch-maximize} and~\eqref{eqn:spdc-robust-maximize},
where the value of the inner product $\langle a_k, \xbar^\supt\rangle$
does not depend on the value of $\xbar^\supt_j$.
As a consequence, we can delay the updates on $x_j$ and $\xbar_j$ 
whenever $j\notin J^\supt$ without affecting the updates on $y^\supt$,
and process all the missing updates at the next time when $j\in J^\supt$.

Such a delayed update can be carried out very efficiently. 
We assume that $t_0$ is the last time when $j\in J^\supt$, and
$t_1$ is the current iteration where we want to update $x_j$ and $\xbar_j$. 
Since $j\notin J^\supt$ implies $\Delta \uvec_j = 0$, we have
\begin{align}\label{eqn:l2-recursive-formula}
	x^{t+1}_j = \frac{1}{1 + \lambda \tau} (x_j^{(t)} - \tau \uvec_j^{(t)}),
    \qquad t=t_0+1, t_0+2, \dots, t_1-1.
\end{align}
Notice that $\uvec_j^\supt$ is updated only at iterations where $j\in J^\supt$.
The value of $\uvec_j^\supt$ doesn't change during iterations $[t_0+1,t_1]$, so we have 
$\uvec_j^{(t)} \equiv \uvec_j^{(t_0+1)}$ for $t \in [t_0+1, t_1]$. Substituting this equation
into the recursive formula~\eqref{eqn:l2-recursive-formula}, we obtain
\begin{align}
	x^{(t_1)}_j = \frac{1}{(1+\lambda\tau)^{t_1-t_0-1}} \left( x^{(t_0+1)}_j + \frac{\uvec_j^{(t_0+1)}}{\lambda}\right) - \frac{\uvec_j^{(t_0+1)}}{\lambda}.
	\label{eqn:l2-norm-x-efficient-update}
\end{align}
The update~\eqref{eqn:l2-norm-x-efficient-update} takes $\order(1)$ time to compute. Using the same formula,
we can compute $x^{(t_1-1)}_j$ and subsequently compute $\xbar^{(t_1)}_j = x^{(t_1)}_j + \theta(x^{(t_1)}_j - x^{(t_1-1)}_j)$.
Thus, the computational complexity of a single iteration in SPDC 
is proportional to $| J^\supt |$, independent of the dimension~$d$.

\subsection{$(\ell_1 + \ell_2)$-norm penalty}
\label{sec:l1+l2-penalty}

Suppose that $g(x) = \lambda_1 \lones{x} + \frac{\lambda_2}{2}\ltwos{x}^2$. 
Since both the $\ell_1$-norm and the squared $\ell_2$-norm are decomposable, 
the updates for each coordinate of $x^\suptp$ are independent.
More specifically, 
\begin{align}\label{eqn:l1l2-norm-x-update-original}
x_j^\suptp &= \arg\min_{\alpha\in\R} \left\{\lambda_1 |\alpha| + \frac{\lambda_2 \alpha^2}{2} 
+ (\uvec_j^\supt + \Delta \uvec_j)\alpha +  \frac{(\alpha-x_j^\supt)^2}{2\tau} \right\},
\end{align}
where $\Delta \uvec_j$ follows the definition in
Section~\ref{sec:l2-norm-penalty}.
If $j\notin J^\supt$, then $\Delta \uvec_j = 0$ and equation~\eqref{eqn:l1l2-norm-x-update-original} can be simplified as
\begin{align}\label{eqn:l1l2-norm-x-update-zero-original}
x_j^\suptp &= \left\{ 
	\begin{array}{ll}
		\frac{1}{1 + \lambda_2 \tau} (x_j^\supt - \tau \uvec^\supt_j - \tau\lambda_1) & \mbox{if } x_j^\supt - \tau \uvec^\supt_j > \tau\lambda_1,\\
		\frac{1}{1 + \lambda_2 \tau} (x_j^\supt - \tau \uvec^\supt_j + \tau\lambda_1) & \mbox{if } x_j^\supt - \tau \uvec^\supt_j < - \tau\lambda_1,\\
		0 & \mbox{otherwise.}
	\end{array}
\right.
\end{align}

Similar to the approach of Section~\ref{sec:l2-norm-penalty}, we delay the update of $x_j$ until
 $j\in J^\supt$. We assume $t_0$ to be the last iteration when $j\in J^\supt$, and let $t_1$ be the current
iteration when we want to update $x_j$. During iterations $[t_0+1,t_1]$, the value of $\uvec^\supt_j$ doesn't change, so we have
$\uvec_j^{(t)} \equiv \uvec_j^{(t_0+1)}$ for $t \in [t_0+1, t_1]$. 
Using equation~\eqref{eqn:l1l2-norm-x-update-zero-original} and the invariance of $\uvec_j^{(t)}$ for $t\in[t_0+1, t_1]$, we have an $\order(1)$
time algorithm to calculate $x_j^{(t_1)}$, which we detail in Appendix~\ref{sec:l1l2-update-detail}.
The vector $\xbar^{(t_1)}_j$ can be updated by the same algorithm since it is a linear combination of $x_j^{(t_1)}$ and $x_j^{(t_1-1)}$. 
As a consequence, the computational complexity of each iteration in SPDC 
is proportional to $|J^\supt|$, independent of the dimension~$d$.


\section{Experiments}
\label{sec:experiments}

In this section, we compare the basic SPDC method 
(Algorithm~\ref{alg:fast-spdc}) with several state-of-the-art optimization
algorithms for solving problem~\eqref{eqn:min-primal}.
They include two batch-update algorithms: 
the accelerated full gradient (FAG) method~\cite[Section~2.2]{Nesterov04book},
and the limited-memory quasi-Newton method L-BFGS 
\cite[Section~7.2]{NocedalWrightbook}).
For the AFG method, we adopt an adaptive line search scheme
(e.g., \cite{Nesterov13composite}) to improve its efficiency.
For the L-BFGS method, we use the memory size~30 as suggested by
\cite{NocedalWrightbook}.
We also compare SPDC with three stochastic algorithms:
the stochastic average gradient (SAG) method
\cite{LeRouxSchmidtBach12,SchmidtLeRouxBach13},
the stochastic dual coordinate descent (SDCA) method~\cite{SSZhang13SDCA} and the accelerated
stochastic dual coordinate descent (ASDCA) method~\cite{SSZhang13acclSDCA}. 
We conduct experiments on a synthetic dataset and three real datasets.

\subsection{Ridge regression with synthetic data}

We first compare SPDC with other algorithms on a simple quadratic problem 
using synthetic data. 
We generate $n=500$ i.i.d.~training examples $\{a_i,b_i\}_{i=1}^n$ 
according to the model
\[
    b = \langle a, x^*\rangle + \varepsilon, \quad 
    a \sim \mathcal{N}(0,\Sigma), \quad \varepsilon \sim \mathcal{N}(0,1),
\]
where $a\in \R^{d}$ and $d=500$, and $x^*$ is the all-ones vector. 
To make the problem ill-conditioned, the covariance matrix $\Sigma$ 
is set to be diagonal with $\Sigma_{jj} = j^{-2}$, for $j=1,\ldots,d$. 
Given the set of examples $\{a_i,b_i\}_{i=1}^n$, 
we then solved a standard ridge regression problem 
\begin{align*}
    \minimize_{x\in \R^{d}} ~\left\{ P(x) \eqdef \frac{1}{n}\sum_{i=1}^n \frac{1}{2}(a_i^T x - b_i)^2 + \frac{\lambda}{2}\ltwos{x}^2 \right\}.
\end{align*}
In the form of problem~\eqref{eqn:min-primal}, 
we have $\phi_i(z) = z^2/2$ and $g(x) = (1/2)\ltwos{x}^2$. 
As a consequence, the derivative of $\phi_i$ is
$1$-Lipschitz continuous and $g$ is $\lambda$-strongly convex.

\begin{figure}[t]
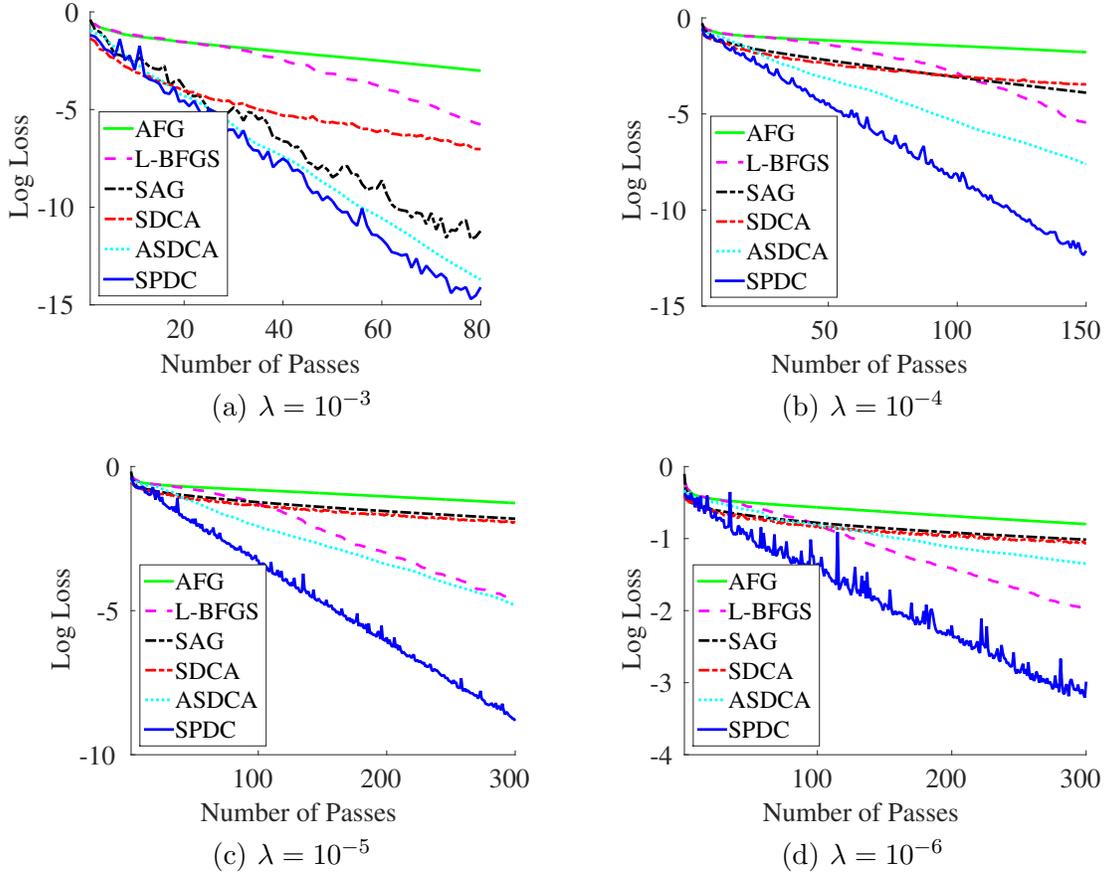

\centering
\begin{tabular}{cc}
\includegraphics[width = 0.4\textwidth]{syn-1e-3} \hspace{1cm} &
\includegraphics[width = 0.4\textwidth]{syn-1e-4} \\
(a) $\lambda = 10^{-3}$ & (b) $\lambda = 10^{-4}$\\[2ex]
\includegraphics[width = 0.4\textwidth]{syn-1e-5} &
\includegraphics[width = 0.4\textwidth]{syn-1e-6} \\
(c) $\lambda = 10^{-5}$ & (d) $\lambda = 10^{-6}$
\end{tabular}
\vspace{2ex}
\caption{Comparing SPDC with other methods on synthetic data, with
the regularization coefficient
$\lambda\in\{10^{-3}, 10^{-4}, 10^{-5}, 10^{-6}\}$. 
The horizontal axis is the number of passes through the entire dataset,
and the vertical axis is the logarithmic gap $\log(P(x^{(T)}) - P(\xhat))$.}
\label{fig:synthetic-compare}
\end{figure}

We evaluate the algorithms by the logarithmic optimality gap 
$\log(P(x^{(t)}) - P(\xhat))$, where $x^{(t)}$ is the output of
the algorithms after $t$ passes over the entire dataset,
and $\xhat$ is the global minimum.
When the regularization coefficient is relatively large, e.g.,
$\lambda=10^{-1}$ or $10^{-2}$,
the problem is well-conditioned and we observe fast convergence of 
the stochastic algorithms SAG, SDCA, ASDCA and SPDC, which are substantially  faster than the two batch methods AFG and L-BFGS. 

Figure~\ref{fig:synthetic-compare} shows the convergence 
of the five different algorithms 
when we varied~$\lambda$ from $10^{-3}$ to $10^{-6}$.
As the plot shows, when the condition number is greater than $n$,
the SPDC algorithm also converges substantially faster than the other 
two stochastic methods SAG and SDCA.
It is also notably faster than L-BFGS.
These results support our theory that SPDC enjoys a faster convergence rate on ill-conditioned problems.
In terms of their batch complexities, SPDC is up to $\sqrt{n}$ times faster than AFG, and $(\lambda n)^{-1/2}$ times faster than SAG and SDCA. 

Theoretically, ASDCA enjoys the same batch complexity as SPDC up to a 
multiplicative constant factor. 
Figure~\ref{fig:synthetic-compare} shows that the empirical performance of SPDC
is substantially faster that of ASDCA for small~$\lambda$. 
This may due to the fact that ASDCA follows an inner-outer iteration procedure,
while SPDC is a single-loop algorithm, 
explaining why it is empirically more efficient.

\subsection{Binary classification with real data}

\begin{table}[t]
\centering
\begin{tabular}{|c|r|r|r|}
  \hline
  Dataset name  & number of samples~$n$ & number of features~$d$ & sparsity \\\hline
  Covtype & 581,012 & 54  & 22\% \\
  RCV1 & 20,242 & 47,236 & 0.16\% \\
  News20 & 19,996 & 1,355,191  & 0.04\% \\
  \hline
\end{tabular}
\caption{Characteristics of three real datasets obtained from LIBSVM data~\cite{LIBSVMdata}.}
\label{tab:data-summary}
\end{table}

Finally we show the results of solving the binary classification problem on 
three real datasets.
The datasets are obtained from LIBSVM data~\cite{LIBSVMdata} and summarized
in Table~\ref{tab:data-summary}. 
The three datasets are selected to reflect different relations between 
the sample size $n$ and the feature dimensionality $d$, which cover
$n \gg d$ (Covtype), $n \approx d$ (RCV1) and $n \ll d$ (News20).
For all tasks, the data points take the form of $(a_i,b_i)$,
where $a_i\in \R^d$ is the feature vector, and $b_i\in\{-1,1\}$ is 
the binary class label.
Our goal is to minimize the regularized empirical risk:
\begin{align*}
	P(x) = \frac{1}{n} \sum_{i=1}^n \phi_i ( a_i^{T} x) + \frac{\lambda}{2}\ltwos{x}^2 \quad \mbox{where}\quad 
		\phi_i(z) = \left\{\begin{array}{ll}
            0 & \mbox{if $b_i z \geq 1$}\\[0.5ex]
        \frac{1}{2} - b_i z & \mbox{if $b_i z \leq 0$}\\[0.5ex]
		\frac{1}{2}(1-b_i z)^2 & \mbox{otherwise}.
	\end{array}
	\right.
\end{align*}
Here, $\phi_i$ is the smoothed hinge loss (see, e.g., \cite{SSZhang13SDCA}).
It is easy to verify that  the conjugate function of $\phi_i$ is 
$\phistar_i(\beta) = b_i\beta + \frac{1}{2}\beta^2$
for $b_i\beta \in [-1,0]$ and $\infty$ otherwise.

\begin{figure}[p]
  \psfrag{Log Loss}[bc]{}
  \psfrag{Number of Passes}[tc]{}
\begin{tabular}{c|ccc}
$\lambda$ & RCV1 & Covtype & News20 \\\hline
&&&\\
$10^{-4}$&
\raisebox{-.5\height}{\includegraphics[width = 0.28\textwidth]{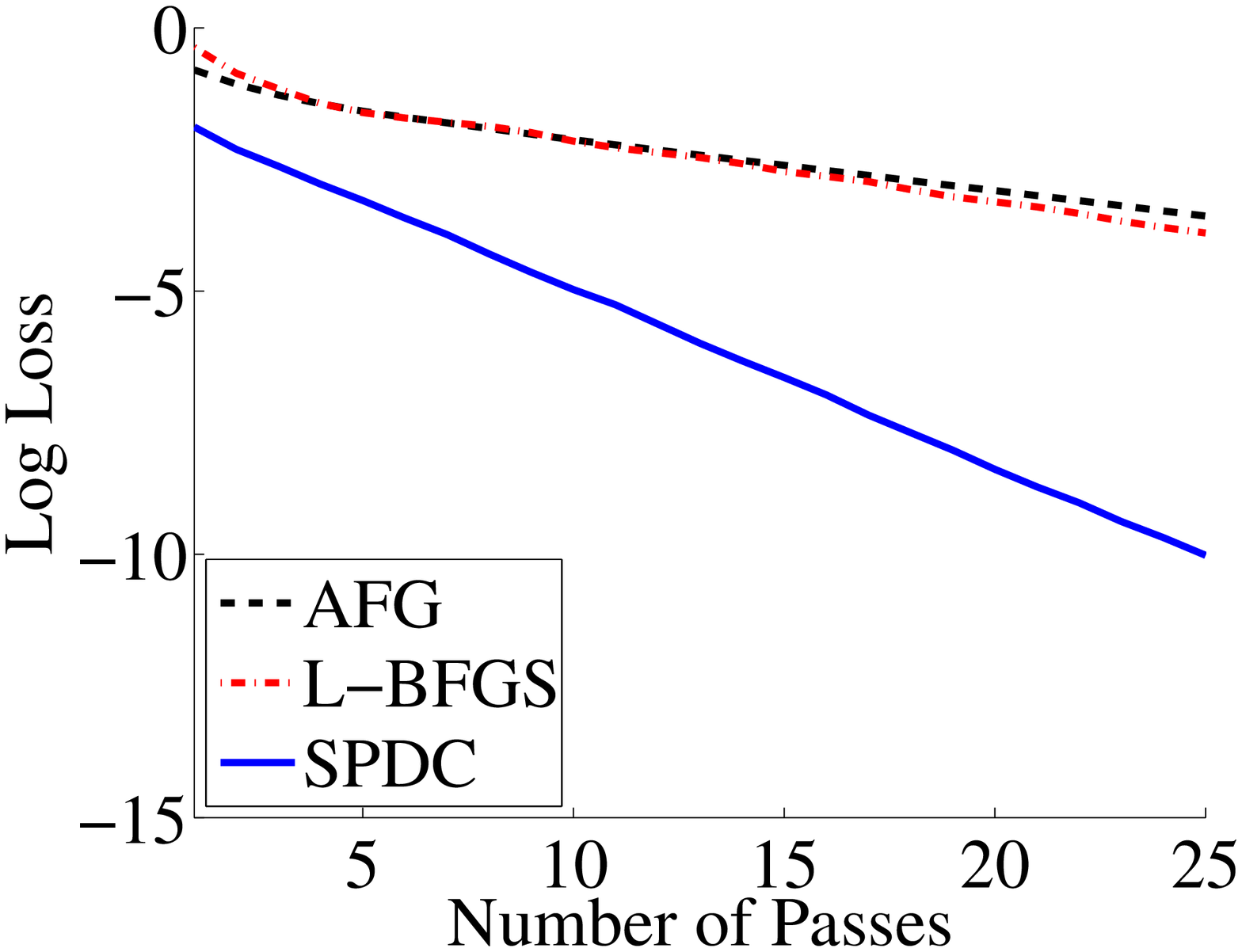}} &
\raisebox{-.5\height}{\includegraphics[width = 0.28\textwidth]{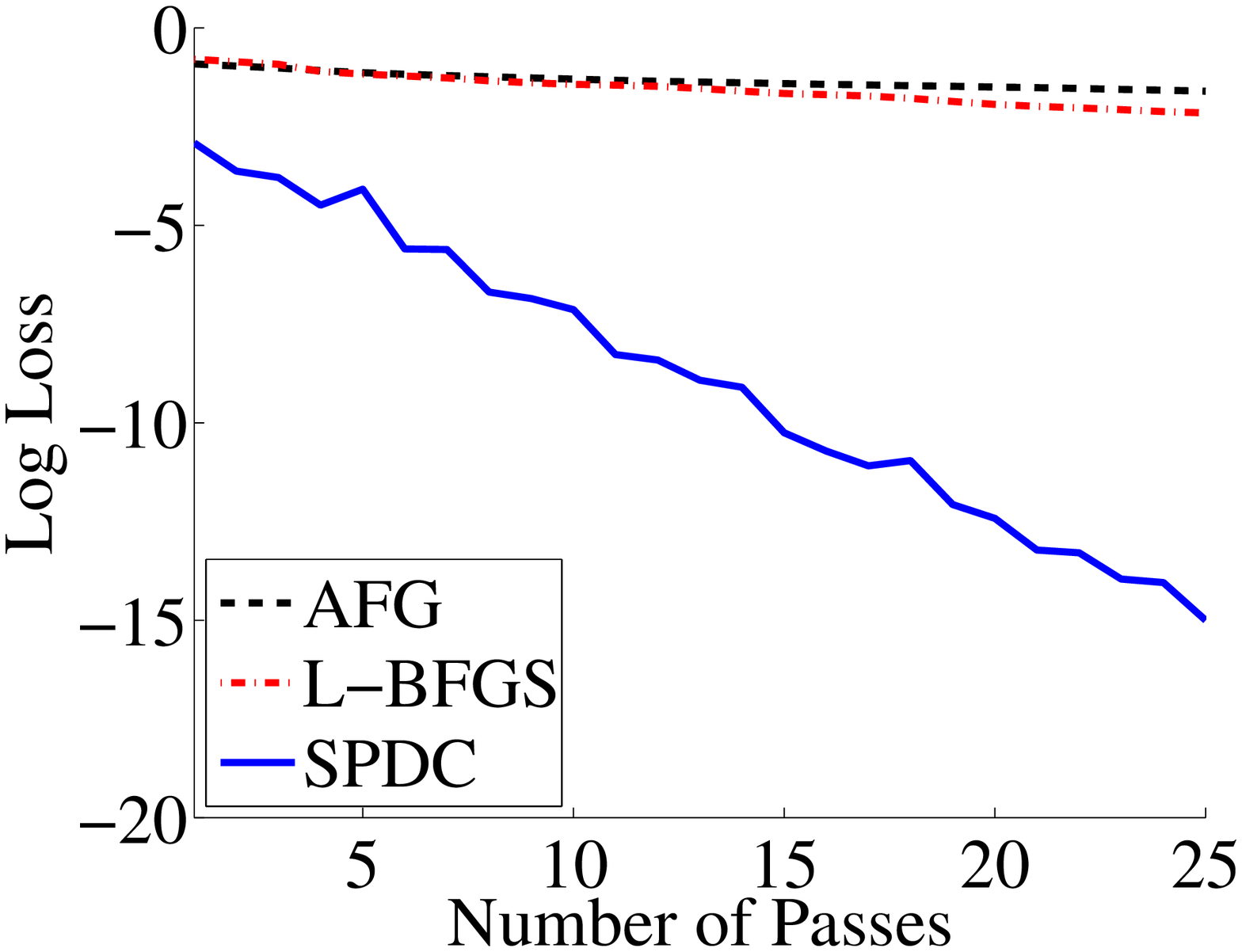}} &
\raisebox{-.5\height}{\includegraphics[width = 0.28\textwidth]{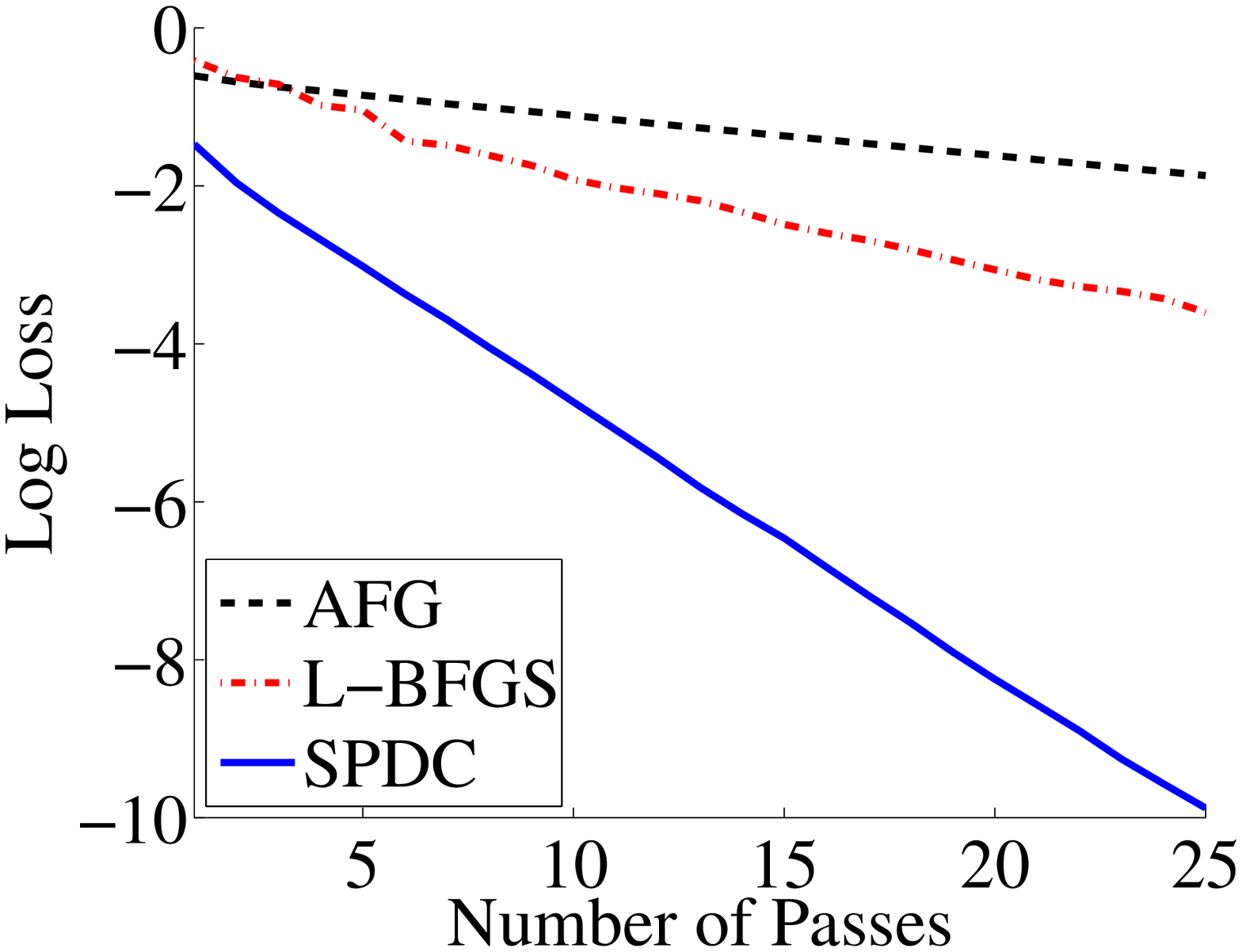}} \\
$10^{-5}$&
\raisebox{-.5\height}{\includegraphics[width = 0.28\textwidth]{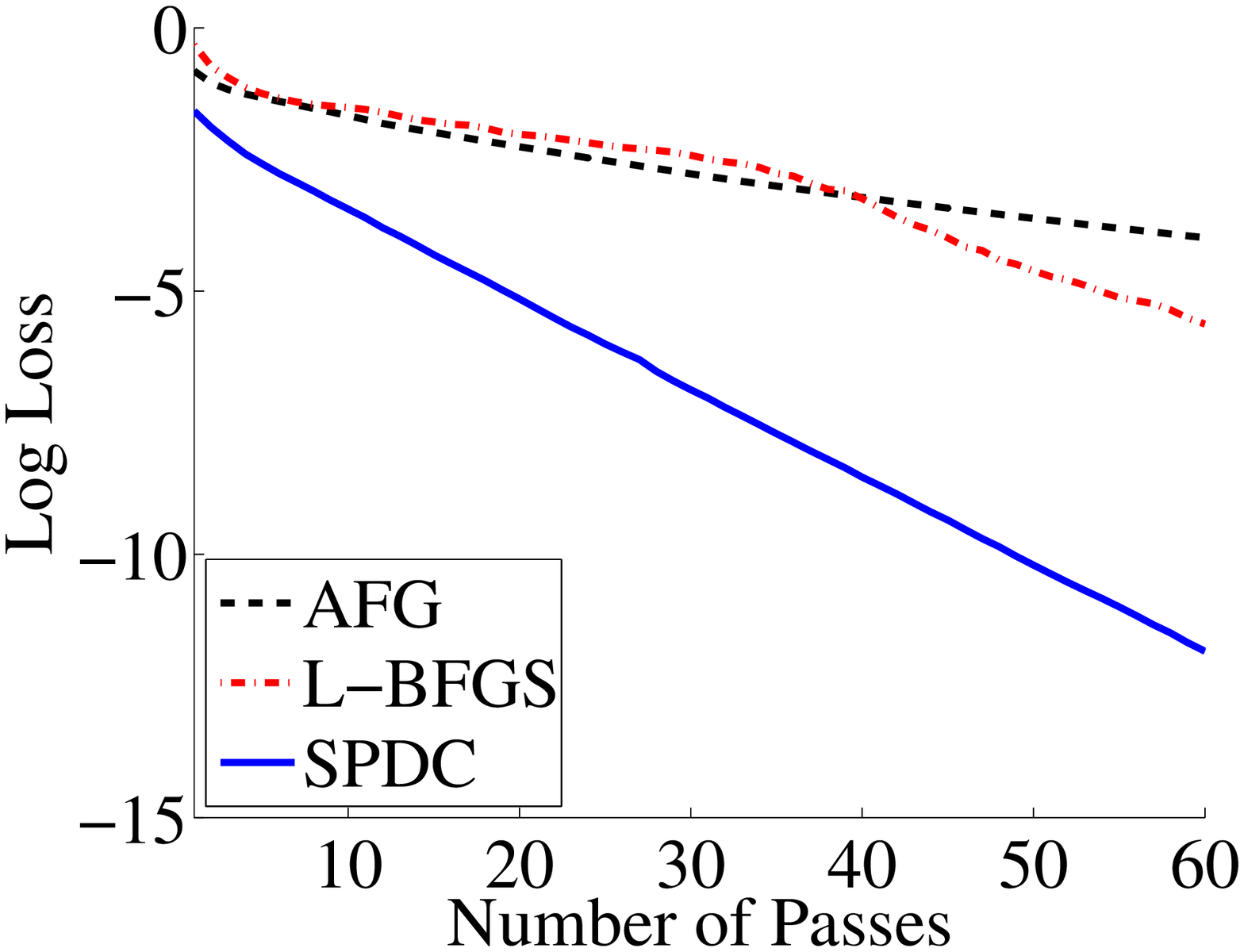}} &
\raisebox{-.5\height}{\includegraphics[width = 0.28\textwidth]{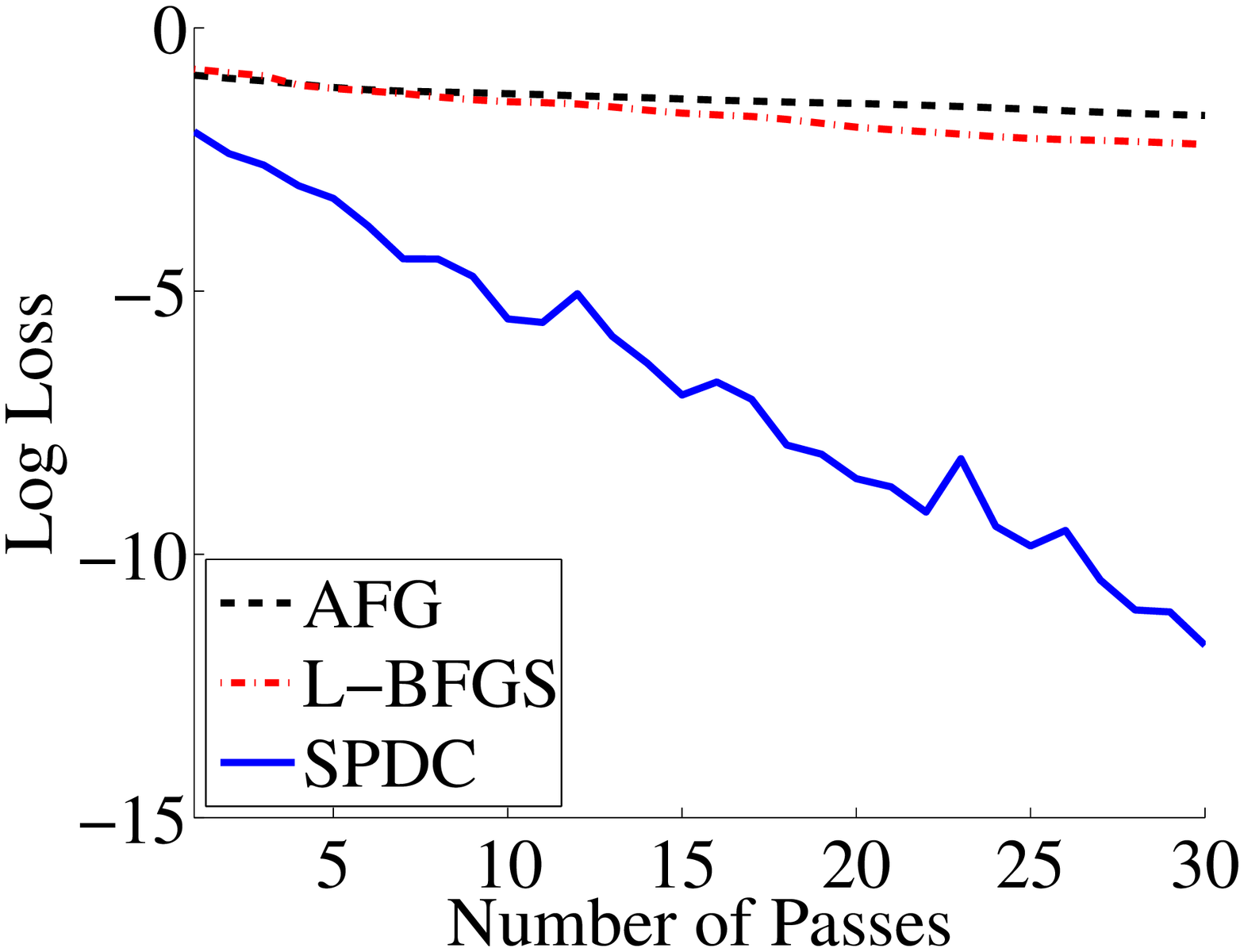}} &
\raisebox{-.5\height}{\includegraphics[width = 0.28\textwidth]{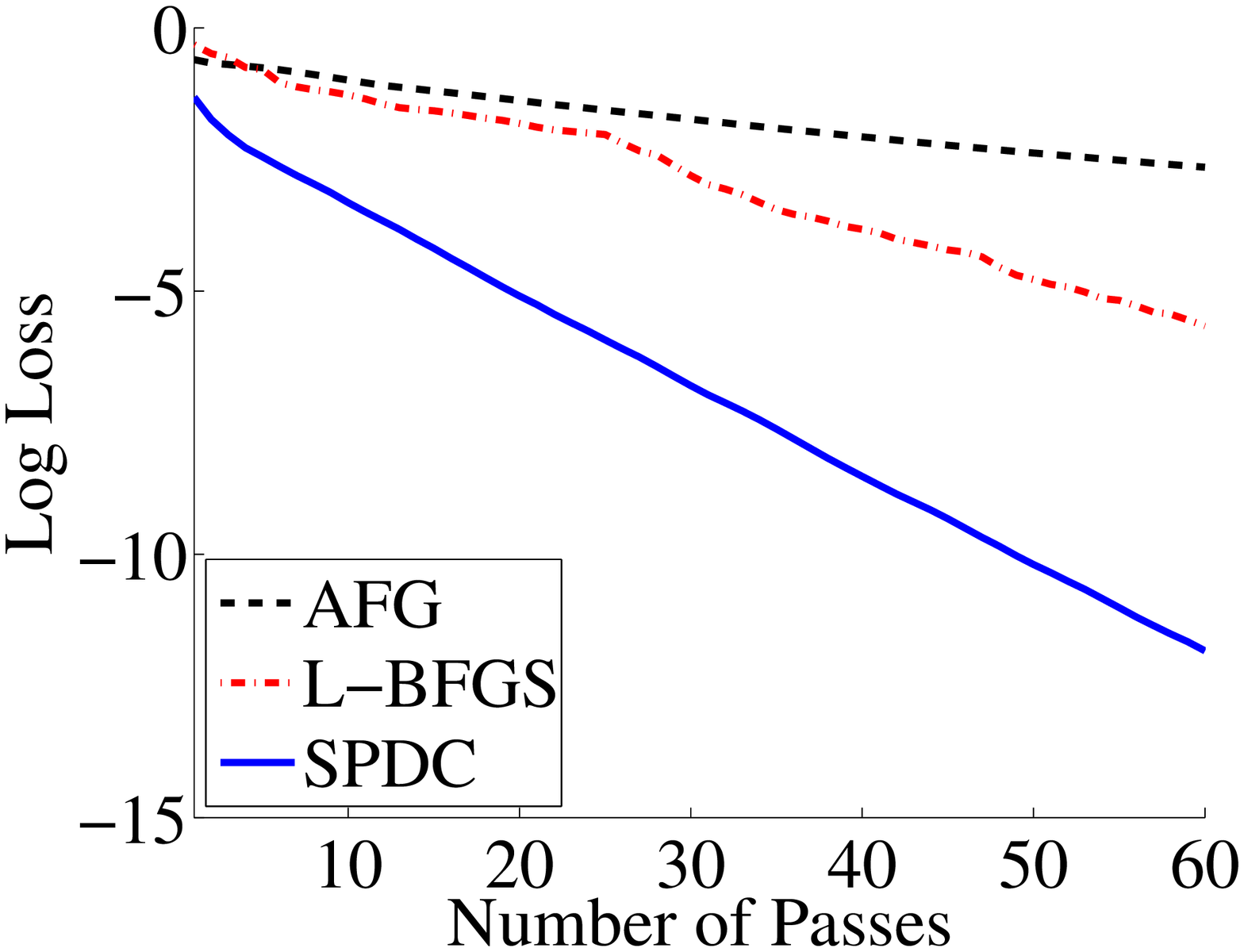}} \\
$10^{-6}$&
\raisebox{-.5\height}{\includegraphics[width = 0.28\textwidth]{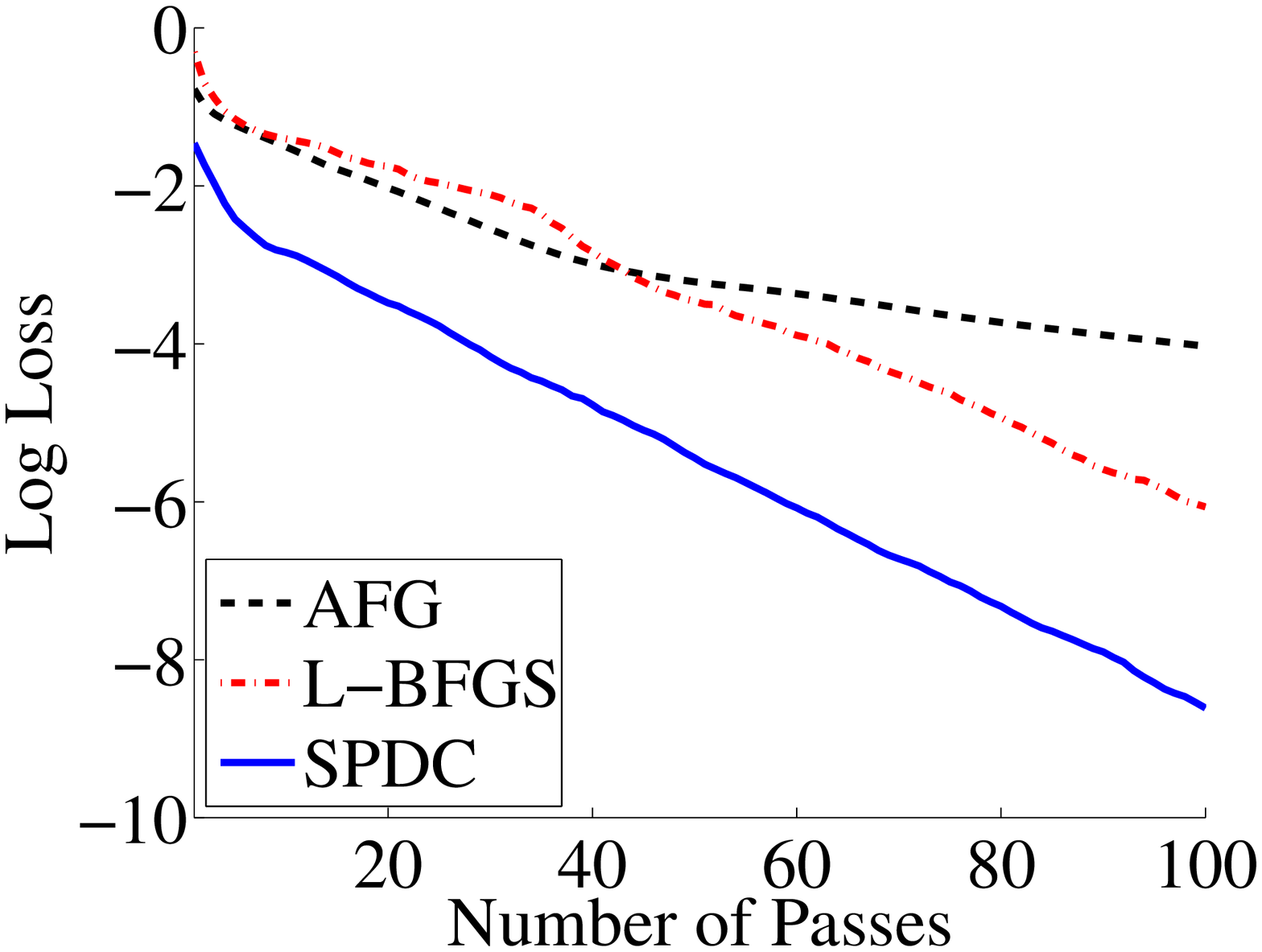}} &
\raisebox{-.5\height}{\includegraphics[width = 0.28\textwidth]{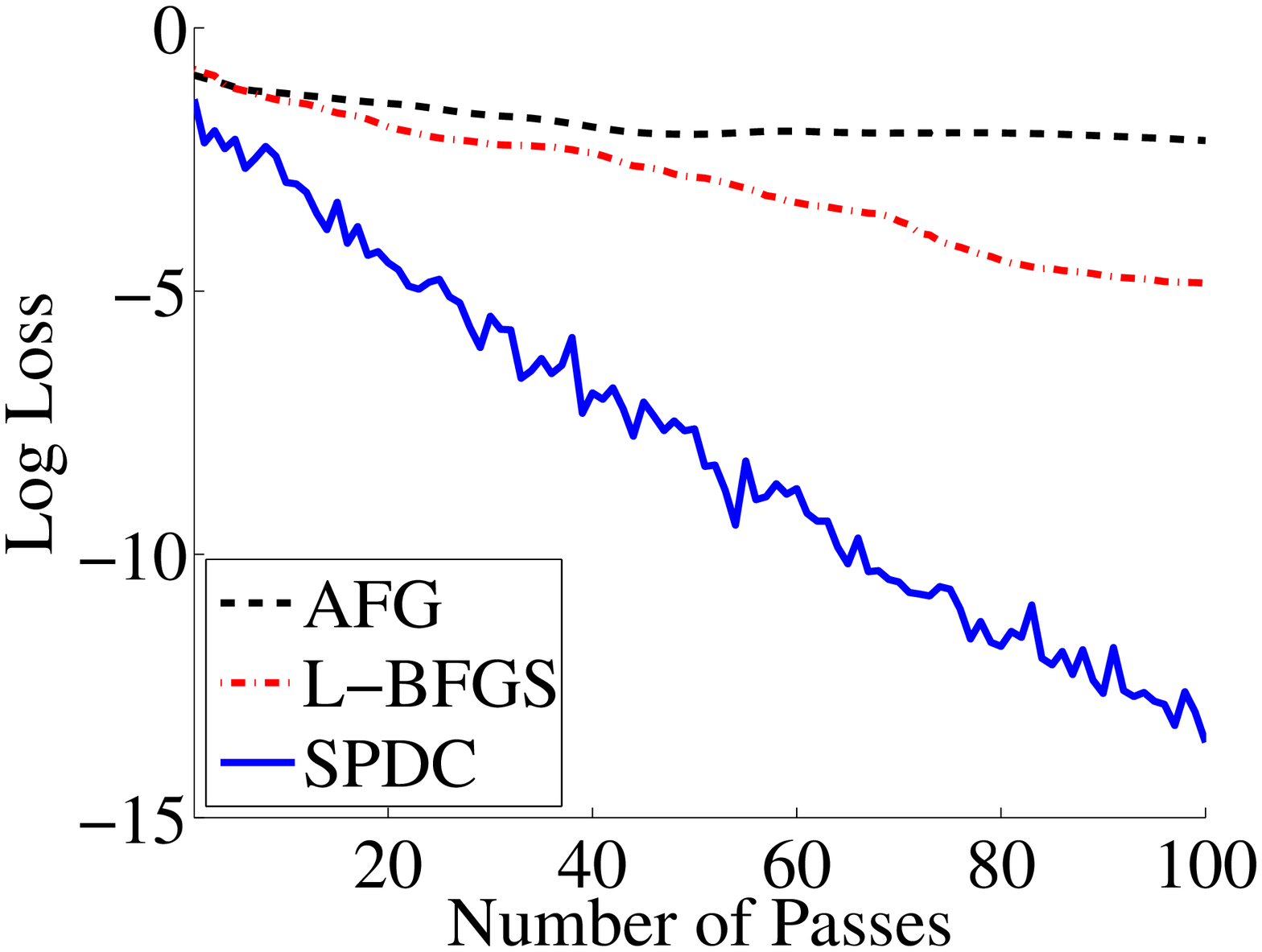}} &
\raisebox{-.5\height}{\includegraphics[width = 0.28\textwidth]{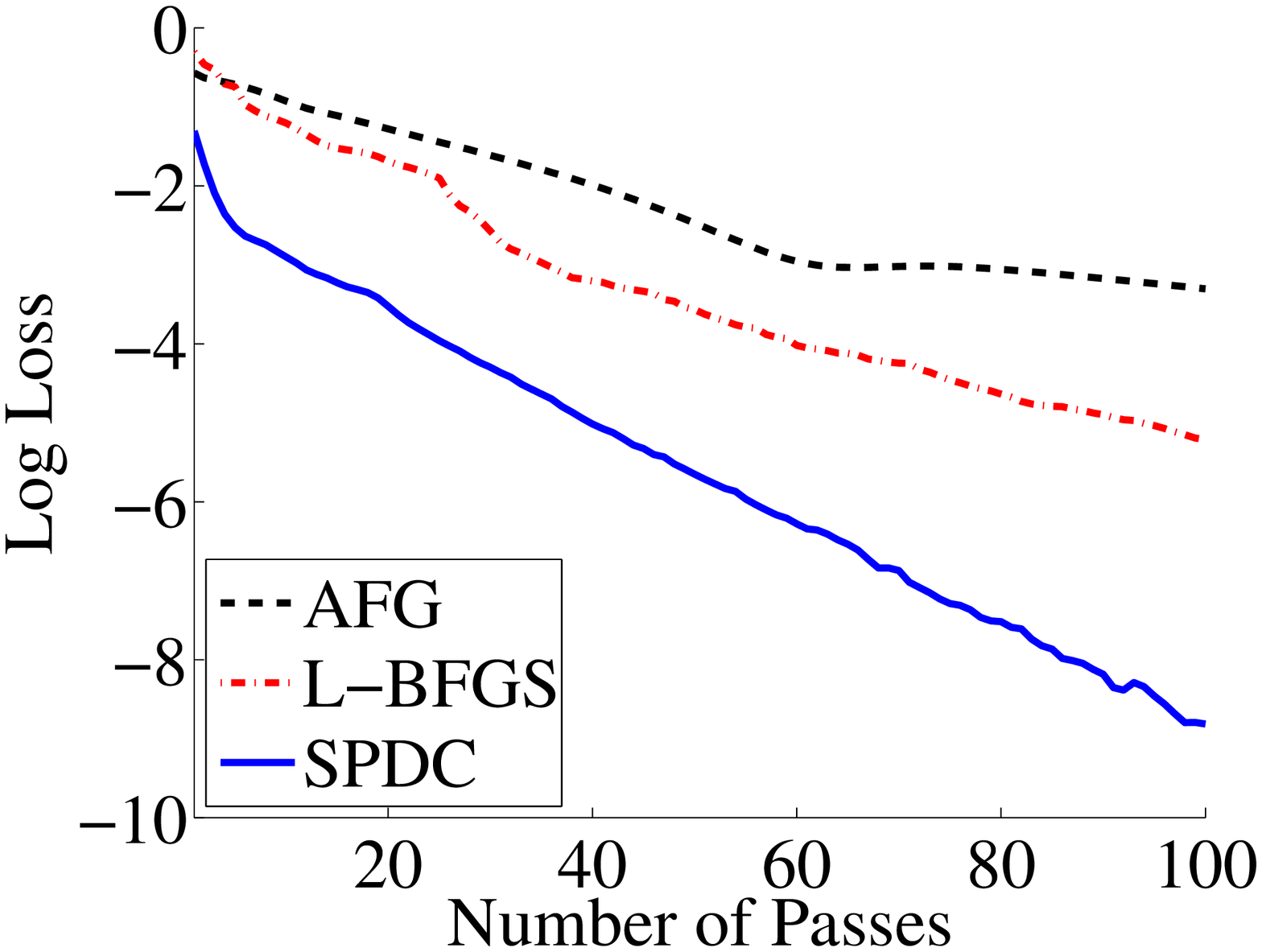}} \\
$10^{-7}$&
\raisebox{-.5\height}{\includegraphics[width = 0.28\textwidth]{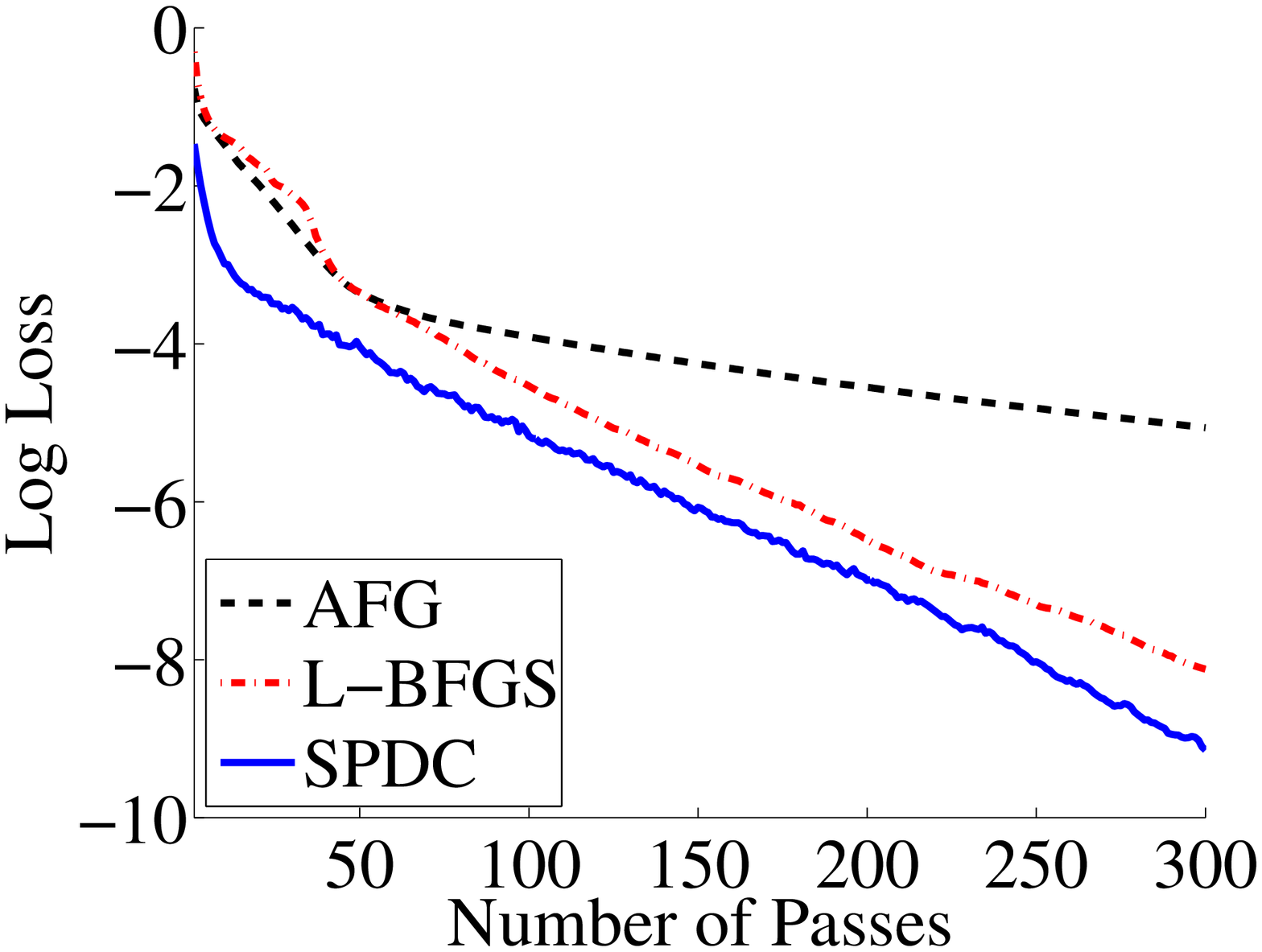}} &
\raisebox{-.5\height}{\includegraphics[width = 0.28\textwidth]{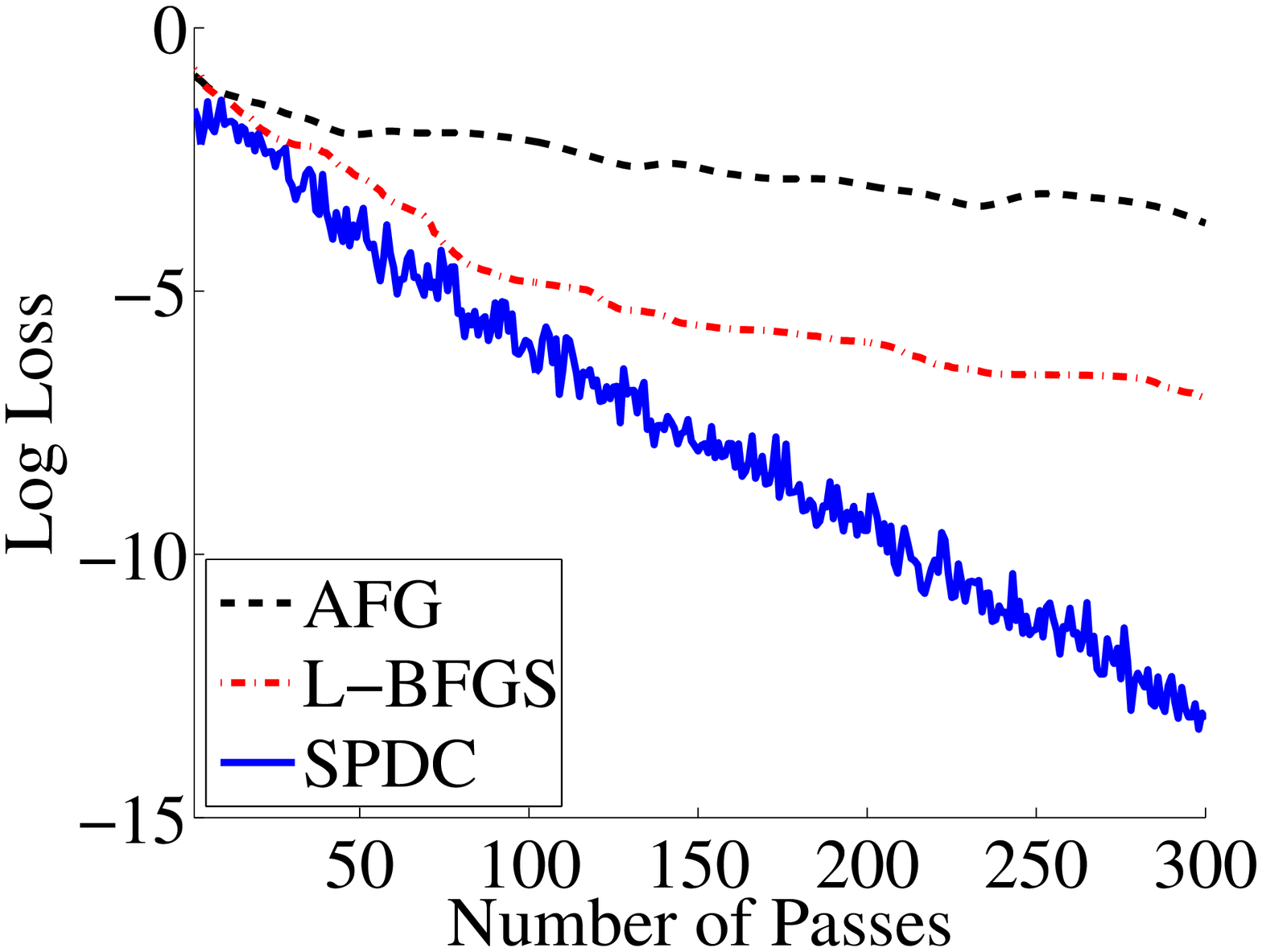}} &
\raisebox{-.5\height}{\includegraphics[width = 0.28\textwidth]{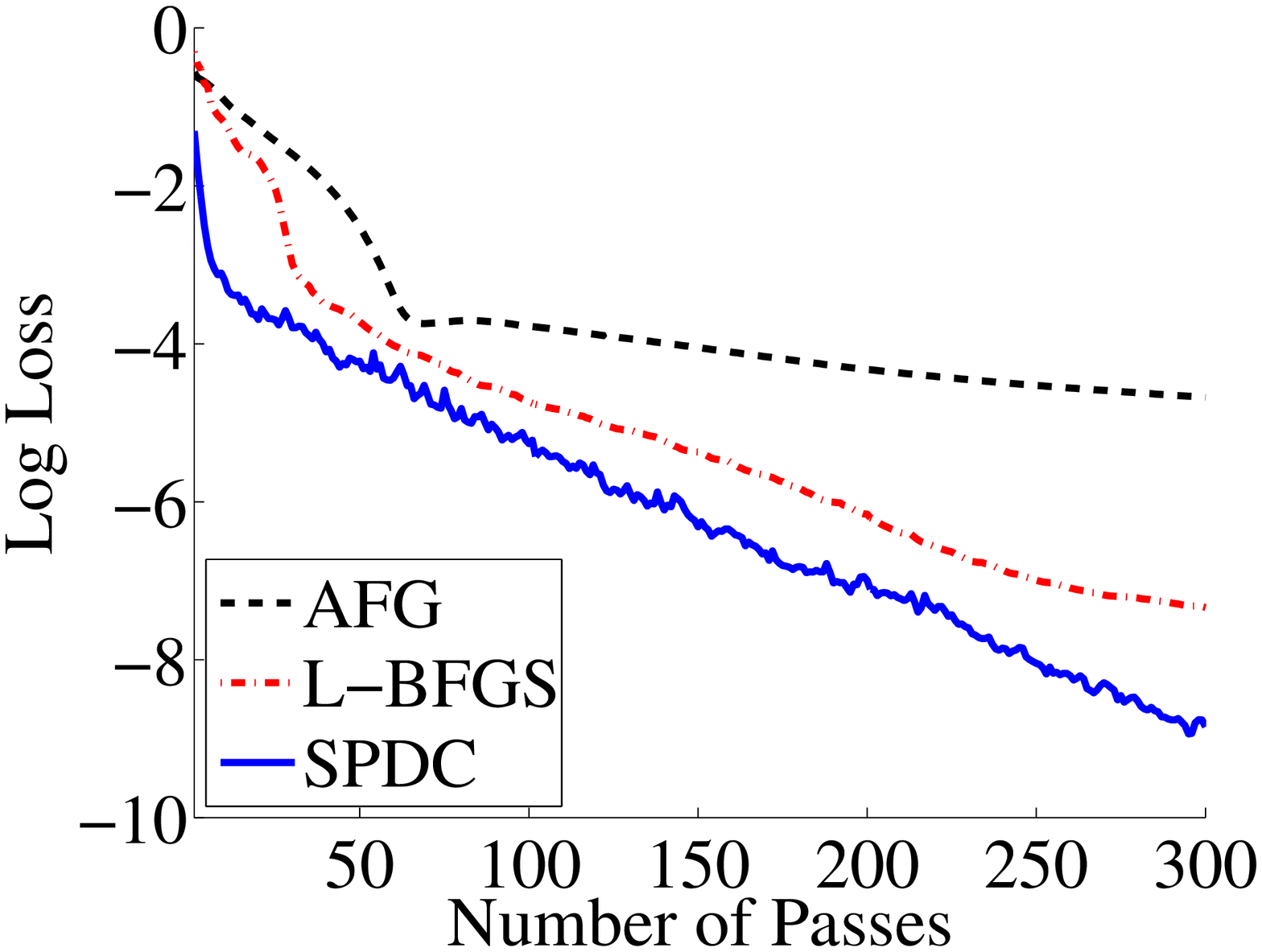}} \\
$10^{-8}$&
\raisebox{-.5\height}{\includegraphics[width = 0.28\textwidth]{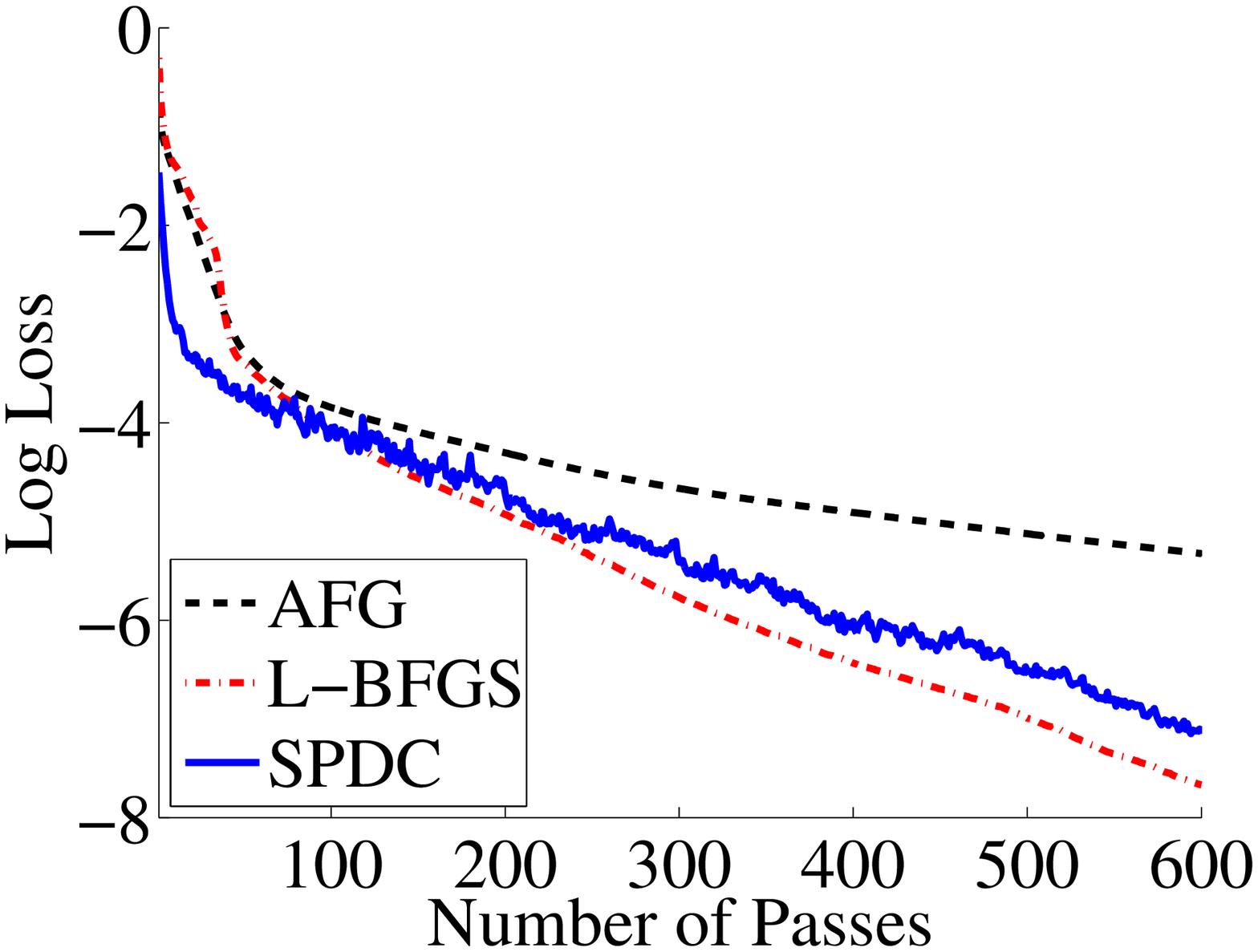}} &
\raisebox{-.5\height}{\includegraphics[width = 0.28\textwidth]{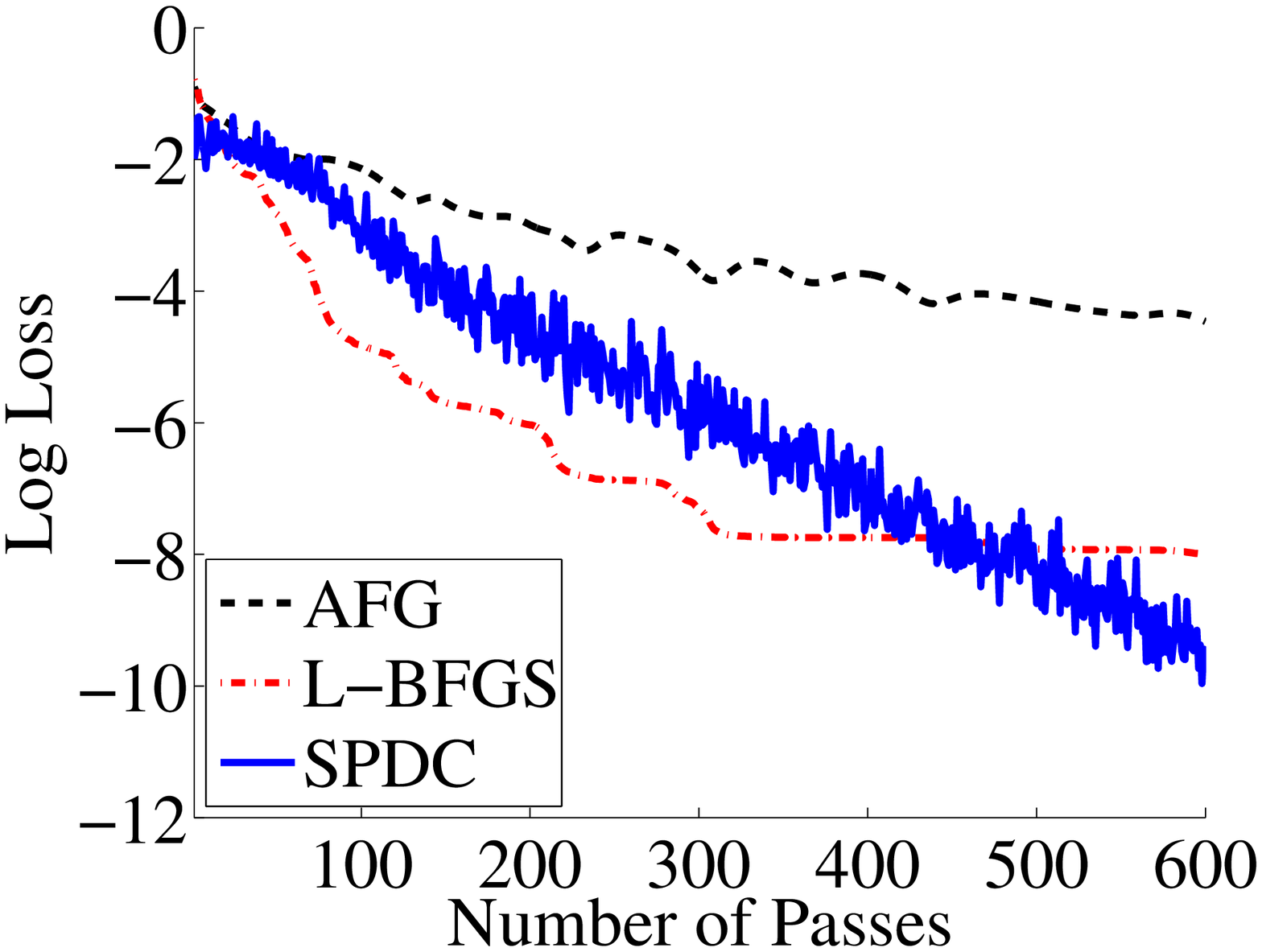}} &
\raisebox{-.5\height}{\includegraphics[width = 0.28\textwidth]{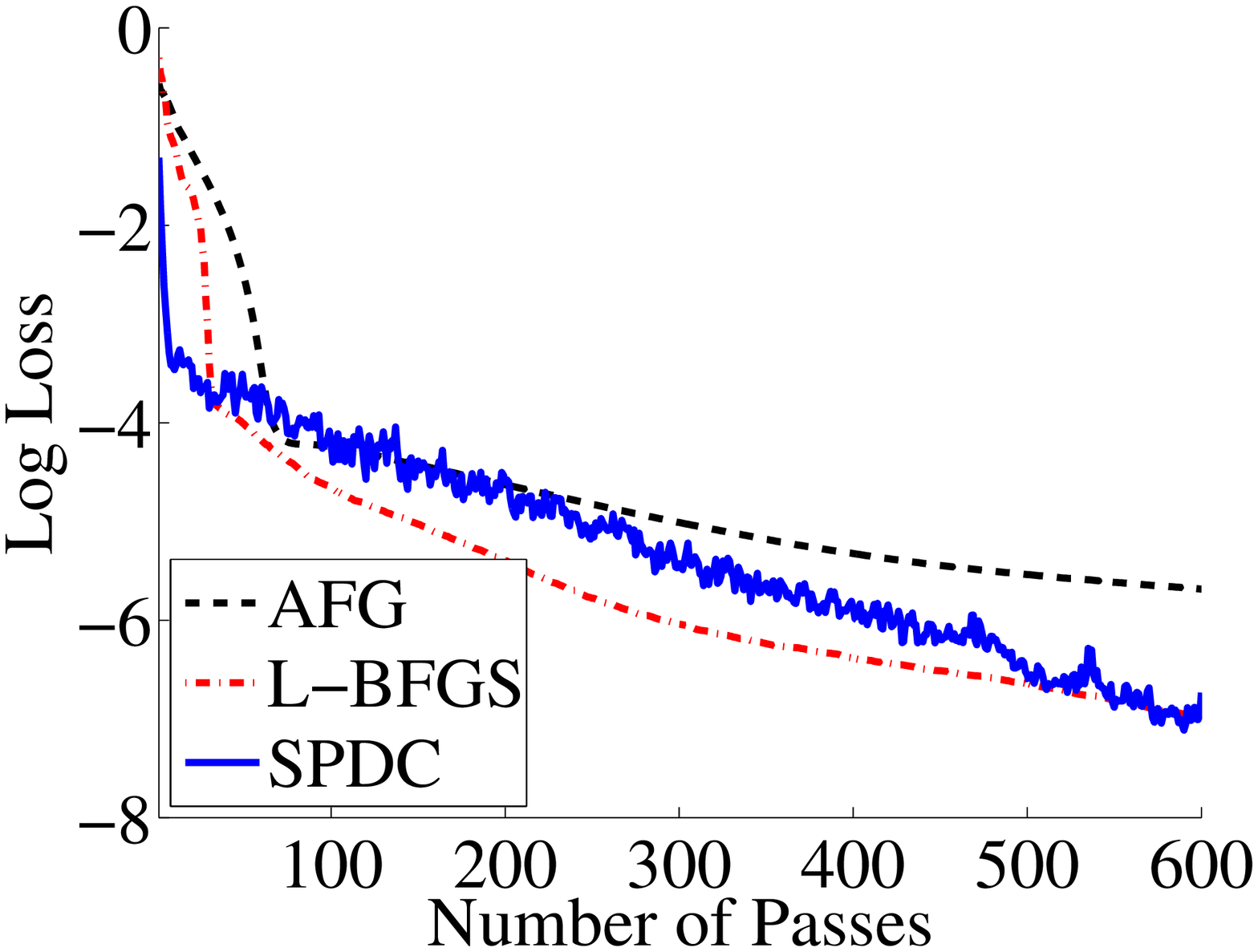}} \\
\end{tabular}
\vspace{2ex}
\caption{Comparing SPDC with AFG and L-BFGS on three real datasets with smoothed
    hinge loss. 
The horizontal axis is the number of passes through the entire dataset, and
the vertical axis is the logarithmic optimality gap $\log(P(x^{(t)}) - P(\xhat))$. 
The SPDC algorithm is faster than the two batch methods when~$\lambda$ 
is relatively large.}
\label{fig:real-compare-batch}
\end{figure}

\begin{figure}[p]
  \psfrag{Log Loss}[bc]{}
  \psfrag{Number of Passes}[tc]{}
\begin{tabular}{c|ccc}
$\lambda$ & RCV1 & Covtype & News20 \\\hline
&&&\\
$10^{-4}$&
\raisebox{-.5\height}{\includegraphics[width = 0.28\textwidth]{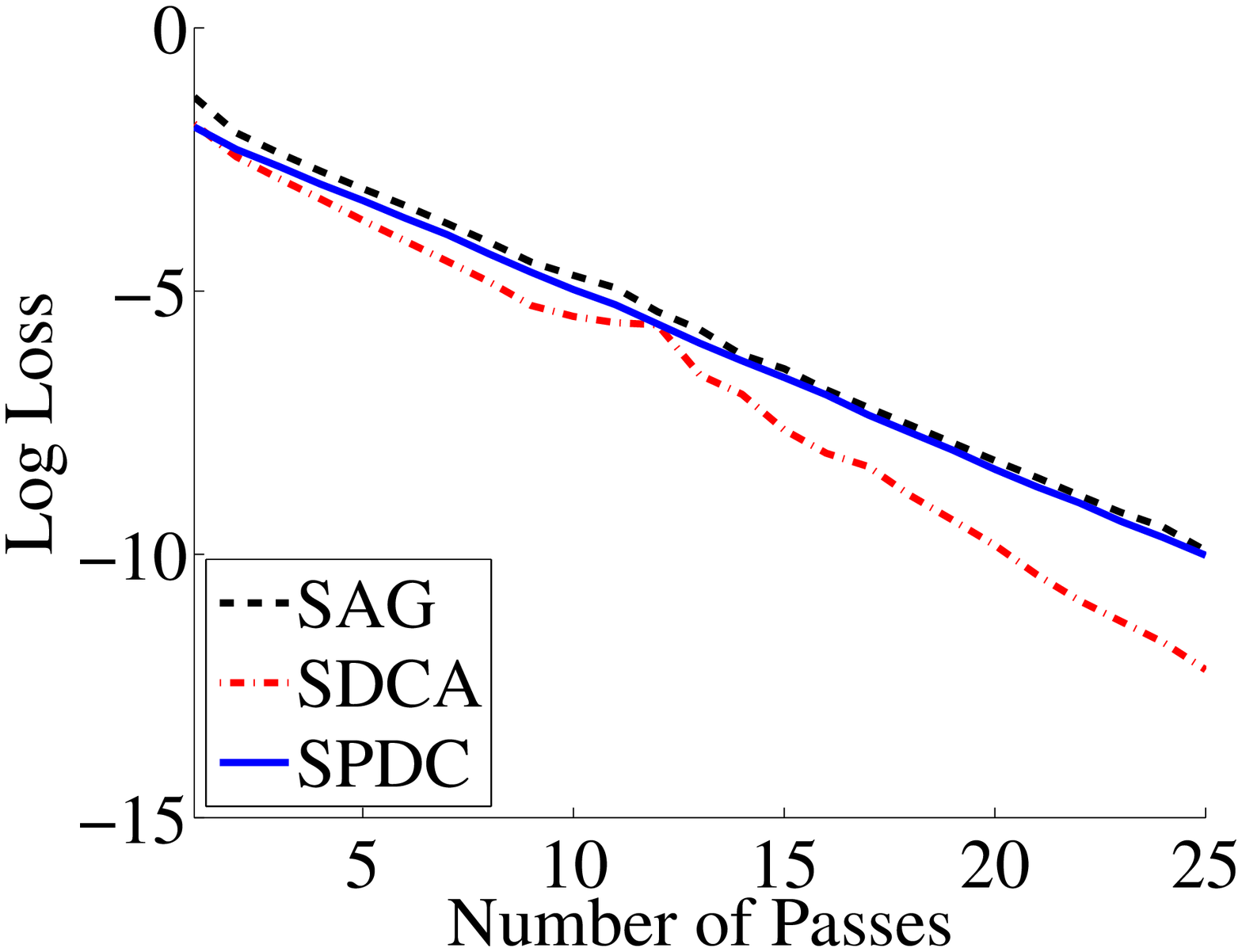}} &
\raisebox{-.5\height}{\includegraphics[width = 0.28\textwidth]{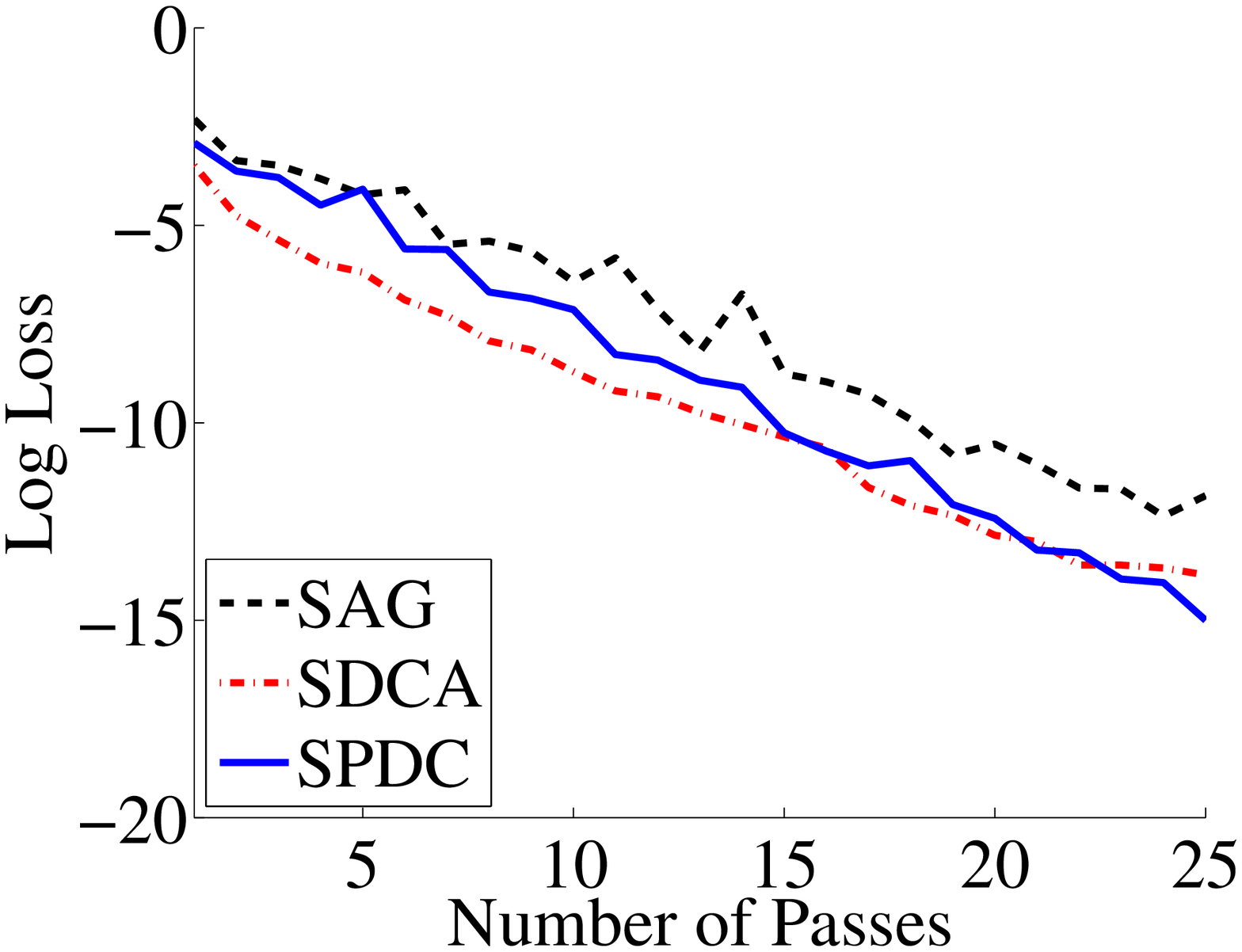}} &
\raisebox{-.5\height}{\includegraphics[width = 0.28\textwidth]{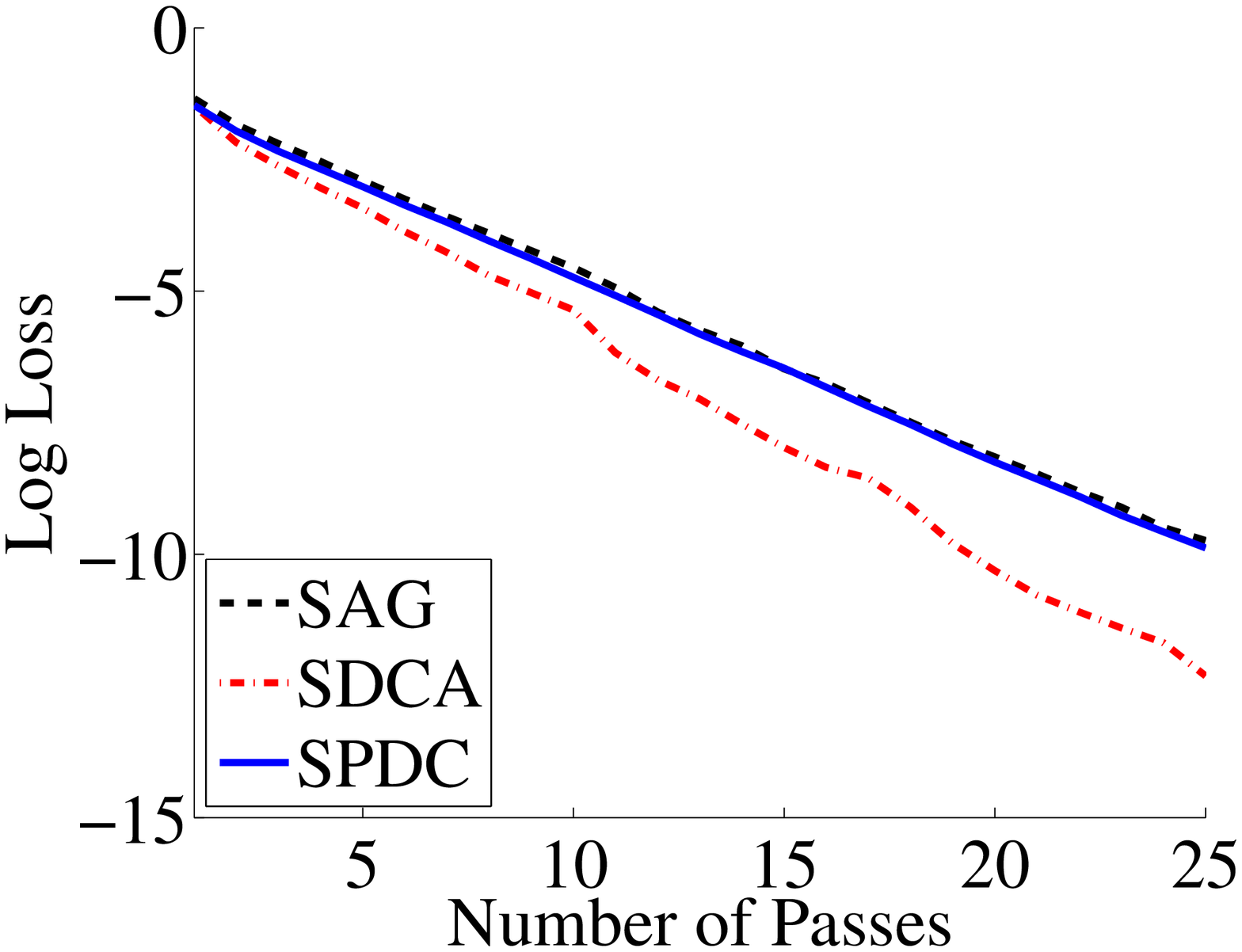}} \\
$10^{-5}$&
\raisebox{-.5\height}{\includegraphics[width = 0.28\textwidth]{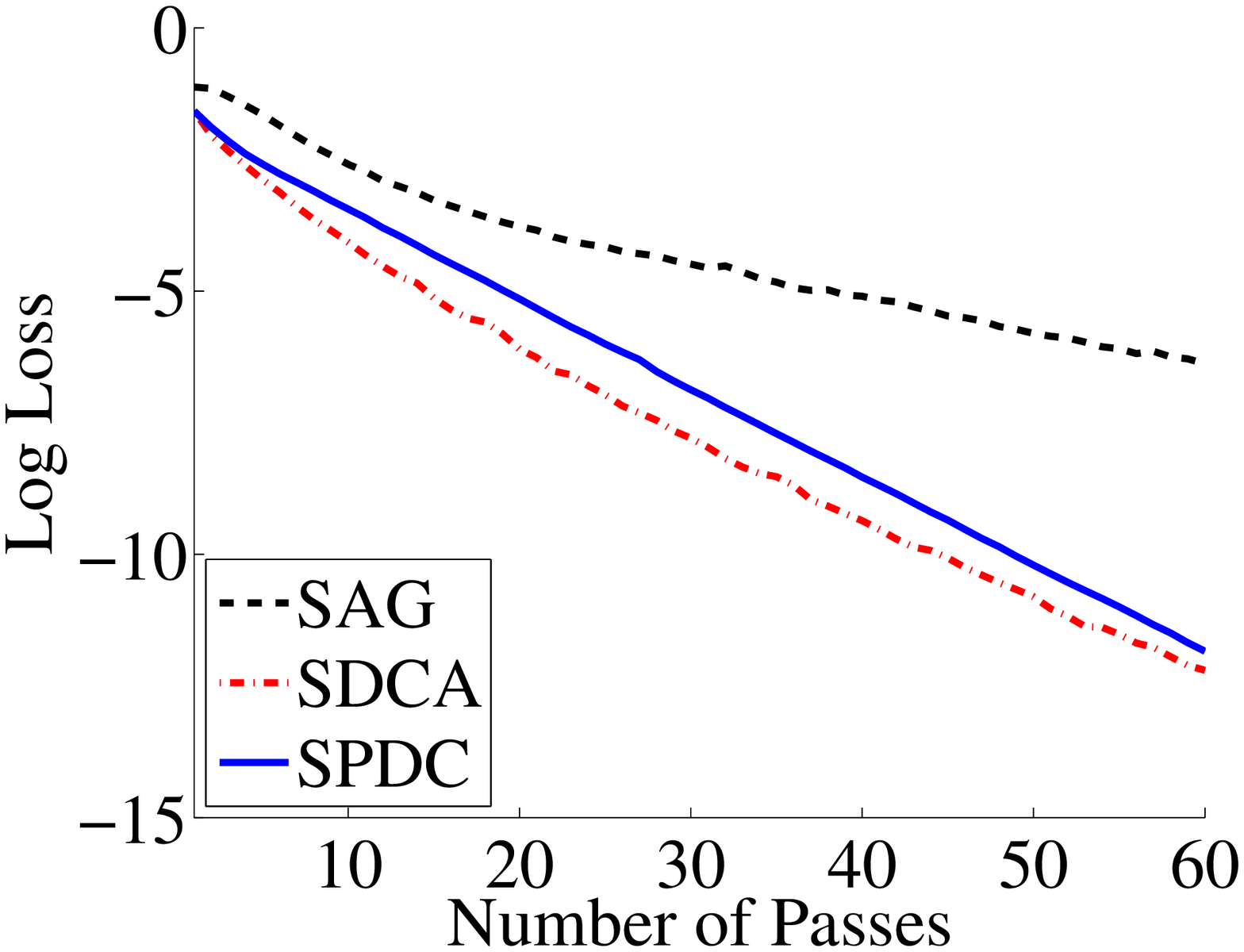}} &
\raisebox{-.5\height}{\includegraphics[width = 0.28\textwidth]{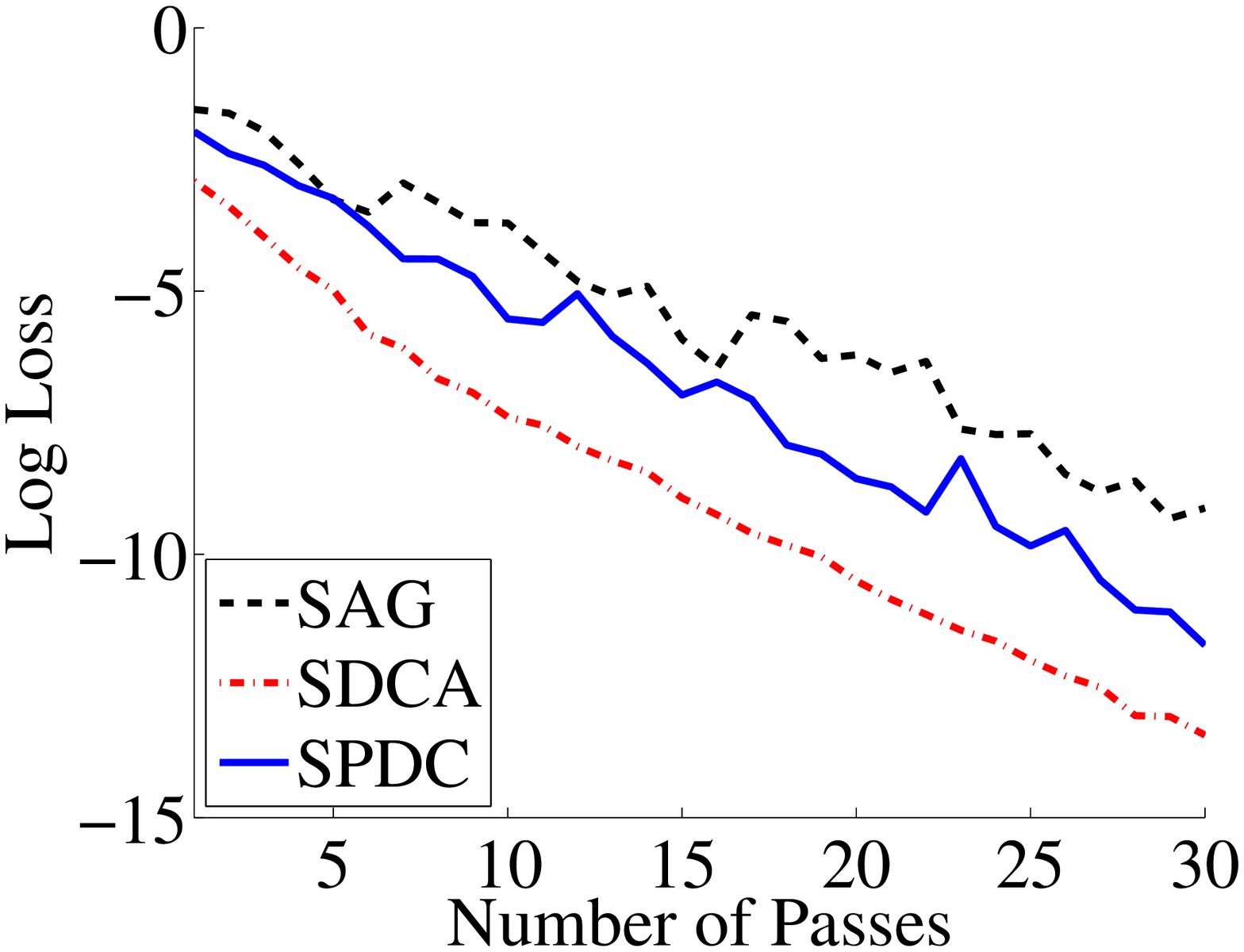}} &
\raisebox{-.5\height}{\includegraphics[width = 0.28\textwidth]{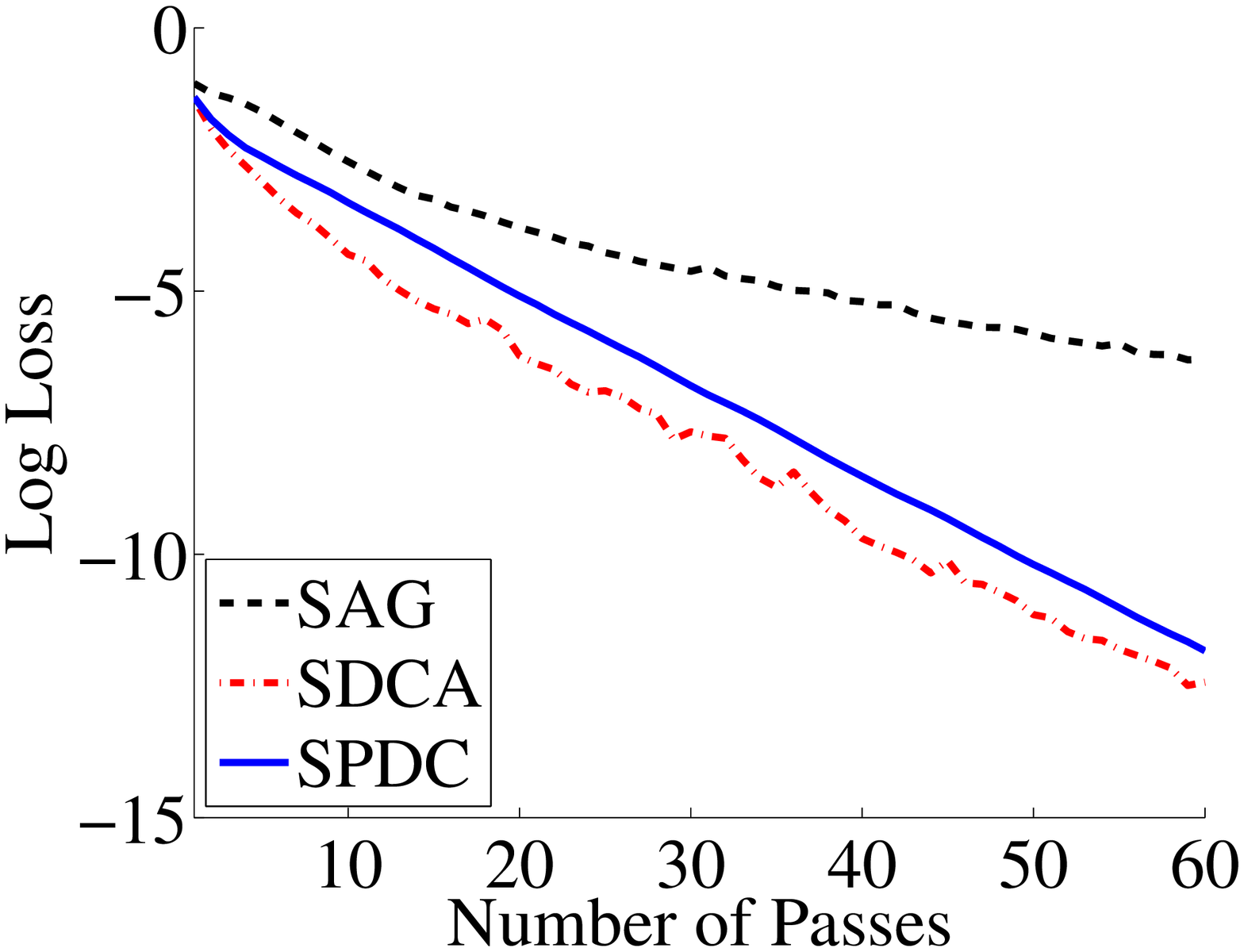}} \\
$10^{-6}$&
\raisebox{-.5\height}{\includegraphics[width = 0.28\textwidth]{rcv1-stochastic-1e-6}} &
\raisebox{-.5\height}{\includegraphics[width = 0.28\textwidth]{covtype-stochastic-1e-6}} &
\raisebox{-.5\height}{\includegraphics[width = 0.28\textwidth]{news20-stochastic-1e-6}} \\
$10^{-7}$&
\raisebox{-.5\height}{\includegraphics[width = 0.28\textwidth]{rcv1-stochastic-1e-7}} &
\raisebox{-.5\height}{\includegraphics[width = 0.28\textwidth]{covtype-stochastic-1e-7}} &
\raisebox{-.5\height}{\includegraphics[width = 0.28\textwidth]{news20-stochastic-1e-7}} \\
$10^{-8}$&
\raisebox{-.5\height}{\includegraphics[width = 0.28\textwidth]{rcv1-stochastic-1e-8}} &
\raisebox{-.5\height}{\includegraphics[width = 0.28\textwidth]{covtype-stochastic-1e-8}} &
\raisebox{-.5\height}{\includegraphics[width = 0.28\textwidth]{news20-stochastic-1e-8}} \\
\end{tabular}
\vspace{2ex}
\caption{Comparing SPDC with SAG, SDCA and ASDCA on three real datasets with smoothed
    hinge loss.
The horizontal axis is the number of passes through the entire dataset, and
the vertical axis is the logarithmic optimality gap $\log(P(x^{(T)}) - P(\xhat))$.
The SPDC algorithm is faster than SAG and SDCA
when~$\lambda$ is small. It is faster than ASDCA on datasets RCV1 and News20.}
\label{fig:real-compare-stochastic}
\end{figure}

The performance of the five algorithms are plotted in Figure~\ref{fig:real-compare-batch}
and Figure~\ref{fig:real-compare-stochastic}. 
In Figure~\ref{fig:real-compare-batch}, we compare SPDC with the two batch 
methods: AFG and L-BFGS. 
The results show that SPDC is substantially faster than AFG and L-BFGS 
for relatively large $\lambda$, illustrating the advantage of stochastic 
methods over batch methods on well-conditioned problems. 
As~$\lambda$ decreases to $10^{-8}$, the batch methods (especially L-BFGS) 
become comparable to SPDC. 

In Figure~\ref{fig:real-compare-stochastic},
we compare SPDC with the three stochastic methods: SAG, SDCA and ASDCA.
Note that the specification of ASDCA~\cite{SSZhang13acclSDCA} requires the regularization coefficient $\lambda$ satisfies $\lambda \leq \frac{R^2}{10n}$ where $R$ is the maximum $\ell_2$-norm of feature vectors. To satisfy this constraint, we run ASDCA with $\lambda\in\{10^{-6},10^{-7},10^{-8}\}$. In Figure~\ref{fig:real-compare-stochastic}, the observations are just the opposite to that of Figure~\ref{fig:real-compare-batch}.
All stochastic algorithms have comparable performances on relatively large $\lambda$, but SPDC and ASDCA becomes substantially faster when~$\lambda$ gets closer to zero. In particular, ASDCA converges faster than SPDC on the Covtype dataset, but SPDC is faster on the remaining two datasets. In addition, due to the outer-inner loop structure of the ASDCA algorithm, its error rate oscillates and might be bad at early iterations. In contrast, the curve of SPDC is almost linear and it is more stable than ASDCA.

Summarizing Figure~\ref{fig:real-compare-batch} and Figure~\ref{fig:real-compare-stochastic},
the performance of SPDC are always comparable or better
than the other methods in comparison.



\appendix

\section{Proof of Theorem~\ref{thm:spdc-convergence}}
\label{sec:thm1-proof}

We focus on characterizing the values of $x$ and $\yvec$ after the 
$t$-th update in Algorithm~\ref{alg:spdc-minibatch}.
For any $i\in\{1,\ldots,n\}$, let $\ytilde_i$ be the value of $\yvec_i^\suptp$
if $i\in K$, i.e., 
\[
    \ytilde_i = \arg\max_{\beta\in \R} \bigg\{  \beta \langle a_i, \xbar^\supt \rangle - \phistar_{i}(\beta) - \frac{ (\beta -\yvec_i^\supt)^2}{2\sigma} \bigg\}.
\]
Since $\phi_i$ is $(1/\gamma)$-smooth by assumption, 
its conjugate $\phistar_i$ is $\gamma$-strongly convex
(e.g., \cite[Theorem~4.2.2]{HUL01book}).
Thus the function being maximized
above is $(1/\sigma+\gamma)$-strongly concave. Therefore,
\begin{align*}
    -\yhat_i\langle a_i, \xbar^\supt\rangle +\phi_i^*(\yhat_i)+\frac{(\yhat_i-\yvec_i^\supt)^2}{2\sigma} 
    \geq &
    -\ytilde_i\langle a_i, \xbar^\supt\rangle +\phi_i^*(\ytilde_i)+\frac{(\ytilde_i-\yvec_i^\supt)^2}{2\sigma} \\
    & +\Big(\frac{1}{\sigma}+\gamma\Big)\frac{(\ytilde_i-\yhat_i)^2}{2} .
\end{align*}
Multiplying both sides of the above inequality by $m/n$ and re-arrange terms, 
we have
\begin{align}
  \frac{m}{2\sigma n}(\yvec_i^\supt-\yhat_i)^2  \,\geq\,
& \Big(\frac{1}{\sigma}+\gamma\Big)\frac{m}{2n}(\ytilde_i-\yhat_i)^2 
  + \frac{m}{2\sigma n}(\ytilde_i-\yvec_i^\supt)^2 \nonumber \\
&  - \frac{m}{n}(\ytilde_i-\yhat_i)\langle a_i, \xbar^\supt \rangle 
  + \frac{m}{n}\bigl(\phi_i^*(\ytilde_i)-\phi_i^*(\yhat_i)\bigr) .
\label{eqn:y-update-sc}
\end{align}
According to Algorithm~\ref{alg:spdc-minibatch}, the set~$K$ of indices 
to be updated are chosen randomly. 
For every specific index $i$, the event $i\in K$ happens with probability $m/n$.
If $i\in K$, then $\yvec_i^\suptp$ is updated to the value $\ytilde_i$, which
satisfies inequality~\eqref{eqn:y-update-sc}. 
Otherwise, $\yvec_i^\suptp$ is assigned by its old value $\yvec_i^\supt$. 
Let $\field_t$ be the sigma field generated by all random variables 
defined before round $t$, and taking expectation conditioned on $\field_t$, 
we have
\begin{align*}
	\E[(\yvec_i^\suptp - \yhat_i)^2 | \field_t ] &= \frac{m (\ytilde_i - \yhat_i)^2}{n} + \frac{(n-m)(\yvec_i^\supt - \yhat_i)^2}{n},\\
	\E[(\yvec_i^\suptp - \yvec_i^\supt)^2 | \field_t ] &= \frac{m (\ytilde_i - \yvec_i^\supt)^2}{n},\\
	\E[\yvec_i^\suptp | \field_t ] &= \frac{m \ytilde_i}{n} + \frac{(n-m)\yvec_i^\supt}{n} \\
  \E[\phi_i^*(\yvec_i^\suptp)|\field_t] &= \frac{m}{n} \phi_i^*(\ytilde_i) + \frac{n-m}{n}\phi_i^*(\yvec_i^\supt) .
\end{align*}
As a result, we can represent 
$(\ytilde_i - \yhat_i)^2$, $(\ytilde_i - \yvec_i^\supt)^2$, $\ytilde_i$
and $\phi_i^*(\ytilde_i)$
in terms of the conditional expectations on $(\yvec_i^\suptp - \yhat_i)^2$, 
$(\yvec_i^\suptp - \yvec_i^\supt)^2$, $\yvec_i^\suptp$ and 
$\phi_i^*(\yvec_i^\suptp)$, respectively.
Plugging these representations into inequality~\eqref{eqn:y-update-sc}
and re-arranging terms, we obtain
\begin{align}
 \left( \frac{1}{2\sigma} + \frac{(n-m)\gamma}{2n} \right)(\yvec_i^\supt - \yhat_i)^2 
\geq & \left( \frac{1}{2\sigma} + \frac{\gamma}{2}\right)
 \E[(\yvec_i^\suptp - \yhat_i)^2 | \field_t] + \frac{1}{2\sigma} \E[(\yvec_i^\suptp - \yvec_i^\supt)^2| \field_t] \nonumber\\
 & - \left( \frac{m}{n}(\yvec_i^\supt-\yhat_i)+\E[\yvec_i^\suptp-\yvec_i^\supt|\field_t] \right) \langle a_i, \xbar^\supt \rangle \nonumber \\
 & + \E[\phi_i^*(\yvec_i^\suptp)|\field_t]-\phi_i^*(\yvec_i^\supt) 
  + \frac{m}{n}\bigl(\phi_i^*(\yvec_i^\supt)-\phi_i^*(\yhat_i)\bigr) .
\end{align}
Then summing over all indices $i=1,2,\dots,n$ and dividing both sides of the resulting inequality by~$m$, we have
\begin{align}
  \left( \frac{1}{2\sigma} + \frac{(n-m )\gamma}{2n} \right) \frac{\ltwos{\yvec^\supt - \yhat}^2 }{m}
& \geq \left( \frac{1}{2\sigma} + \frac{\gamma}{2}\right)
     \frac{\E[\ltwos{\yvec^\suptp - \yhat}^2 | \field_t]}{m} 
     +\frac{1}{2\sigma} \frac{\E[\ltwos{\yvec^\suptp - \yvec^\supt}^2| \field_t]}{m}\nonumber\\
&\quad +\E\Big[\frac{1}{m}\sum_{k\in K}\bigl(\phi_k^*(\yvec_k^\suptp)-\phi_k^*(\yvec_k^\supt) \bigl)\Big| \field_t \Big] 
+\frac{1}{n}\sum_{i=1}^n \bigl(\phi_i^*(\yvec_i^\supt)-\phi_i^*(\yhat_i)\bigr) \nonumber \\
&\quad - \E\Big[ \Big\langle \uvec^\supt - \uhat + \frac{n}{m}(u^\suptp-u^\supt) ,\, \xbar^\supt \Big\rangle \Big| \field_t \Big] ,
	\label{eqn:alpha-expected-inequality-final}
\end{align}
where we used the shorthand notations
(appeared in Algorithm~\ref{alg:spdc-minibatch})
\begin{equation}\label{eqn:u-notation}
  \uvec^\supt = \frac{1}{n}\sum_{i=1}^n \yvec_i^\supt a_i, \qquad
  \uvec^\suptp = \frac{1}{n}\sum_{i=1}^n \yvec_i^\suptp a_i , \qquad
  \textrm{and} \qquad
  \uhat = \frac{1}{n}\sum_{i=1}^n \yhat_i a_i . 
\end{equation}
Since only the dual coordinates with indices in~$K$ are updated, we have
\[
  \frac{n}{m} (u^\suptp-u^\supt) = \frac{1}{m} \sum_{i=1}^n (\yvec_i^\suptp - \yvec_i^\supt)a_i = \frac{1}{m}\sum_{k\in K}(\yvec_k^\suptp - \yvec_k^\supt)a_k .
\]

\bigskip

We also derive an inequality characterizing the relation between $x^\suptp$ and $x^\supt$. 
Since the function being minimized on the right-hand side 
of~\eqref{eqn:spdc-minibatch-minimize} has strong convexity parameter
$1/\tau+\lambda$ and $x^\suptp$ is the minimizer, we have
\begin{align}
& g(\xhat) + \Big\langle \uvec^\supt + \frac{n}{m}(u^\suptp-u^\supt), ~\xhat \Big\rangle +  \frac{\ltwos{x^\supt-\xhat}^2}{2\tau} \label{eqn:thm-1-aux-1}\\
\geq ~&
g(x^\suptp) + \Big\langle \uvec^\supt + \frac{n}{m}(u^\suptp-u^\supt), ~x^\suptp \Big\rangle 
+\left(\frac{1}{2\tau}+\frac{\lambda}{2}\right) \ltwos{x^\suptp-\xhat}^2 \nonumber \nonumber \\
& +  \frac{\ltwos{x^\suptp-x^\supt}^2}{2\tau} . \nonumber
\end{align}
Rearranging terms and taking expectation conditioned on $\field_t$, we have 
\begin{align}
\frac{\ltwos{x^\supt-\xhat}^2}{2\tau} 
& \geq \left(\frac{1}{2\tau}+\frac{\lambda}{2}\right) \E[\ltwos{x^\suptp-\xhat}^2|\field_t] + \frac{\E[\ltwos{x^\suptp-x^\supt}^2|\field_t]}{2\tau} \nonumber\\
&\quad + \E\left[g(x^\suptp) - g(x^\star)|\field_t\right] \nonumber\\
&\quad + \E\Big[\Big\langle \uvec^\supt + \frac{n}{m}(u^\suptp-u^\supt), ~x^\suptp - \xhat \Big\rangle \Big| \field_t \Big] .
\label{eqn:x-side-ineq}
\end{align}

\bigskip

In addition, we consider a particular combination of the saddle-point function
values at different points.
By the definition of $f(x,y)$ in~\eqref{eqn:min-max-saddle} and the notations
in~\eqref{eqn:u-notation}, we have
\begin{align}
&  f(x^\suptp,y^\star) - f(x^\star, y^\star) + \frac{n}{m}\left(f(x^\star, y^\star) - f(x^\star, y^\suptp)\right) - \frac{n-m}{m}\left( f(x^\star, y^\star) - f(x^\star, y^\supt) \right) \nonumber \\
= ~&  f(x^\suptp,y^\star) - f(x^\star, y^\supt) + \frac{n}{m}\left(f(x^\star, y^\supt) - f(x^\star, y^\suptp)\right) \nonumber \\
= ~& \langle u^\star, x^\suptp \rangle - \frac{1}{n}\sum_{i=1}^n \phi_i^*(y^\star_i) + g(x^\suptp)
- \langle u^\supt, x^\star \rangle + \frac{1}{n}\sum_{i=1}^n \phi_i^*(y^\supt_i) - g(x^\star) \nonumber \\
& +\frac{n}{m} \biggl( \langle u^\supt, x^\star \rangle - \frac{1}{n}\sum_{i=1}^n \phi_i^*(y^\supt_i) + g(x^\star) - \langle u^\suptp, x^\star \rangle + \frac{1}{n}\sum_{i=1}^n \phi_i^*(y^\suptp_i) - g(x^\star) \biggr) \nonumber \\
= ~& \frac{1}{n}\sum_{i=1}^n \left(\phi_i^*(y^\supt_i)-\phi_i^*(y^\star_i)\right) + \frac{1}{m}\sum_{k\in K}\left(\phi_i^*(y^\suptp_k)-\phi_i^*(y^\supt_k)\right) + g(x^\suptp) - g(x^\star) \nonumber \\
& + \langle u^\star, x^\suptp \rangle - \langle u^\supt, x^\star \rangle
+ \frac{n}{m} \langle u^\supt-u^\suptp, x^\star \rangle .
\label{eqn:weighted-saddle-func}
\end{align}

Next we add both sides of the
inequalities~\eqref{eqn:alpha-expected-inequality-final}
and~\eqref{eqn:x-side-ineq} together, and then subtract
equality~\eqref{eqn:weighted-saddle-func} after taking expectation
with respect to~$\field_t$.
This leads to the following inequality:
\begin{align}
& \frac{\ltwos{x^\supt-\xhat}^2}{2\tau} 
+\left( \frac{1}{2\sigma} + \frac{(n-m )\gamma}{2n} \right) \frac{\ltwos{\yvec^\supt - \yhat}^2 }{m}
+ \frac{n-m}{m}\left( f(x^\star, y^\star) - f(x^\star, y^\supt) \right) \nonumber\\
\geq ~&  \left(\frac{1}{2\tau}+\frac{\lambda}{2}\right) \E[\ltwos{x^\suptp-\xhat}^2|\field_t] 
+ \left( \frac{1}{2\sigma} + \frac{\gamma}{2}\right)
     \frac{\E[\ltwos{\yvec^\suptp - \yhat}^2 | \field_t]}{m} 
+ \frac{\E[\ltwos{x^\suptp-x^\supt}^2|\field_t]}{2\tau} \nonumber\\
& + \frac{\E[\ltwos{\yvec^\suptp - \yvec^\supt}^2| \field_t]}{2\sigma m}
+ \E \left[ f(x^\suptp,y^\star) - f(x^\star, y^\star) + \frac{n}{m}\left(f(x^\star, y^\star) - f(x^\star, y^\suptp)\right) \bigg| \field_t \right] \nonumber \\
& + \E\left[ \left\langle u^\supt-u^\star+\frac{n}{m} (u^\suptp-u^\supt), 
~ x^\suptp-\xbar^\supt \right\rangle \Big| \field_t \right] .
\label{eqn:pd-inequality-basic}
\end{align}
We need to lower bound the last term on the right-hand-side of the above inequality.
To this end, we have
\begin{align}
& \left\langle u^\supt-u^\star+\frac{n}{m} (u^\suptp-u^\supt), ~ x^\suptp-\xbar^\supt \right\rangle \nonumber \\ 
= ~& \biggl(\frac{\yvec^\supt - \yhat}{n} + \frac{\yvec^\suptp - \yvec^\supt}{m}\biggr)^{T} A (x^\suptp - x^\supt - \theta(x^\supt - x^\suptm)) \nonumber\\
= ~& \frac{(\yvec^\suptp - \yhat)^T A (x^\suptp - x^\supt)}{n} 
 - \frac{\theta(\yvec^\supt - \yhat)^T A (x^\supt - x^\suptm)}{n} \nonumber\\
& + \frac{n-m}{m n}(\yvec^\suptp - \yvec^\supt)^T A (x^\suptp - x^\supt) 
 - \frac{\theta}{m}(\yvec^\suptp - \yvec^\supt)^T A (x^\supt - x^\suptm).
\label{eqn:last-term-expansion}
\end{align}
Recall that $\ltwos{a_k}\leq R$ and,
according to~\eqref{eqn:tau-sigma-theta},
$1 / \tau  = 4 \sigma R^2$. 
Therefore,
\begin{align*}
	|(\yvec^\suptp - \yvec^\supt)^T A (x^\suptp - x^\supt)| &\leq \frac{\ltwos{x^\suptp - x^\supt}^2}{4\tau / m} 
	+ \frac{\ltwos{(\yvec^\suptp - \yvec^\supt)^T A}^2}{m/\tau} \\
	&= \frac{\ltwos{x^\suptp - x^\supt}^2}{4\tau / m} 
	+ \frac{(\sum_{k\in K} |\yvec_k^\suptp - \yvec_k^\supt| \cdot \ltwos{a_k})^2}{4 m \sigma R^2} \\
	&\leq \frac{m \ltwos{x^\suptp - x^\supt}^2}{4\tau} + \frac{\ltwos{\yvec^\suptp - \yvec^\supt}^2}{4\sigma},
\end{align*}
Similarly, we have
\begin{align*}
	|(\yvec^\suptp - \yvec^\supt)^T A (x^\supt - x^\suptm)| 
    &\leq \frac{m \ltwos{x^\supt - x^\suptm}^2}{4\tau} 
    + \frac{\ltwos{\yvec^\suptp - \yvec^\supt}^2}{4\sigma}.
\end{align*}
The above upper bounds on the absolute values imply
\begin{align*}
	(\yvec^\suptp - \yvec^\supt)^T A (x^\suptp - x^\supt)
	&\geq -\frac{m \ltwos{x^\suptp - x^\supt}^2}{4\tau} 
    - \frac{\ltwos{\yvec^\suptp - \yvec^\supt}^2}{4\sigma}, \\
	(\yvec^\suptp - \yvec^\supt)^T A (x^\supt - x^\suptm)
    &\geq - \frac{m \ltwos{x^\supt - x^\suptm}^2}{4\tau} 
    - \frac{\ltwos{\yvec^\suptp - \yvec^\supt}^2}{4\sigma}.
\end{align*}
Combining the above two inequalities with~\eqref{eqn:pd-inequality-basic} 
and~\eqref{eqn:last-term-expansion}, we obtain
\begin{align}
& \frac{\ltwos{x^\supt - \xhat}^2}{2\tau} + \left( \frac{1}{2\sigma} + \frac{(n-m)\gamma}{2n} \right)\frac{\ltwos{\yvec^\supt - \yhat}^2}{m} \nonumber\\
& +\theta \bigl(f(x^\supt,y^\star) - f(x^\star, y^\star)\bigr)
 + \frac{n-m}{m}\left( f(x^\star, y^\star) - f(x^\star, y^\supt) \right) \nonumber\\
& + \theta \frac{\ltwos{x^\supt - x^\suptm}^2}{4\tau}
+ \theta \frac{(\yvec^\supt - \yhat)^T A (x^\supt - x^\suptm)}{n} \nonumber\\
\geq ~& \left( \frac{1}{2\tau} + \frac{\lambda}{2}\right)\E[\ltwos{x^\suptp - \xhat}^2 | \field_t] 
    + \left( \frac{1}{2\sigma} + \frac{\gamma}{2}\right) \frac{\E[\ltwos{\yvec^\suptp - \yhat}^2 | \field_t]}{m}   \nonumber \\
& + \E \left[ f(x^\suptp,y^\star) - f(x^\star, y^\star) + \frac{n}{m}\left(f(x^\star, y^\star) - f(x^\star, y^\suptp)\right) \bigg| \field_t \right] \nonumber \\
& + \frac{\E[\ltwos{x^\suptp-x^\supt}^2|\field_t]}{4\tau} 
 + \frac{\E[ (\yvec^\suptp - \yhat)^T A (x^\suptp - x^\supt) | \field_t]}{n}.
\label{eqn:pd-inequality-final}
\end{align}
Note that we have added the nonnegative term 
$\theta \bigl( f(x^\supt,y^\star) - f(x^\star, y^\star)\bigr)$ 
to the left-hand side in~\eqref{eqn:pd-inequality-final} to ensure that each
term on one side of the inequality has a corresponding term on the other side.

\bigskip

If the parameters $\tau$, $\sigma$, and $\theta$ are chosen as 
in~\eqref{eqn:tau-sigma-theta}, that is, 
\begin{align*}
	\tau = \frac{1}{R} \sqrt{\frac{m \gamma}{n\lambda}},\quad \sigma = \frac{1}{R} \sqrt{\frac{n\lambda}{m \gamma}}, \quad \mbox{and} \quad
	\theta = 1 - \frac{1}{(n/m) + R\sqrt{(n/m)/(\lambda\gamma)}},
\end{align*}
Then the ratios between the coefficients of the corresponding terms on both 
sides of the inequality~\eqref{eqn:pd-inequality-final} 
are either equal to~$\theta$ or bounded by~$\theta$.
More specifically,
\begin{align*}
  \frac{n-m}{m} \bigg/ \frac{n}{m} = 1 - \frac{m}{n} \leq \theta , \\
  \frac{1}{2\tau} \bigg/ \left(\frac{1}{2\tau} + \frac{\lambda}{2} \right) = 1 - \frac{1}{1+R\sqrt{(n/m)/(\lambda\gamma)}} \leq \theta, \\
  \left( \frac{1}{2\sigma} + \frac{(n-m)\gamma}{2n} \right) \bigg/ \left( \frac{1}{2\sigma} + \frac{\gamma}{2}\right) = 1 - \frac{1}{n/m+R\sqrt{(n/m)/(\lambda\gamma)}} = \theta .
\end{align*}
Therefore, if we define the following sequence,
\begin{align*}
\widetilde{\Delta}^\supt 
= ~& \left( \frac{1}{2\tau} + \frac{\lambda}{2}\right)\ltwos{x^\supt - \xhat}^2
+ \left( \frac{1}{2\sigma} + \frac{\gamma}{2}\right) \frac{\ltwos{\yvec^\supt - \yhat}^2}{m} \nonumber \\
& + f(x^\supt,y^\star) - f(x^\star, y^\star) + \frac{n}{m}\left(f(x^\star, y^\star) - f(x^\star, y^\supt)\right) \nonumber \\
& + \frac{\ltwos{x^\supt - x^\suptm}^2}{4\tau} + \frac{ (\yvec^\supt - \yhat)^T A (x^\supt - x^\suptm)}{n} ,
\end{align*}
then inequality~\eqref{eqn:pd-inequality-final} implies 
$\E\bigl[\widetilde{\Delta}^\suptp |\field_t\bigr]\leq\theta\, \widetilde{\Delta}^\supt$. 
Apply this relation recursively and taking expectation with respect to
all random variables up to time~$t$, we have 
\begin{equation} \label{eqn:Delta-all-converge}
\E\bigl[\widetilde{\Delta}^\supt\bigr]~\leq~\theta^t\,\widetilde{\Delta}^{(0)}.
\end{equation}
Comparing the definition of $\Delta^\supt$ in~\eqref{eqn:Delta-def}, we have
\begin{align}
\widetilde{\Delta}^\supt 
= ~& \Delta^\supt + \frac{\ltwos{\yvec^\supt - \yhat}^2}{4\sigma m} + \frac{\ltwos{x^\supt - x^\suptm}^2}{4\tau} + \frac{ (\yvec^\supt - \yhat)^T A (x^\supt - x^\suptm)}{n} .
\label{eqn:Delta-all-def}
\end{align}
For $t=0$, by letting $x^{(-1)}=x^{(0)}$, 
the last two terms in~\eqref{eqn:Delta-all-def} 
for $\widetilde{\Delta}^{(0)}$ disappears. 
Moreover, we can show that the sum of the last three terms 
in~\eqref{eqn:Delta-all-def} are nonnegative, 
and therefore we can replace $\widetilde{\Delta}^\supt$ 
with $\Delta^\supt$ on the left-hand side of~\eqref{eqn:Delta-all-converge}.
To see this, we bound the absolute value of the last term:
\begin{align*}
	\frac{\bigl|(\yvec^\supt - \yhat)^T A (x^\supt - x^\suptm)\bigr|}{n} 
	&\leq \frac{\ltwos{x^\supt - x^\suptm}^2}{4\tau} +  \frac{\|A\|_2^2\;\ltwos{\yvec^\supt - \yhat}^2}{n^2/\tau} \\
	&\leq \frac{\ltwos{x^\supt - x^\suptm}^2}{4\tau} +  \frac{n R^2 \ltwos{\yvec^\supt - \yhat}^2}{n^2/\tau} \\
	&= \frac{\ltwos{x^\supt - x^\suptm}^2}{4\tau} +  \frac{\ltwos{\yvec^\supt - \yhat}^2}{4 n \sigma} \\
	&\leq \frac{\ltwos{x^\supt - x^\suptm}^2}{4\tau} +  \frac{\ltwos{\yvec^\supt - \yhat}^2}{4 m \sigma} ,
\end{align*}
where in the second inequality we used $\|A\|_2^2\leq\|A\|_F^2\leq n R^2$, 
in the equality we used $\tau\sigma=1/(4R^2)$,
and in the last inequality we used $m\leq n$.
The above upper bound on absolute value implies
\[
	\frac{(\yvec^\supt - \yhat)^T A (x^\supt - x^\suptm)}{n} 
    ~\geq~
	-\frac{\ltwos{x^\supt - x^\suptm}^2}{4\tau} - \frac{\ltwos{\yvec^\supt - \yhat}^2}{4 m \sigma} .
\]
To summarize, we have proved
\[
  \E \left[ \Delta^\supt \right] \leq \theta^t\,
  \left(\Delta^{(0)} + \frac{\ltwos{y^{(0)}-y^\star}^2}{4m\sigma} \right),
\]
which is the desired result.

\section{Proof of Lemma~\ref{lem:gap-by-saddle}}
\label{sec:proof-gap-by-saddle}

  We can write $P(x)= F(x) + g(x)$ where
  \begin{align*}
    F(x)&= \frac{1}{n}\sum_{i=1}^n \phi_i(a_i^T x) 
    = \max_{y\in\R^n} \biggl\{ \frac{1}{n}y^T A x - \frac{1}{n}\sum_{i=1}^n \phi_i^*(y_i)\biggr\}. 
  \end{align*}
Assumption~\ref{asmp:smooth-convex} implies that $F(x)$ is smooth and 
$\nabla F(x)$ is Lipschitz continuous with constant 
$\|A\|_2^2/(n\gamma)$.
We can bound the spectral norm with the Frobenius norm, i.e.,
$\|A\|_2^2 \leq \|A\|_F^2 \leq n R^2$, which results in
$\|A\|_2^2/(n\gamma)\leq n R^2 /(n\gamma)=R^2/\gamma$.
By definition of the saddle point, 
the gradient of~$F$ at $x^\star$ is $\nabla F(x^\star)=(1/n)A^T y^\star$.
Therefore, we have
\begin{align*}
 F(x) &\leq F(x^\star) + \langle \nabla F(x^\star), x-x^\star\rangle
 + \frac{R^2}{2\gamma}\ltwos{x-x^\star}^2 \\
 &= \max_{y\in\R^n} \biggl\{ \frac{1}{n}y^T A x^\star - \frac{1}{n}\sum_{i=1}^n \phi_i^*(y_i)\biggr\} + \frac{1}{n}(y^\star)^T A (x-x^\star) + \frac{R^2}{2\gamma}\ltwos{x-x^\star}^2 \\
 &= \biggl\{\frac{1}{n}(y^\star)^T A x^\star - \frac{1}{n}\sum_{i=1}^n \phi_i^*(y^\star_i)\biggr\} + \frac{1}{n}(y^\star)^T A (x-x^\star) + \frac{R^2}{2\gamma}\ltwos{x-x^\star}^2 \\
 &= \frac{1}{n}(y^\star)^T A x - \frac{1}{n}\sum_{i=1}^n \phi_i^*(y^\star_i) + \frac{R^2}{2\gamma}\ltwos{x-x^\star}^2 .
\end{align*}
Combining the above inequality with $P(x)=F(x)+g(x)$, we have
\begin{align*}
 P(x) &\leq \frac{1}{n}(y^\star)^T A x - \frac{1}{n}\sum_{i=1}^n \phi_i^*(y^\star_i) + \frac{R^2}{2\gamma}\ltwos{x-x^\star}^2 + g(x)
 = \spf(x, y^\star) + \frac{R^2}{2\gamma}\ltwos{x-x^\star}^2 ,
\end{align*}
which is the first desired inequality.

Similarly, the second inequality can be shown by first writing
$D(y)=-\frac{1}{n}\sum_{i=1}^n \phi_i^*(y_i) - G^*(y)$, where
\begin{align*}
  G^*(y) = g^*\biggl(-\frac{1}{n} A^T y\biggr)
  = \max_{x\in\R^d} \biggl\{ -\frac{1}{n} x^T A^T y - g(x) \biggr\}.
\end{align*}
In this case, $\nabla G^*(y)$ is Lipschitz continuous with
constant $\|A\|_2^2/(n^2\lambda)\leq n R^2/(n^2\lambda)=R^2/(n\lambda)$.
Again by definition of the saddle-point, we have 
$\nabla G^*(y^\star) = - (1/n)A x^\star$. Therefore,
\begin{align*}
 G^*(y) & \leq G^*(y^\star) + \langle \nabla G^*(y^\star), y-y^\star\rangle
 + \frac{R^2}{2n\lambda}\ltwos{y-y^\star}^2  \\
 &= \max_{x\in\R^d} \biggl\{ -\frac{1}{n} x^T A^T y^\star - g(x) \biggr\}
 - \frac{1}{n} (y-y^\star)^T A x^\star + \frac{R^2}{2n\lambda}\ltwos{y-y^\star}^2  \\
 &= \biggl\{ -\frac{1}{n} (x^\star)^T A^T y^\star - g(x^\star) \biggr\}
 - \frac{1}{n} (y-y^\star)^T A x^\star + \frac{R^2}{2n\lambda}\ltwos{y-y^\star}^2  \\
 &= - \frac{1}{n} y^T A x^\star - g(x^\star) + \frac{R^2}{2n\lambda}\ltwos{y-y^\star}^2  .
\end{align*}
Recalling that $D(y)=-\frac{1}{n}\sum_{i=1}^n\phi_i^*(y_i) - G^*(y)$, we conclude with
\begin{align*}
 D(y) \geq  -\frac{1}{n}\phi_i^*(y_i) + \frac{1}{n} y^T A x^\star + g(x^\star) - \frac{R^2}{2n\lambda}\ltwos{y-y^\star}^2  
  = \spf(x^\star, y) - \frac{R^2}{2n\lambda}\ltwos{y-y^\star}^2 .
\end{align*}
This finishes the proof.

\section{Proof of Theorem~\ref{thm:nonuniform-spdc-convergence}}

The proof of Theorem~\ref{thm:nonuniform-spdc-convergence} follows similar 
steps for proving Theorem~\ref{thm:spdc-convergence}. 
We start by establishing relation between $(\yvec^\supt,\yvec^\suptp)$ 
and between $(x^\supt,x^\suptp)$. 
Suppose that the quantity $\ytilde_i$ minimizes the function
$\phistar_{i}(\beta) - \beta \langle a_i, \xbar^\supt \rangle + \frac{p_i n}{2\sigma} (\beta-\yvec_i^\supt)^2$. Also notice that
$\phi_i^*(\beta) - \beta\langle a_i, x^* \rangle $ is a $\gamma$-strongly convex function minimized by $y_i^*$, which implies
\begin{align}
	\phi^*_i(\ytilde_i) - \ytilde_i \langle a_i, x^* \rangle
	\geq \phi^*_i(y_i^*) - y_i^* \langle a_i, x^* \rangle
	+ \frac{\gamma}{2} (\ytilde_i - y_i^*)^2.\label{eqn:prf-2-aux-1}
\end{align}
Then, following the same argument for establishing inequality~\eqref{eqn:y-update-sc} and plugging in inequality~\eqref{eqn:prf-2-aux-1},
we obtain
\begin{align}\label{eqn:nonuniform-alpha-update-inequality-basic}
 \frac{p_i n}{2\sigma}(\yvec_i^\supt - \yhat_i)^2 \geq \left( \frac{p_i n}{2\sigma} + \gamma\right)(\ytilde_i - \yhat_i)^2 + \frac{p_i n(\ytilde_i - \yvec_i^\supt)^2}{2\sigma} + \langle a_i, \xhat - \xbar^\supt \rangle (\ytilde_i - \yhat_i ).
\end{align}
Note that $i=k$ with probability $p_i$. Therefore, we have
\begin{align*}
	(\ytilde_i - \yhat_i)^2 &= \frac{1}{p_i} \E[(\yvec_i^\suptp - \yhat_i)^2 | \field_t] - \frac{1-p_i}{p_i} (\yvec_i^\supt - \yhat_i)^2,\\
	(\ytilde_i  - \yvec_i^\supt)^2 &= \frac{1}{p_i} \E[(\yvec_i^\suptp - \yvec_i^\supt)^2 | \field_t],\\
	\ytilde_i &= \frac{1}{p_i} \E[\yvec_i^\suptp | \field_t] - \frac{1-p_i}{p_i} \yvec_i^\supt,
\end{align*}
where $\field_t$ represents the sigma field generated by all random variables defined before iteration $t$.
Substituting the above equations into inequality~\eqref{eqn:nonuniform-alpha-update-inequality-basic}, and averaging over
$i=1,2,\dots,n$, we have
\begin{align}
	 \sum_{i=1}^n \left( \frac{1}{2\sigma} + \frac{(1-p_i)\gamma}{p_i n} \right)(\yvec_i^\supt - \yhat_i)^2 & \geq \sum_{i=1}^n
	\left( \frac{1}{2\sigma} + \frac{\gamma}{p_i n}\right)
	\E[(\yvec_i^\suptp - \yhat_i)^2 | \field_t] + \frac{\E[(\yvec_k^\suptp - \yvec_k^\supt)^2| \field_t]}{2\sigma}\nonumber\\
    &\quad + \E\Bigl[ \Bigl\langle (\uvec^\supt  - \uhat) + \frac{1}{p_k}(u^\suptp-u^\supt), ~ \xhat - \xbar^\supt \Bigr\rangle \Big| \field_t \Bigr],\label{eqn:nonuniform-alpha-expected-inequality-final}
\end{align}
where $\uhat =\frac{1}{n}\sum_{i=1}^n \yhat_i a_i$ and $\uvec^\supt = \frac{1}{n}\sum_{i=1}^n \yvec_i^\supt a_i$
have the same definition as in the proof of Theorem~\ref{thm:spdc-convergence}. 

For the relation between $x^\supt$ and $x^\suptp$, 
we first notice that
$\langle u^*, x\rangle + g(x)$ is a $\lambda$-strongly convex function minimized by $x^*$, which implies
\begin{align}\label{eqn:prf-2-aux-2}
	\langle u^*, x^\suptp\rangle + g(x^\suptp) \geq
	 \langle u^*, x^*\rangle + g(x^*) _+ \frac{\lambda}{2}(x^\suptp - x^*)^2 .
\end{align}
Following the same argument for establishing inequality~\eqref{eqn:thm-1-aux-1} and plugging in inequality~\eqref{eqn:prf-2-aux-2}, we obtain
\begin{align}\label{eqn:nonuniform-x-update-inequality-basic}
 \frac{\ltwos{x^\supt - \xhat}^2}{2\tau} &\geq \left( \frac{1}{2\tau} + \lambda\right)\ltwos{x^\suptp - \xhat}^2 + \frac{\ltwos{x^\suptp - x^\supt}^2}{2\tau}\nonumber\\
&\qquad + \Big\langle (\uvec^\supt - \uhat )+ \frac{1}{p_k}(u^\suptp-u^\supt),~ x^\suptp - \xhat \Big\rangle.
\end{align}
Taking expectation over both sides of inequality~\eqref{eqn:nonuniform-x-update-inequality-basic} and adding it to inequality~\eqref{eqn:nonuniform-alpha-expected-inequality-final}
yields
\begin{align}
	&\frac{\ltwos{x^\supt - \xhat}^2}{2\tau} + \sum_{i=1}^n \left( \frac{1}{2\sigma} + \frac{(1-p_i)\gamma}{p_i n} \right)(\yvec_i^\supt - \yhat_i)^2 \geq \left( \frac{1}{2\tau} + \lambda\right)\E[ \ltwos{x^\suptp - \xhat}^2 | \field_t] \nonumber\\
	&\qquad +  \sum_{i=1}^n \left( \frac{1}{2\sigma} + \frac{\gamma}{p_i n}\right)
	\E[(\yvec_i^\suptp - \yhat_i)^2 | \field_t] + \frac{\ltwos{x^\suptp - x^\supt}^2}{2\tau} + \frac{\E[(\yvec_k^\suptp - \yvec_k^\supt)^2| \field_t]}{2\sigma} \nonumber\\
	&\qquad  + \E\Big[ \underbrace{\Big( \frac{(\yvec^\supt  - \yhat)^{T}A}{n} + \frac{(\yvec_k^\suptp - \yvec_k^\supt)a_k^T}{p_k n}\Big) ( (x^\suptp - x^\supt)
	- \theta (x^\supt - x^\suptm))}_{v}  \Big| \field_t\Big] ,
    \label{eqn:nonuniform-pd-inequality-basic}
\end{align}
where the matrix $A$ is a $n$-by-$d$ matrix, whose $i$-th row is equal to the vector $a_i^{T}$.

\bigskip

Next, we lower bound the last term on the right-hand side of inequality~\eqref{eqn:nonuniform-pd-inequality-basic}.
Indeed, it can be expanded as
\begin{align}
 v &= \frac{(\yvec^\suptp - \yhat)^T A (x^\suptp - x^\supt)}{n} \nonumber - \frac{\theta(\yvec^\supt - \yhat)^T A (x^\supt - x^\suptm)}{n} \\
 & \qquad + \frac{1 - p_k}{p_k n}(\yvec_k^\suptp - \yvec_k^\supt)a_k^{T} (x^\suptp - x^\supt)
	- \frac{\theta}{p_k n}(\yvec_k^\suptp - \yvec_k^\supt)a_k^{T} (x^\supt - x^\suptm)\label{eqn:nonuniform-last-term-expansion}.
\end{align}
Note that the probability $p_k$ given in~\eqref{eqn:nonuniform-prob} satisfies 
\[
    p_k \geq \alpha\, \frac{\ltwos{a_k}}{ \sum_{i=1}^n \ltwos{a_i}} 
    = \alpha\, \frac{\ltwos{a_k}}{n \barR}, \qquad k=1, \dots, n.
\]
Since the parameters $\tau$ and $\sigma$ satisfies 
$\sigma\tau \barR^2=\alpha^2/4$, 
we have $p_k^2 n^2 / \tau \geq 4\sigma \|a_k\|_2^2$ and consequently
\begin{align*}
\frac{|(\yvec_k^\suptp - \yvec_k^\supt)a_k^{T} (x^\suptp - x^\supt)|}{p_k n} &\leq \frac{\ltwos{x^\suptp - x^\supt}^2}{4\tau} + \frac{\ltwos{(\yvec_k^\suptp - \yvec_k^\supt)a_k}^2}{p_k^2 n^2 / \tau} \\
&\leq \frac{\ltwos{x^\suptp - x^\supt}^2}{4\tau} + \frac{(\yvec_k^\suptp - \yvec_k^\supt)^2}{4\sigma}.
\end{align*}
Similarly, we have
\begin{align*}
	\frac{|(\yvec_k^\suptp - \yvec_k^\supt)a_k^{T} (x^\supt - x^\suptm)|}{p_k n} \leq \frac{\ltwos{x^\supt - x^\suptm}^2}{4\tau} + \frac{(\yvec_k^\suptp - \yvec_k^\supt)^2}{4\sigma}.
\end{align*}
Combining the above two inequalities with lower bounds~\eqref{eqn:nonuniform-pd-inequality-basic} and~\eqref{eqn:nonuniform-last-term-expansion}, we obtain
\begin{align}
   & \frac{\ltwos{x^\supt - \xhat}^2}{2\tau} + \sum_{i=1}^n \left( \frac{1}{2\sigma} + \frac{(1-p_i)\gamma}{p_i n} \right)(\yvec_i^\supt - \yhat_i)^2 \geq
	\left( \frac{1}{2\tau} + \lambda\right)\E[\ltwos{x^\suptp - \xhat}^2 | \field_t] \nonumber\\
	& \qquad + \sum_{i=1}^n \left( \frac{1}{2\sigma} + \frac{\gamma}{p_i n}\right)
	\E[(\yvec_i^\suptp - \yhat_i)^2 | \field_t]  + \frac{\E[\ltwos{x^\suptp - x^\supt}^2 | \field_t] - \theta \ltwos{x^\supt - x^\suptm}^2}{4\tau}\nonumber\\
	& \qquad + \frac{\E[ (\yvec^\suptp - \yhat)^T A (x^\suptp - x^\supt) | \field_t] - \theta (\yvec^\supt - \yhat)A (x^\supt - x^\suptm)}{n}.
	\label{eqn:nonuniform-pd-inequality-final}
\end{align}
Recall that the parameters $\tau$, $\sigma$, and $\theta$ are chosen to be
\begin{align*}
	\tau = \frac{\alpha}{2\barR} \sqrt{\frac{\gamma}{n\lambda}},\quad 
    \sigma =\frac{\alpha}{2\barR}\sqrt{\frac{n\lambda}{\gamma}},\quad\mbox{and}\quad
    \theta = 1 - \left(\frac{n}{1-\alpha} + \frac{\barR}{\alpha}\sqrt{\frac{n}{\lambda\gamma}}\right)^{-1}.
\end{align*}
Plugging in these assignments and using the fact that $p_i \geq \frac{1-\alpha}{n}$, we find that
\begin{align*}
	&\frac{1/(2\tau)}{1/(2\tau) + \lambda} = 1 - \Big( 1 + \frac{1}{2\tau\lambda}\Big)^{-1} = 
	1 - \Big( 1 + \frac{\barR}{\alpha}\sqrt{\frac{n}{\lambda\gamma}}\Big)^{-1}
	\leq \theta \quad \mbox{and}\\
	& \frac{1/(2\sigma) + (1-p_i)\gamma/(p_i n)}{1/(2\sigma) + \gamma/ (p_i n)} =
	1 - \Big( \frac{1}{p_i} + \frac{n}{2\sigma} \Big)^{-1}
	\\
	&\qquad\qquad\qquad\leq 1 - \Big( \frac{n}{1-\alpha} + \frac{n}{2\sigma\gamma} \Big)^{-1} = \theta  \quad \mbox{for $i=1,2,\dots,n$} .
\end{align*}
Therefore, if we define a sequence $\Delta^\supt$ such that
\begin{align*}
	\Delta^\supt &= \left( \frac{1}{2\tau} + \lambda\right)\E[\ltwos{x^\supt - \xhat}^2] + \sum_{i=1}^n\left( \frac{1}{2\sigma} + \frac{\gamma}{p_i n} \right)
	\E[(\yvec_i^\supt - \yhat_i)^2]\\
	&\qquad + \frac{\E[\ltwos{x^\supt - x^\suptm}^2]}{4\tau} + \frac{\E[ (\yvec^\supt - \yhat)^T A (x^\supt - x^\suptm)]}{n},
\end{align*}
then inequality~\eqref{eqn:nonuniform-pd-inequality-final} implies the recursive relation $\Delta^\suptp \leq \theta \cdot \Delta^\supt$,
which implies
\begin{align}
&\left(\frac{1}{2\tau}+\lambda\right) \E[\ltwos{x^\supt - \xhat}^2] 
+ \left(\frac{1}{2\sigma} + \frac{\gamma}{n}\right)
	\E[\ltwos{\yvec^\supt - \yhat}^2] \nonumber\\
&\quad + \frac{\E[\ltwos{x^\supt - x^\suptm}^2]}{4\tau} 
  + \frac{\E[(\yvec^\supt - \yhat)^T A (x^\supt - x^\suptm)]}{n} 
~\leq~ \theta^{t} \Delta^{(0)} ,
    \label{eqn:nonuniform-almost-there}
\end{align}
where
\begin{align*}
    \Delta^{(0)} &= \left( \frac{1}{2\tau} + \lambda\right)\ltwos{x^{(0)}-\xhat}^2 + \sum_{i=1}^n\left( \frac{1}{2\sigma} + \frac{\gamma}{p_i n} \right)
    (\yvec_i^{(0)} - \yhat_i)^2 \\
    &\leq  \left( \frac{1}{2\tau} + \lambda\right)\ltwos{x^{(0)}-\xhat}^2 + \left( \frac{1}{2\sigma} + \frac{\gamma}{1-\alpha} \right)\ltwos{y^{(0)}-\yhat}^2.
\end{align*}
To eliminate the last two terms on the left-hand side of inequality~\eqref{eqn:nonuniform-almost-there}, we notice that
\begin{align*}
	\frac{|(\yvec^\supt - \yhat)^T A (x^\supt - x^\suptm)|}{n} &
	\leq \frac{\ltwos{x^\supt - x^\suptm}^2}{4\tau}
	+ \frac{\ltwos{\yvec^\supt - \yhat}^2\,\ltwos{A}^2}{n^2/\tau}\\
	&\leq  \frac{\ltwos{x^\supt - x^\suptm}^2}{4\tau}
	+ \frac{\ltwos{\yvec^\supt - \yhat}^2 \,\| A\|_F^2}{n^2/\tau}\\
	&=  \frac{\ltwos{x^\supt - x^\suptm}^2}{4\tau}
	+ \frac{\ltwos{\yvec^\supt - \yhat}^2 \, \sum_{i=1}^n \ltwos{a_i}^2}{(4/\alpha^2) \sigma (\sum_{i=1}^n \ltwos{a_i})^2}
	\\
	&\leq \frac{\ltwos{x^\supt - x^\suptm}^2}{4\tau} + \frac{\ltwos{\yvec^\supt - \yhat}^2}{4\sigma},
\end{align*}
where in the equality we used 
$n^2/\tau=(4/\alpha^2)\sigma n^2\barR^2 = (4/\alpha^2) \sigma\left(\sum_{i=1}^n\|a_i\|_2\right)^2$.
This implies
\[
	\frac{(\yvec^\supt - \yhat)^T A (x^\supt - x^\suptm)}{n} 
	~\geq~ -\frac{\ltwos{x^\supt - x^\suptm}^2}{4\tau} 
    - \frac{\ltwos{\yvec^\supt - \yhat}^2}{4\sigma}.
\]
Substituting the above inequality into 
inequality~\eqref{eqn:nonuniform-almost-there} completes the proof.


\section{Efficient update for $(\ell_1+\ell_2)$-norm penalty}
\label{sec:l1l2-update-detail}

From Section~\ref{sec:l1+l2-penalty}, 
we have the following recursive formula for $t\in [t_0+1, t_1]$, 
\begin{align}\label{eqn:l1l2-norm-x-update-zero}
x_j^\suptp &= \left\{ 
	\begin{array}{ll}
        \frac{1}{1 + \lambda_2 \tau} (x_j^\supt - \tau \uvec^{(t_0+1)} - \tau\lambda_1) & \mbox{if } x_j^\supt - \tau \uvec^{(t_0+1)}_j > \tau\lambda_1 ,\\
        \frac{1}{1 + \lambda_2 \tau} (x_j^\supt - \tau \uvec^{(t_0+1)} + \tau\lambda_1) & \mbox{if } x_j^\supt - \tau \uvec^{(t_0+1)}_j < -\tau\lambda_1 ,\\
		0 & \mbox{otherwise.}
	\end{array}
\right.
\end{align}
Given $x_j^{(t_0+1)}$ at iteration $t_0$, we present an efficient algorithm for calculating $x_j^{(t_1)}$. 
We begin by examining the sign of $x_j^{(t_0+1)}$.
\vspace{-10pt}

\paragraph{Case I ($x_j^{(t_0+1)} = 0$):} If $- \uvec_j^{(t_0+1)} > \lambda_1$, then equation~\eqref{eqn:l1l2-norm-x-update-zero}
implies $x_j^{(t)} > 0$ for all $t > t_0+1$. Consequently, we have a closed-form formula for $x_j^{(t_1)}$:
\begin{align}\label{eqn:l1l2-case1-1}
	x^{(t_1)}_j = \frac{1}{(1+\lambda_2 \tau)^{t_1-t_0-1}} \Big( x^{(t_0+1)}_j + \frac{\uvec_j^{(t_0+1)}+\lambda_1}{\lambda_2}\Big) - \frac{\uvec_j^{(t_0+1)}+\lambda_1}{\lambda_2}.
\end{align}
If $- \uvec_j^{(t_0+1)} < -\lambda_1$, then equation~\eqref{eqn:l1l2-norm-x-update-zero}
implies $x_j^{(t)} < 0$ for all $t > t_0+1$. 
Therefore, we have the closed-form formula:
\begin{align}\label{eqn:l1l2-case1-2}
	x^{(t_1)}_j = \frac{1}{(1+\lambda_2 \tau)^{t_1-t_0-1}} \Big( x^{(t_0+1)}_j + \frac{\uvec_j^{(t_0+1)}-\lambda_1}{\lambda_2}\Big) - \frac{\uvec_j^{(t_0+1)}-\lambda_1}{\lambda_2}.
\end{align}
Finally, if $- \uvec_j^{(t_0+1)} \in [-\lambda_1,\lambda_1]$, then equation~\eqref{eqn:l1l2-norm-x-update-zero} implies $x^{(t_1)}_j = 0$.

\paragraph{Case II ($x_j^{(t_0+1)} > 0$):} If $- \uvec_j^{(t_0+1)} \geq \lambda_1$, then it is easy to verify that 
$x_j^{(t_1)}$ is obtained by equation~\eqref{eqn:l1l2-case1-1}. Otherwise,
We use the recursive formula~\eqref{eqn:l1l2-norm-x-update-zero} to derive the latest time $t^+ \in [t_0+1,t_1]$
such that $x_j^{t^+}>0$ is true. Indeed, since $x_j^{(t)} > 0$ for all $t\in [t_0+1,t^+]$, we have a closed-form formula for $x_j^{t^+}$:
\begin{align}\label{eqn:l1l2-case2-1}
	x^{t^+}_j = \frac{1}{(1+\lambda_2 \tau)^{t^+-t_0-1}} \Big( x^{(t_0+1)}_j + \frac{\uvec_j^{(t_0+1)} + \lambda_1}{\lambda_2}\Big) - \frac{\uvec_j^{(t_0+1)}+\lambda_1}{\lambda_2}.
\end{align}
We look for the largest $t^+$ such that the right-hand  side of equation~\eqref{eqn:l1l2-case2-1}
is positive, which is equivalent of 
\begin{align}\label{eqn:l1l2-case2-2}
	t^+ - t_0 - 1 < {\log\Big( 1 + \frac{\lambda_2 x^{(t_0+1)}_j }{\uvec_j^{(t_0+1)} + \lambda_1} \Big)} / {\log(1 + \lambda_2 \tau)}.
\end{align}
Thus, $t^+$ is the largest integer in $[t_0+1,t_1]$ such that inequality~\eqref{eqn:l1l2-case2-2} holds.
If $t^+ = t_1$, then $x_j^{(t_1)}$ is obtained by~\eqref{eqn:l1l2-case2-1}.
Otherwise, we can calculate $x_j^{t^+ + 1}$ by formula~\eqref{eqn:l1l2-norm-x-update-zero},
then resort to Case I or Case III, treating $t^+$ as $t_0$.

\paragraph{Case III ($x_j^{(t_0+1)} < 0$):} If $- \uvec_j^{(t_0+1)} \leq -\lambda_1$, then
$x_j^{(t_1)}$ is obtained by equation~\eqref{eqn:l1l2-case1-2}. Otherwise, we calculate the largest
integer $t^-\in [t_0+1,t_1]$ such that $x_j^{t^-} < 0$ is true. Using the same argument as for Case II, we have
the closed-form expression
\begin{align}\label{eqn:l1l2-case3-1}
	x^{t^-}_j = \frac{1}{(1+\lambda_2 \tau)^{t^- -t_0-1}} \Big( x^{(t_0+1)}_j + \frac{\uvec_j^{(t_0+1)} - \lambda_1}{\lambda_2}\Big) - \frac{\uvec_j^{(t_0+1)} - \lambda_1}{\lambda_2}.
\end{align}
where $t^-$ is the largest integer in $[t_0+1,t_1]$ such that the following
inequality holds:
\begin{align}\label{eqn:l1l2-case3-2}
	t^- - t_0 - 1 < {\log\Big( 1 + \frac{\lambda_2 x^{(t_0+1)}_j }{\uvec_j^{(t_0+1)} - \lambda_1} \Big)} / {\log(1 + \lambda_2 \tau)}.
\end{align}
If $t^- = t_1$, then $x_j^{(t_1)}$ is obtained by~\eqref{eqn:l1l2-case3-1}.
Otherwise, we can calculate $x_j^{t^- + 1}$ by formula~\eqref{eqn:l1l2-norm-x-update-zero},
then resort to Case I or Case II, treating $t^-$ as $t_0$.

\bigskip

Finally, we note that formula~\eqref{eqn:l1l2-norm-x-update-zero} implies the monotonicity of $x_j^{(t)}~(t=t_0+1,t_0+2,\dots)$.
As a consequence, the procedure of either Case I, Case II or Case III is executed for at most once.
Hence, the algorithm for calculating $x_j^{(t_1)}$ has $\order(1)$ time complexity.

\bibliographystyle{abbrv}
\bibliography{spdc_paper2}

\end{document}